\numberwithin{equation}{section}
\numberwithin{figure}{section}
\theoremstyle{plain}
\newtheorem{thm}{\protect\theoremname}[section]
\theoremstyle{remark}
\newtheorem{rem}[thm]{\protect\remarkname}
\theoremstyle{plain}
\newtheorem{lem}[thm]{\protect\lemmaname}
\theoremstyle{definition}
\newtheorem{defn}[thm]{\protect\definitionname}
\theoremstyle{plain}
\newtheorem{prop}[thm]{\protect\propositionname}
\theoremstyle{plain}
\newtheorem{cor}[thm]{\protect\corollaryname}
\numberwithin{equation}{section}
\providecommand{\corollaryname}{Corollary}
\providecommand{\definitionname}{Definition}
\providecommand{\lemmaname}{Lemma}
\providecommand{\propositionname}{Proposition}
\providecommand{\remarkname}{Remark}
\providecommand{\theoremname}{Theorem}
\begin{document}

\title[On Landis Conjecture for the Fractional Schr\"{o}dinger Equation]{On Landis Conjecture for the Fractional Schr\"{o}dinger Equation}

\author{Pu-Zhao Kow}

\address{Department of Mathematics and Statistics, P.O. Box 35 (MaD), FI-40014 University of Jyv\"{a}skyl\"{a}, Finland.}

\email{\href{mailto:pu-zhao.pz.kow@jyu.fi}{pu-zhao.pz.kow@jyu.fi}}
\begin{abstract}
In this paper, we study a Landis-type conjecture for the general fractional Schr\"{o}dinger equation $((-P)^{s}+q)u=0$. As a byproduct, we also proved the additivity and boundedness of the linear operator $(-P)^{s}$ for non-smooth coefficents. For differentiable potentials $q$, if a solution decays at a rate $\exp(-|x|^{1+})$, then the solution vanishes identically. For non-differentiable potentials $q$, if a solution decays at a rate $\exp(-|x|^{\frac{4s}{4s-1}+})$, then the solution must again be trivial. The proof relies on delicate Carleman estimates. This study is an extension of the work by R\"{u}land-Wang (2019).
\end{abstract}

\subjclass[2020]{35R11; 35A02; 35B60. }
\keywords{Landis conjecture; unique continuation at infinity; fractional Schr\"{o}dinger equation; Carleman-type estimates. }
\maketitle

\maketitle

\section{Introduction}

In this work, we study a Landis-type conjecture for the fractional Schr\"{o}dinger equation 
\begin{equation}
((-P)^{s}+q)u=0\quad\text{in }\mathbb{R}^{n}, \quad \text{where} \quad P=\sum_{j,k=1}^{n}\partial_{j}a_{jk}(x)\partial_{k} \label{eq:Sch}
\end{equation}
with $s\in(0,1)$ and $|q(x)|\le1$. Here, the operator $(-P)^s$ is defined as 
\begin{equation}
(-P)^{s}u(x) := \int_{0}^{\infty} \lambda^{s} \, \mathsf{d}E_{\lambda} = \frac{1}{\Gamma(-s)} \int_{0}^{\infty}(e^{tP}-1)u(x)\,\frac{\mathsf{d}t}{t^{1+s}}  \label{eq:definition-fractional}
\end{equation}
for all
\begin{equation*}
u \in {\rm dom}\,((-P)^{s}) := \bigg\{ u \in L^{2}(\mathbb{R}^{n}) : \int_{0}^{\infty} \lambda^{2s} \, \mathsf{d} \| E_{\lambda}u \|^{2} < \infty \bigg\}
\end{equation*}
where $\{E_{\lambda}\}$ is the spectral resolution of $-P$ (each $\{E_{\lambda}\}$ is a projection in $L^{2}(\mathbb{R}^{n})$) and  $\{e^{tP}\}_{t\ge0}$ is the heat-diffusion semigroup generated by $-P$, see e.g. \cite{GLX17FractionalCalderon,ST10ExtensionProblem}. 

The Landis conjecture was proposed by E.M. Landis in the 60's \cite{KL88}. He conjectured the following statement: Let $|q(x)|\le1$ and let $u$ be a solution to  \eqref{eq:Sch} with $P=\Delta$ and $s=1$. If $|u(x)|\le C_{0}$ and $|u(x)|\le\exp(-C|x|^{1+})$, then $u\equiv0$. However, this statement is false: In \cite{Mes92Landis}, Meshkov constructed a (complex-valued) potential $q$ and a (complex-valued) nontrivial $u$ with $|u(x)|\le C\exp(-C|x|^{\frac{4}{3}})$. In the same literature, he also showed that if $|u(x)|\le C\exp(-C|x|^{\frac{4}{3}+})$, then $u\equiv0$. In other words, the exponent $\frac{4}{3}+$ is optimal. In \cite{BK05quantitativeLandis}, Bourgain and Kenig derived a quantitative form of Meshkov's result, which is based on the Carleman method; their result then extended by Davey in \cite{Dav14MagneticSch}, including the drift term. Following, in \cite{LW14Landis}, Lin and Wang further extend Davey's result by replacing $\Delta$ by $P$. 

The results mentioned above allowing \emph{complex-valued} solutions. It is also interesting to study the real-version of Landis conjecture, which proposed by Kenig in \cite[Question~1]{Ken06RealLandis}. The case when $n=1$ and $n=2$ were resolved in \cite{Ros21realLandis} and \cite{LMNN20realLandis}, respectively. To the best of the author's knowledge, the real-version of Landis conjecture is still open for $n \ge 3$. Here we also refer some related works \cite{Dav20realLandis,DKW17realLandis,DKW20realLandis,DW20LandisConjecturePlane,KSW15realLandis}.

In \cite{RW19Landis}, R\"{u}land and Wang consider the Landis conjecture of the fractional Schr\"{o}dinger equation \eqref{eq:Sch} with $P=\Delta$ and $0<s<1$. For the case when $s=1/2$, in \cite{Cas20SharpExponentialDecay}, we remark that Cassano proved the Landis conjecture for the Dirac equation. In some sense, the Dirac operator is the square root of the Laplacian operator, that is, the phenomena are similar when $s=1/2$. 

\subsection{Main results}

We assume that the second order elliptic operator $P$ satisfies the elliptic condition 
\begin{equation}
	\lambda|\xi|^{2}\le\sum_{j,k=1}^{n}a_{jk}(x)\xi_j\xi_{k}\le\lambda^{-1}|\xi|^{2}\quad\text{for some constant }0<\lambda\le 1.\label{eq:unif-ellip}
\end{equation}
Assume that $a_{jk}=a_{kj} \in \mathcal{C}^{0,1}(\mathbb{R}^{n})$ for all $1\le j,k \le n$, and satisfy 
\begin{equation}
	\max_{1\le j,k\le n}\sup_{|x|\ge1}|a_{jk}(x)-\delta_{jk}(x)|+\max_{1\le j,k\le n}\sup_{|x|\ge1}|x||\nabla a_{jk}(x)|\le\epsilon\label{eq:decay}
\end{equation}
for some sufficiently small $\epsilon>0$ and
\begin{equation}
	\max_{1\le j,k\le n}\sup_{|x|\ge1}|\nabla^{2}a_{jk}(x)|\le C\label{eq:2der-bdd}
\end{equation}
for some positive constant $C$. 

In this paper, we prove the following Landis-type conjecture for the fractional Schr\"{o}dinger equations. 
\begin{thm}
\label{thm:result1}Let $s\in(0,1)$ and assume that $u\in {\rm dom}\,((-P)^{s})$ is a solution to \eqref{eq:Sch} with \eqref{eq:unif-ellip}, \eqref{eq:decay} and \eqref{eq:2der-bdd}. We assume that the potential $q\in\mathcal{C}^{1}(\mathbb{R}^{n})$
satisfies $|q(x)|\le1$ and

\[
|x||\nabla q(x)|\le1.
\]
If $u$ further satisfies 
\[
\int_{\mathbb{R}^{n}}e^{|x|^{\alpha}}|u|^{2}\,\mathsf{d}x\le C<\infty\quad\text{for some }\alpha>1,
\]
then $u\equiv0$. 
\end{thm}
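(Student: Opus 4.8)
The plan is to follow the Caffarelli–Silvestre extension strategy that underlies \cite{RW19}, but with the variable-coefficient operator $P$ in place of $\Delta$. First I would extend $u$ to a function $\tilde u(x,y)$ on the upper half-space $\mathbb{R}^{n}_{+}=\mathbb{R}^{n}\times(0,\infty)$ solving the degenerate elliptic equation $\nabla\cdot(y^{1-2s}\,\mathbf{A}(x)\nabla\tilde u)=0$ in $\mathbb{R}^{n+1}_{+}$, where $\mathbf{A}$ is the block matrix built from $(a_{jk})$ (and the identity in the $y$-slot), with the Neumann-type condition $-\lim_{y\to 0^+} y^{1-2s}\partial_y\tilde u = c_s (-P)^s u = -c_s q u$ on $\{y=0\}$. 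By the additivity and boundedness of $(-P)^s$ established earlier in the paper, together with $u\in H^s(\mathbb{R}^n)$, this extension exists, has finite weighted Dirichlet energy, and the trace relation holds in the appropriate weak sense. The decay hypothesis $\int e^{|x|^\alpha}|u|^2\,dx<\infty$ must then be propagated to a quantitative decay statement for $\tilde u$ on bounded $y$-slabs; this is done via Caccioppoli/energy estimates for the extension, using that the Poisson kernel for the extension has Gaussian-type decay in $x$ (inherited from the heat kernel bounds for $e^{tP}$, which hold because $P$ is uniformly elliptic with Lipschitz coefficients).

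Next I would set up the Carleman estimate. The key analytic input is a Carleman inequality with a radial weight $\phi(|x|,y)$ (polynomial-type near the relevant scale, of the form used by Bourgain–Kenig and Rüland–Wang) for the degenerate operator $\nabla\cdot(y^{1-2s}\mathbf{A}\nabla\cdot)$ in $\mathbb{R}^{n+1}_+$, incorporating the boundary term coming from the Neumann condition. Because the coefficients $a_{jk}$ are only Lipschitz and satisfy the decay \eqref{eq:decay} toward $\delta_{jk}$ at infinity with $|x||\nabla a_{jk}|\le\epsilon$, the first-order commutator terms generated by the weight can be absorbed into the main (gradient) term for $|x|$ large, provided $\epsilon$ is small enough — this is exactly why the smallness in \eqref{eq:decay} is imposed, and why \eqref{eq:2der-bdd} is needed to control the second-order terms appearing in the conjugated operator. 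The boundary contribution is handled by a trace/interpolation inequality on $\{y=0\}$ that trades $\|u\|$ against $\|\tilde u\|$ and $\|y^{(1-2s)/2}\nabla\tilde u\|$ on thin slabs, as in \cite{RW19}.

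With the Carleman estimate in hand, the argument is the standard three-ball / propagation-of-smallness-to-infinity scheme. One applies the estimate to $\eta\tilde u$, where $\eta$ is a cutoff that is $1$ on an annular region $R\le|x|\le 2R$ (and on a slab $0<y<1$) and supported on a slightly larger region; the commutator $[\nabla\cdot(y^{1-2s}\mathbf{A}\nabla\cdot),\eta]$ is supported on the overlap regions and is estimated using the already-derived weighted energy bounds. The differentiable-potential hypothesis $|x||\nabla q(x)|\le 1$ is used precisely here: in order to keep the first-order term $quy$-contribution from the boundary on the good side of the inequality one integrates by parts once more on $\{y=0\}$, which produces $\nabla q$ and hence requires the $|x||\nabla q|\le 1$ bound (this is the difference between the $\exp(-|x|^{1+})$ threshold here and the $\exp(-|x|^{4s/(4s-1)+})$ threshold for merely bounded $q$). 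Combining the Carleman inequality with the assumed super-exponential decay $\int e^{|x|^\alpha}|u|^2\,dx<\infty$ for some $\alpha>1$, one lets the large Carleman parameter $\tau\to\infty$: the decay beats the weight growth, forcing the left-hand side (which controls $\|\tilde u\|$ on a fixed annulus and slab) to vanish. Hence $\tilde u\equiv 0$ on an open set, and by weak unique continuation for the degenerate elliptic extension equation (valid for Lipschitz $\mathbf{A}$) together with the trace relation, $u\equiv 0$ in $\mathbb{R}^n$.

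I expect the main obstacle to be the Carleman estimate for the degenerate operator with only Lipschitz variable coefficients: one must choose the weight so that, after conjugation, the bad terms from $\nabla a_{jk}$ and from the $y^{1-2s}$ degeneracy (near $y=0$) are simultaneously controlled, and the boundary term matches the potential term $qu$ — all uniformly in the large parameter. The secondary difficulty is the bookkeeping that transfers the $L^2$-with-weight decay of $u$ on $\mathbb{R}^n$ into usable decay for $\tilde u$ on slabs, which relies on quantitative Gaussian bounds for the extension/Poisson kernel built from $e^{tP}$.
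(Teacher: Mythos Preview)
Your proposal is correct and follows essentially the same route as the paper: localize via the Caffarelli--Silvestre extension, transfer the boundary decay of $u$ to bulk decay of $\tilde u$, apply a Carleman estimate with radial weight $\phi=|x|^\beta$ (for some $1<\beta<\alpha$) to a radial cutoff $\eta_R\tilde u$, absorb the boundary contributions by trace interpolation using $|q|\le 1$ and $|x'||\nabla' q|\le 1$, send $R\to\infty$ and then $\tau\to\infty$, and conclude by weak unique continuation. One small caveat: the paper carries out the boundary-to-bulk decay step (Proposition~\ref{prop:bulk-decay}) via an auxiliary Carleman estimate plus three-ball inequalities rather than Poisson-kernel bounds---note that the extension's Poisson kernel has only \emph{algebraic} decay in $x'$, not Gaussian, so your heuristic would have to be routed through the Stinga--Torrea heat-semigroup representation (and the Gaussian bounds for $e^{tP}$) to be made rigorous.
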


We also have the following result for non-differentiable potential
$q$. 
\begin{thm}
\label{thm:result2}Let $s\in(1/4,1)$ and assume that $u\in {\rm dom}\,((-P)^{s})$
is a solution to \eqref{eq:Sch} with \eqref{eq:unif-ellip}, \eqref{eq:decay}, and \eqref{eq:2der-bdd}. Now we assume that the potential $q$ satisfies
$|q(x)|\le1$. If $u$ satisfies 

\[
\int_{\mathbb{R}^{n}}e^{|x|^{\alpha}}|u|^{2}\,\mathsf{d}x\le C<\infty\quad\text{for some }\alpha>\frac{4s}{4s-1},
\]
then $u\equiv0$. 
\end{thm}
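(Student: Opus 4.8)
The plan is to follow the Caffarelli--Silvestre extension strategy combined with Carleman estimates, which is the standard route for nonlocal equations and the method used by Rüland--Wang. First I would realize the fractional operator $(-P)^s$ via a degenerate elliptic extension: one solves $\mathrm{div}_{x,y}(y^{1-2s} \tilde A \nabla_{x,y} \tilde u) = 0$ in the half-space $\mathbb{R}^{n+1}_+ = \{(x,y): y>0\}$ with $\tilde u(x,0) = u(x)$, where $\tilde A$ is built from the coefficients $a_{jk}$ (this uses the additivity and boundedness of $(-P)^s$ for non-smooth coefficients that the abstract promises is established earlier). The equation $((-P)^s + q)u = 0$ then becomes the Neumann-type boundary condition $\lim_{y\to 0^+} y^{1-2s}\partial_y \tilde u = c_s\, q(x) u(x)$ on $\{y = 0\}$. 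The weighted $L^2$ decay hypothesis on $u$ must be propagated to an analogous decay estimate for the extension $\tilde u$ on the half-space; this is a Caccioppoli-type argument, using the equation and interior/boundary Carleman or energy estimates to control $\tilde u$ in a region $\{|x| \sim R, 0 < y < R\}$ by the data near $\{|x| \sim R\}$.

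Second, the heart of the matter is a Carleman estimate with an exponential weight $e^{\tau \phi}$ adapted to the degenerate operator $L_s = \mathrm{div}(y^{1-2s}\tilde A \nabla \cdot)$, of the form
\[
\tau^3 \int y^{1-2s} e^{2\tau \phi} |v|^2 + \tau \int y^{1-2s} e^{2\tau\phi}|\nabla v|^2 \;\lesssim\; \int y^{1-2s} e^{2\tau\phi} |L_s v|^2 + (\text{boundary terms on } y=0),
\]
valid for $v$ supported in a suitable annular region in $x$, where $\phi = \phi(x)$ is a convexified radial weight like $\phi(x) = -|x|^\beta$ or a logarithmic variant, and the boundary terms carry the factor $\lim y^{1-2s}\partial_y v$. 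For the non-differentiable potential case (Theorem \ref{thm:result2}) one cannot absorb $q u$ by a gradient term, so the weight and the trace interpolation must be tuned so that the boundary contribution $\int_{\mathbb{R}^n} e^{2\tau\phi}|qu|^2$ is controlled by the bulk terms with a loss; a trace inequality of the form $\|e^{\tau\phi} w\|_{L^2(\{y=0\})}^2 \lesssim \tau^{1/2}\|y^{1-2s}e^{2\tau\phi}|\nabla w|^2\|^{?} + \dots$ introduces exactly the exponent $\tfrac{4s}{4s-1}$ after optimizing: the mismatch between the $\tau^3$ bulk gain and the weaker $\tau$-power available from the boundary trace forces $\beta$ into the range $1 < \beta < \tfrac{\alpha}{\alpha-1}$ only when $\alpha > \tfrac{4s}{4s-1}$. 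I would derive the Carleman estimate by conjugation $L_\tau = e^{\tau\phi} L_s e^{-\tau\phi}$, splitting into symmetric and antisymmetric parts, integrating by parts (Rellich--Pohozaev-type identity in the $(x,y)$ variables with the weight $y^{1-2s}$), and choosing $\phi$ pseudoconvex; the coefficient perturbation bounds \eqref{eq:decay} and \eqref{eq:2der-bdd} are used precisely to absorb the error terms coming from $\tilde A - \mathrm{Id}$ into the main terms for $|x|$ large.

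Third, with the Carleman estimate in hand, the vanishing argument is standard: insert $v = \eta \tilde u$ where $\eta$ is a cutoff equal to $1$ on $\{R_0 < |x| < R\}$ and supported in $\{R_0/2 < |x| < 2R\}$. The commutator $[L_s,\eta]\tilde u$ is supported on the two shells $|x|\sim R_0$ and $|x|\sim R$; the $R_0$-shell contributes a fixed constant, while the $R$-shell contribution is killed by the weighted decay of $\tilde u$ (from step one) against the weight $e^{2\tau\phi}$ since $\phi$ is decreasing and $\beta < \tfrac{\alpha}{\alpha-1}$. Sending $R \to \infty$ and then $\tau \to \infty$ shows $\tilde u \equiv 0$ on $\{|x| > R_1\}\times\{y > 0\}$ for some $R_1$, hence $u \equiv 0$ for $|x| > R_1$; a weak unique continuation result for the extension (or the strong UCP for $(-P)^s$, e.g. via the Almgren frequency function / the Carleman estimate again in a compact region) then propagates this to $u \equiv 0$ everywhere. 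For Theorem \ref{thm:result1} with $q \in \mathcal C^1$ and $|x||\nabla q|\le 1$, one can instead absorb the potential into a first-order term after differentiating, recovering the full range $\alpha > 1$; the same scheme applies with a milder Carleman estimate where the boundary term only needs the $\tau^{1/2}$-trace bound against the gradient rather than against $v$ itself.

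The main obstacle I expect is the degenerate-weight Carleman estimate with sharp $\tau$-powers together with the boundary term handling: one must simultaneously (i) obtain the $\tau^3$ bulk gain uniformly as $y\to 0$ despite the $A_2$-weight $y^{1-2s}$ degenerating/blowing up, (ii) control the trace of the weighted solution on $\{y=0\}$ by a fractional power of $\tau$ with the \emph{right} exponent so the potential term is absorbable, and (iii) ensure all coefficient-perturbation errors from \eqref{eq:decay}--\eqref{eq:2der-bdd} are lower order. Balancing (i) and (ii) is exactly what produces the threshold $\tfrac{4s}{4s-1}$, and getting the constants and $\tau$-powers to line up cleanly — rather than the naive exponent — is the delicate part of the argument.
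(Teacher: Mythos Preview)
Your overall architecture matches the paper exactly: Caffarelli--Silvestre extension, propagation of boundary decay to bulk decay for $\tilde u$, a Carleman estimate on $\mathbb{R}^{n+1}_+$ with weight $\phi(x)=|x|^\beta$, then the cutoff/limit argument followed by weak unique continuation. The gap is in how you obtain the Carleman estimate for \emph{non-differentiable} $V=q$. Your plan is to run a single Carleman estimate and then ``tune the trace interpolation'' to absorb the boundary term; but in the conjugated commutator computation the integration by parts on the boundary produces terms involving $\nabla' V$, which you cannot control when $q$ is merely bounded. This is why the paper's Theorem~\ref{thm:Carl-diff} requires $V\in\mathcal C^1$.

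The mechanism the paper (following R\"uland--Wang) actually uses is a \emph{splitting argument} inside the Carleman proof. After passing to conformal coordinates $x=e^t\omega$ and conjugating, one decomposes $\overline u=u_1+u_2$: the piece $u_1$ solves an auxiliary equation with an added damping term $-K^2\tau^2|\varphi'|^2\omega_{n+1}^{1-2s}$ and carries the full (rough) Neumann data, while $u_2$ has \emph{zero} Neumann data. For $u_1$ one obtains a purely elliptic estimate by testing against $\tau^2 e^{2\tau\varphi}|\varphi''|^2 u_1$ (the damping makes this coercive for large $K$), and the resulting boundary contribution is controlled by the spherical trace inequality (Proposition~\ref{prop:inter1}) with parameter $\tilde\tau=\tau e^t$, which is what yields the precise factor $\tau^{2-2s}$ on $\|e^{\tau\phi}V|x|^{(1-\beta)s}\tilde u\|_{L^2(\mathbb{R}^n\times\{0\})}^2$. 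For $u_2$ one can safely apply the Carleman machinery of Theorem~\ref{thm:Carl-diff} with $V\equiv 0$. Combining the two gives Theorem~\ref{thm:Carl-ndiff}, and the balance between that $\tau^{2-2s}$ boundary power and the $\tau^{2s+1}$ gain from the trace estimate in the absorption step is exactly what forces $\alpha>\tfrac{4s}{4s-1}$ (and, via the subelliptic estimate for $u_2$, the restriction $s>\tfrac14$). A minor correction: the Carleman weight is $\phi=|x|^\beta$ (increasing), not $-|x|^\beta$; the far shell $|x|\sim R$ is killed because the bulk decay $e^{-C|x|^\alpha}$ beats $e^{\tau|x|^\beta}$ when $\beta<\alpha$.
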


\begin{rem}
When $s=\frac 12$, Theorem \ref{thm:result1} and Theorem
\ref{thm:result2} still hold without \eqref{eq:2der-bdd}.
\end{rem}

\begin{rem}
We prove Theorem~\ref{thm:result2} using the splitting arguments in \cite{RW19Landis}. Therefore, due to the sub-ellipticity nature, we the same restriction $s\in(\frac{1}{4},1)$. We also see that, as $s\rightarrow1$, the exponent $\frac{4s}{4s-1}$ in Theorem~\ref{thm:result2} tends to $\frac{4}{3}$, which is the optimal exponent for the classical Schr\"{o}dinger equation.
\end{rem}

\begin{rem}
	The condition \eqref{eq:decay} allows small perturbations of Laplacian only, which works as a sufficient condition in deriving Carleman estimate. In \cite{GR19unique}, they also imposed similar assumption to prove the strong unique continuation property for \eqref{eq:Sch}. In contrast to the works \cite{DKW17realLandis,Ros21realLandis}, which studied the \emph{real-version} of Landis conjecture, such condition is not needed, since their proofs did not involve any Carleman estimate.
\end{rem}

\subsection{Main ideas}

The main method of proving Theorem~\ref{thm:result1} and \ref{thm:result2} is Carleman estimates. However, due to the non-locality of $(-P)^s$, the techniques here are much complicated than those for the classical case, i.e., $s=1$. One of the major tricks is to localize $(-P)^s$, which is motivated by Caffarelli-Silvestre's fundamental work \cite{CS07extension}. Here we will use the Caffarelli-Silvestre type extension of $(-P)^s$ proved in \cite{ST10ExtensionProblem,Sti10Fractional}. After localizing $(-P)^s$, we will derive a Carleman estimate on $\mathbb{R}_{+}^{n+1}$ mimicking the one proved in \cite{RS20Calderon}. This Carleman estimate enables passing of the boundary decay to the bulk decay. 

\subsection{\label{sec:regularity}Main difficulties: Regularity of $(-P)^{s}$}

Using the Fourier transform, it is easy to see that
\[
(-\Delta)^{\alpha}(-\Delta)^{\beta}=(-\Delta)^{\alpha+\beta} \quad \text{and} \quad (-\Delta)^{s} \in \mathcal{L}(\dot{H}^{\beta+s}(\mathbb{R}^{n}),\dot{H}^{\beta-s}(\mathbb{R}^{n})).
\]
However, extension of these properties to $(-P)^{s}$ is not trivial. We establish the additivity property of $(-P)^{s}$ by introducing the Balakrishnan definition of $(-P)^{s}$, which is equivalent to \eqref{eq:definition-fractional}, see e.g. \cite{MS01FractionalOperators} or \cite[Section IX.11]{Yos80FunctionalAnalysis}. The continuity of $(-P)^{s}:H^{2s}(\mathbb{R}^{n})\rightarrow L^{2}(\mathbb{R}^{n})$ can be also obtained by the Balakrishnan operator, as well as the interpolation of the single operator $-P$. Here, we shall not interpolate on the family of the operator $(-P)^{s}$, see also \cite{GM14AnalyticFamiliesMultilinear} for the interpolation theory of the analytic familiy of multilinear operators. 

\begin{rem}
In \cite{See67ComplexPowerEllipticOperator}, R.T. Seeley showed that the operator $(-P)^{s}$ is a pseudo-differential operator of order $2s$ if $a_{jk}$ are smooth. In this case, we can apply the theory of pseudo-differential operator, see e.g. \cite{Tay74PseudoDifferentialOperator}. As a byproduct, we loosened the smoothness hypothesis that required by theories of the pseudo-differential operator. Moreover, the boundary value theories for the fractional Laplacian have been elaborated in recent years, see e.g. \cite{Gru14MuTransmissionFractionalElliptic,Gru15FractionalLaplacianDomains,Gru16IntegrationByPartsPohozaev,Gru16RegularitySpectralFractional,Gru20ExactGreenFormulaFractionalLaplacian}. In \cite{Gru20ExactGreenFormulaFractionalLaplacian}, Grubb calculated  the first few terms in the symbol of $(-P)^{s}$.\footnote{I would like to thank Prof Gerd Grubb for bringing these issues to my attention and for pointing out several related references.} 
\end{rem}

\subsection{Main difficulties: Carleman estimates}

In \cite{RW19Landis}, they proved their Carleman estimates by estimating a certain commutator term, see \cite[(31)--(33)]{RW19Landis}. In our case, we shall approximate $P$ by $\Delta$. However, we face difficulties while controlling the remainder terms. Here, we solve this problem using the ideas in \cite{Reg97StrongUniquenessSecondOrderElliptic}. It is also interesting to mention that the terms of second derivative in the Carleman estimate should be $\tilde{\nabla}(\nabla\tilde{u})$ rather than $\tilde{\nabla}^{2}\tilde{u}$, where $\tilde{\nabla}=(\nabla,\partial_{n+1})$ is the gradient operator on $\mathbb{R}^{n+1}$, and $\tilde{u}$ is the Caffarelli-Silvestre type extension of $u$.

\subsection{Organization of the paper}

In Section~\ref{sec:definition}, we localize the operator $(-P)^{s}$ and solve the problems described in Paragraph~\ref{sec:regularity}. Following, in Section~\ref{sec:decay}, we show that the decay of $u$ implies the decay of the Caffarelli-Silvestre type extension $\tilde{u}$ of $u$. Then, we derive some delicate Carleman estimates on $\mathbb{R}_{+}^{n}$ in Section~\ref{sec:Carleman}. Finally, we prove Theorem~\ref{thm:result1} and Theorem~\ref{thm:result2} in Section~\ref{sec:result}. 

\section{\label{sec:definition}Caffarelli-Silvestre type Extension}

Let $\mathbb{R}_{+}^{n+1}=\mathbb{R}^{n}\times\mathbb{R}_{+}=\{(x',x_{n+1}):x_{n+1} > 0\}$, and we write $x=(x',x_{n+1})$ with $x'\in\mathbb{R}^{n}$ and $x_{n+1}\in\mathbb{R}_{+}$. We also denote $\nabla'=(\partial_{1},\cdots\partial_{n})$ and $\nabla = (\nabla' , \partial_{n+1})$. For $x_{0}\in\mathbb{R}^{n}\times\{0\}$, we denote the half balls in $\mathbb{R}_{+}^{n+1}$ and $\mathbb{R}^{n}\times\{0\}$ by 
\begin{align*}
B_{r}^{+}(x_{0})&:=\{x\in\mathbb{R}_{+}^{n+1}:|x-x_{0}|\le r\}, \\ 
B_{r}'(x_{0})&:=\{(x',0) \in \mathbb{R}^{n}\times\{0\} : |(x',0) - x_{0}| \le r\},
\end{align*}
$B_{r}^{+}(0)=B_{r}^{+}$, and $B_{r}'(0)=B_{r}'$. We define the annulus 
\begin{align*}
A_{r,R}^{+} &:= \{x\in\mathbb{R}_{+}^{n+1}:r\le|x|\le R\} \\ 
A_{r,R}' &:= \{(x',0)\in\mathbb{R}^{n}\times\{0\}:r\le|(x',0)|\le R\}.
\end{align*}
We consider the following Sobolev spaces: 
\begin{align*}
L^{2}(D,x_{n+1}^{1-2s}) & :=\bigg\{ v:D\rightarrow\mathbb{R}:\int_{D}x_{n+1}^{1-2s}|v|^{2}\,\mathsf{d}x<\infty\bigg\},\\
\dot{H}^{1}(D,x_{n+1}^{1-2s}) & :=\bigg\{ v:D\rightarrow\mathbb{R}:\int_{D}x_{n+1}^{1-2s}|\nabla v|^{2}\,\mathsf{d}x<\infty\bigg\},\\
H^{1}(D,x_{n+1}^{1-2s}) & :=\bigg\{ v:D\rightarrow\mathbb{R}:\int_{D}x_{n+1}^{1-2s}(|v|^{2}+|\nabla v|^{2})\,\mathsf{d}x<\infty\bigg\},
\end{align*}
where $D$ is a relative open set in $\overline{\mathbb{R}_{+}^{n+1}}$.

For $s\in(0,1)$, let $\tilde{u}$ be a solution to the following degenerate elliptic equation: 
\begin{align}
\bigg[\partial_{n+1}x_{n+1}^{1-2s}\partial_{n+1}+x_{n+1}^{1-2s}P\bigg]\tilde{u} & =0\quad\text{in }\mathbb{R}_{+}^{n+1},\label{eq:Sch-ext}\\
\tilde{u} & =u\quad\text{on }\mathbb{R}^{n}\times\{0\}.\label{eq:Dirichlet-BC}
\end{align}
Refer to \cite[equation~(1.8) in Theorem 1.1]{ST10ExtensionProblem}, the fractional elliptic operator $(-P)^{s}$ satisfies 
\begin{equation}
(-P)^{s}u(x')=c_{s}\lim_{x_{n+1}\rightarrow0}x_{n+1}^{1-2s}\partial_{n+1}\tilde{u}(x) \label{eq:Neumann-BC}
\end{equation}
with
\[
c_{s} = \frac{4^{s}\Gamma(s)}{2s\Gamma(-s)} < 0 \quad (\text{In particular }c_{1/2}=-1),
\]
see also \cite{Sti10Fractional}. 
The following lemma is a special case of \cite[Proposition~2.1]{GR19unique}: 
\begin{lem}
\label{lem:well0} Let $0<s<1$, and assuming that $a_{jk}=a_{kj}\in \mathcal{C}^{0,1}(\mathbb{R}^{n})$ satisfies the elliptic condition \eqref{eq:unif-ellip}. Then there exists an extension operator
\[
\mathsf{E}_{s}:{\rm dom}\,((-P)^{s}) \rightarrow 
H_{\rm loc}^{1}(\mathbb{R}_{+}^{n+1},x_{n+1}^{1-2s}) \cap \mathcal{C}_{\rm loc}^{2,1}(\mathbb{R}_{+}^{n+1})
\]
such that $\tilde{u} = \mathsf{E}_{s}(u)$ is a solution of \eqref{eq:Sch-ext} and the boundary conditions \eqref{eq:Dirichlet-BC} and \eqref{eq:Neumann-BC} are attained as $L^{2}(\mathbb{R}^{n})$-limits. 
\end{lem}
The proof of Lemma~\ref{lem:well0} is same as in \cite{ST10ExtensionProblem,Sti10Fractional}. The following estimate also holds true: 
\begin{equation}
	\|\tilde{u}(\bullet,x_{n+1})\|_{L^{2}(\mathbb{R}^{n})}\le\|u\|_{L^{2}(\mathbb{R}^{n})} \quad \text{for all}\;\; x_{n+1}>0.\label{eq:apriori}
\end{equation}
with $\tilde{u} = \mathsf{E}_{s}(u)$, see \cite[page~2097]{ST10ExtensionProblem} or \cite[pages~48--49]{Sti10Fractional}. From \cite[Proposition~2.6]{Yu17unique}, indeed 
\begin{equation}
\mathsf{E}_{s} : H^{s}(\mathbb{R}^{n}) \rightarrow H_{\rm loc}^{1}(\mathbb{R}_{+}^{n+1},x_{n+1}^{1-2s}) \label{eq:extension-domain}
\end{equation}
is a bounded linear operator. Using \cite[Remark~7.4]{LM72FractionalSobolevSpacesVol1}, we know that  
\begin{equation*}
	\mathcal{C}_{c}^{\infty}(\overline{\mathbb{R}_{+}^{n+1}}) \text{ is dense in } H_{\rm loc}^{1}(\mathbb{R}_{+}^{n+1},x_{n+1}^{1-2s}),
\end{equation*}
thus, given any $v \in H^{s}(\mathbb{R}^{n})$, we have $\tilde{v} = \mathsf{E}_{s}(v) \in H_{\rm loc}^{1}(\mathbb{R}_{+}^{n+1},x_{n+1}^{1-2s})$ and 
\begin{align*}
& \bigg| \int_{\mathbb{R}^{n} \times \{0\}} ((-P)^{s}u) v \, \mathsf{d}x' \bigg| \equiv \bigg| \int_{\mathbb{R}^{n} \times \{0\}} \bigg( \lim_{x_{n+1} \rightarrow 0}x_{n+1}^{1-2s} \partial_{n+1}\tilde{u}\bigg) v \, \mathsf{d}x' \bigg|\\ 
= & \bigg| \int_{\mathbb{R}_{+}^{n+1}} x_{n+1}^{1-2s} \partial_{n+1}^{1-2s} \partial_{n+1} \tilde{u} \partial_{n+1} \tilde{v} \, \mathsf{d}x + \int_{\mathbb{R}_{+}^{n+1}} A(x') \nabla' \tilde{u} \cdot \nabla' \tilde{v} \, \mathsf{d}x \bigg| \\
\le & \lambda^{-1} \| \nabla \tilde{u} \|_{L^{2}(\mathbb{R}_{+}^{n+1},x_{n+1}^{1-2s})} \| \nabla \tilde{v} \|_{L^{2}(\mathbb{R}_{+}^{n+1},x_{n+1}^{1-2s})} \\
\equiv & \lambda^{-1} \| \mathsf{E}_{s}(u) \|_{\dot{H}^{1}(\mathbb{R}_{+}^{n+1},x_{n+1}^{1-2s})} \| \mathsf{E}_{s}(v) \|_{\dot{H}^{1}(\mathbb{R}_{+}^{n+1},x_{n+1}^{1-2s})} \\ 
\le & C \| u \|_{H^{s}(\mathbb{R}^{n})} \| v \|_{H^{s}(\mathbb{R}^{n})} \quad \text{using \eqref{eq:extension-domain}}.
\end{align*}
Therefore, by arbitrariness of $v \in H^{s}(\mathbb{R}^{n})$, we conclude the following lemma: 
\begin{lem}
\label{lem:well1} Let $0<s<1$ and $a_{jk}$ given as in Lemma~{\rm \ref{lem:well0}}. Then $(-P)^{s} : H^{s}(\mathbb{R}^{n}) \rightarrow H^{-s}(\mathbb{R}^{n})$ is a bounded linear operator. 
\end{lem}

Note that 
\[
Pu=\sum_{j,k=1}^{n}a_{jk}\partial_{j}\partial_{k}u+\sum_{j,k=1}^{n}(\partial_{j}a_{jk})\partial_{k}u.
\]
Since $a_{jk}$ is uniformly Lipschitz, then 
\begin{equation}
\|-Pu\|_{L^{2}(\mathbb{R}^{n})}\le C\|u\|_{H^{2}(\mathbb{R}^{n})}.\label{eq:dom1}
\end{equation}
We here also remark that ${\rm dom}\,(-P) = H^{2}(\mathbb{R}^{n})$ is the maximal extension such that $-P$ is self-adjoint and densely defined in $L^{2}(\mathbb{R}^{n})$, see \cite[equation~(2.8)]{GLX17FractionalCalderon}. Given any $\phi \in \mathcal{C}_{c}^{\infty}(\mathbb{R}^{n})$, we see that 
\[
\langle Pu,\phi \rangle = (u,P\phi)_{L^{2}(\mathbb{R}^{n})} \le \|u\|_{L^{2}(\mathbb{R}^{n})} \|P\phi\|_{L^{2}(\mathbb{R}^{n})} \le C\|u\|_{L^{2}(\mathbb{R}^{n})} \|\phi\|_{H^{2}(\mathbb{R}^{n})}
\]
where $\langle\bullet,\bullet\rangle$ is the $H^{-2}(\mathbb{R}^{n})\oplus H^{2}(\mathbb{R}^{n})$ duality pair. Since 
\begin{equation*}
\mathcal{C}_{c}^{\infty}(\mathbb{R}^{n}) \text{ is dense in } H^{\gamma}(\mathbb{R}^{n}) \text{ for each }\gamma \in \mathbb{R} \quad (\text{see e.g. \cite[Remark~7.4]{LM72FractionalSobolevSpacesVol1}}),
\end{equation*}
then we know that 
\begin{equation}
\|Pu\|_{H^{-2}(\mathbb{R}^{n})} \le C\|u\|_{L^{2}(\mathbb{R}^{n})}.\label{eq:dom2}
\end{equation}
We shall prove the followings: 
\begin{lem}
\label{lem:well2} Let $0<s<1$ and $a_{jk}$ given as in Lemma~{\rm \ref{lem:well0}}. We have the inequality
\begin{equation}
\|(-P)^{s}u\|_{L^{2}(\mathbb{R}^{n})}\le C\|u\|_{H^{2s}(\mathbb{R}^{n})}.\label{eq:well2-1}
\end{equation}
Moreover, we have 
\begin{equation}
\|(-P)^{s}u\|_{H^{-2s}(\mathbb{R}^{n})}\le C\|u\|_{L^{2}(\mathbb{R}^{n})}.\label{eq:well2-2}
\end{equation}
\end{lem}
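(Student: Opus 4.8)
The plan is to deduce both bounds from the Balakrishnan representation of $(-P)^s$, namely
\[
(-P)^{s}u = \frac{\sin(\pi s)}{\pi}\int_{0}^{\infty}t^{s-1}(-P)(t-P)^{-1}u\,dt,
\]
which is equivalent to \eqref{eq:definition-fractional} (cf. \cite{MS01}, \cite[Section IX.11]{Yos80}), combined with the elementary domination estimates \eqref{eq:dom1} and \eqref{eq:dom2} and interpolation applied to the \emph{single} operator $-P$ rather than to the family $\{(-P)^{\sigma}\}$. First I would record the resolvent bounds: since $-P$ generates the contraction semigroup $\{e^{tP}\}$ on $L^{2}(\mathbb{R}^{n})$, the resolvent $(t-P)^{-1}=(t+(-P))^{-1}$ satisfies $\|(t-P)^{-1}\|_{\mathcal{L}(L^{2})}\le t^{-1}$ for $t>0$, and moreover $(t-P)^{-1}$ maps $L^{2}$ into the domain of $P$, which here contains $H^{2}(\mathbb{R}^{n})$ by the elliptic estimate that underlies \eqref{eq:dom1}; one gets $\|(t-P)^{-1}u\|_{H^{2}}\le C(1+t^{-1})\|u\|_{L^{2}}$ and, dually, $\|(t-P)^{-1}u\|_{L^{2}}\le C(1+t^{-1})\|u\|_{H^{-2}}$.

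For \eqref{eq:well2-1}, I would split the Balakrishnan integral at $t=1$. On $t\ge 1$ I write $(-P)(t-P)^{-1}=1-t(t-P)^{-1}$, so this piece is bounded on $L^{2}$ with norm $\lesssim\int_{1}^{\infty}t^{s-1}\,(\text{bounded})\,dt$... which diverges, so instead I keep the factor and use $\|(-P)(t-P)^{-1}u\|_{L^{2}}=\|t(t-P)^{-1}u - u\|_{L^2}\le 2\|u\|_{L^2}$ only after pairing with the smallness of the interpolated piece; the cleaner route is to interpolate directly. Concretely, \eqref{eq:dom1} says $-P:H^{2}\to L^{2}$ is bounded and trivially $-P:H^{0}\to H^{-2}$ is bounded; these are the endpoints $\sigma=0$ and $\sigma=1$ of the scale, and since the Balakrishnan formula exhibits $(-P)^{s}$ as a ``fractional power'' interpolating between $\mathrm{Id}$ (at $s=0$) and $-P$ (at $s=1$), complex interpolation of the single bounded operator $-P$ between $(H^{2},L^{2})$ and $(L^{2},H^{-2})$ — equivalently, the moment inequality $\|(-P)^{s}u\|_{L^2}\le C\|(-P)u\|_{L^2}^{s}\|u\|_{L^2}^{1-s}$ for $u\in D(-P)$ — together with \eqref{eq:dom1} gives \eqref{eq:well2-1} on $H^{2}$, and a density argument extends it to $H^{2s}$ after identifying $[H^{2},L^{2}]_{s}=H^{2s}$. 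The bound \eqref{eq:well2-2} follows by the dual argument: $(-P)^{s}$ is (formally) self-adjoint, so $\|(-P)^{s}u\|_{H^{-2s}}=\sup_{\|\phi\|_{H^{2s}}=1}\langle u,(-P)^{s}\phi\rangle\le\|u\|_{L^{2}}\sup_{\|\phi\|_{H^{2s}}=1}\|(-P)^{s}\phi\|_{L^{2}}\le C\|u\|_{L^{2}}$ by \eqref{eq:well2-1}, exactly mirroring the passage from \eqref{eq:dom1} to \eqref{eq:dom2}.

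The main obstacle I anticipate is making the interpolation step rigorous for \emph{non-smooth} (merely Lipschitz) coefficients: one must justify that the moment inequality for fractional powers applies, i.e. that $-P$ with domain $D(-P)\supseteq H^{2}$ is a (negative) generator of a bounded analytic semigroup (or at least a nonnegative self-adjoint operator), so that the Balakrishnan/Komatsu theory of fractional powers is available and the interpolation space $[D(-P),L^{2}]_{1-s}$ is correctly identified. For $s=1/2$ this is unnecessary because $(-P)^{1/2}$ can be read off directly from the Caffarelli–Silvestre extension \eqref{eq:Sch-ext} and the energy identity, sidestepping the abstract machinery; for general $s$ the care is in the domain identification $[H^{2},L^{2}]_{s}=H^{2s}$ and in checking the moment inequality constant is uniform in the relevant class of coefficients. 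I would present the argument by first stating the resolvent bounds as a sublemma, then doing $s=1/2$ by hand, and finally invoking the moment inequality for the general case, citing \cite{MS01,Yos80} for the fractional-power calculus.
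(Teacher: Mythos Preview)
There is a genuine gap in the central step. The moment inequality $\|(-P)^{s}u\|_{L^{2}}\le C\|(-P)u\|_{L^{2}}^{s}\|u\|_{L^{2}}^{1-s}$ combined with \eqref{eq:dom1} only yields
\[
\|(-P)^{s}u\|_{L^{2}}\le C\|u\|_{H^{2}}^{s}\|u\|_{L^{2}}^{1-s}\qquad(u\in H^{2}),
\]
and this is \emph{not} a bound by $\|u\|_{H^{2s}}$: the interpolation inequality reads $\|u\|_{H^{2s}}\le\|u\|_{H^{2}}^{s}\|u\|_{L^{2}}^{1-s}$, so the right–hand side above can be strictly larger than $\|u\|_{H^{2s}}$. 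A density argument cannot manufacture the missing inequality; if $u_{k}\to u$ in $H^{2s}$ with $u_{k}\in H^{2}$, nothing controls $\|u_{k}-u_{l}\|_{H^{2}}$. What you actually need is the domain identification $D((-P)^{s})=H^{2s}$ with equivalent norms, which in turn requires both (i) the abstract fact $D((-P)^{s})=[D(-P),L^{2}]_{1-s}$ and (ii) elliptic regularity $D(-P)=H^{2}$, i.e.\ the \emph{converse} of \eqref{eq:dom1}. You flag (i) as an obstacle but treat (ii) as if it were contained in \eqref{eq:dom1}; it is not. Also, ``complex interpolation of the single operator $-P$ between $(H^{2},L^{2})$ and $(L^{2},H^{-2})$'' gives information about $-P$ on intermediate spaces, not about $(-P)^{s}$; it is not equivalent to the moment inequality.

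The paper sidesteps both (i) and (ii). It never identifies $D((-P)^{s})$ and never invokes elliptic regularity for $-P$. Instead, it takes as input Lemma~\ref{lem:well1} (the bound $(-P)^{\sigma}:\dot H^{\sigma}\to\dot H^{-\sigma}$ coming from the Caffarelli--Silvestre extension) together with the additivity \eqref{eq:add}, and first proves the \emph{squared} statement $\|(-P)^{2s}u\|_{H^{-2s}}\le C\|u\|_{H^{2s}}$ by writing $(-P)^{2s}=(-P)^{2s-1}(-P)=(-P)(-P)^{2s-1}$, using Lemma~\ref{lem:well1} on the fractional factor, and interpolating the two resulting endpoint bounds. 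Then it extracts \eqref{eq:well2-1} via the self-adjoint ``square-root'' identity
\[
\|(-P)^{s}u\|_{L^{2}}^{2}=\langle(-P)^{2s}u,u\rangle\le\|(-P)^{2s}u\|_{H^{-2s}}\|u\|_{H^{2s}}\le C\|u\|_{H^{2s}}^{2}.
\]
Your duality derivation of \eqref{eq:well2-2} from \eqref{eq:well2-1} is correct and matches the paper's remark.
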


\begin{rem}
Using the duality argument as in \eqref{eq:dom2}, we know that \eqref{eq:well2-1} and \eqref{eq:well2-2} are equivalent. 
\end{rem}

In order to prove Lemma~\ref{lem:well2}, we introduce the Balakrishnan operator as in \cite[Definition~3.1.1 and Definition~5.1.1]{MS01FractionalOperators}. 

\begin{defn}
Let $\alpha\in\mathbb{C}_{+}=\{z\in\mathbb{C}:\Re z>0\}$. 
\begin{enumerate}
\item If $0<\Re\alpha<1$, then ${\rm dom}\,((-P)_{B}^{\alpha})={\rm dom}\,(-P)$
and 
\[
(-P)_{B}^{\alpha}\phi=\frac{\sin\alpha\pi}{\pi}\int_{0}^{\infty}\lambda^{\alpha-1}(\lambda-P)^{-1}(-P)\phi\,d\lambda.
\]
\item If $\Re\alpha=1$, then ${\rm dom}\,((-P)_{B}^{\alpha})={\rm dom}\,((-P)^{2})$
and 
\[
(-P)_{B}^{\alpha}\phi=\frac{\sin\alpha\pi}{\pi}\int_{0}^{\infty}\lambda^{\alpha-1}\bigg[(\lambda-P)^{-1}-\frac{\lambda}{\lambda^{2}+1}\bigg](-P)\phi\,d\lambda+\sin\frac{\alpha\pi}{2}(-P)\phi.
\]
\item If $n<\Re\alpha<n+1$ for $n\in\mathbb{N}$, then ${\rm dom}\,((-P)_{B}^{\alpha})={\rm dom}\,((-P)^{n+1})$
and 
\[
(-P)_{B}^{\alpha}\phi=(-P)_{B}^{\alpha-n}(-P)^{n}\phi.
\]
\item If $\Re\alpha=n+1$ for $n\in\mathbb{N}$, then ${\rm dom}\,((-P)_{B}^{\alpha})={\rm dom}\,((-P)^{n+2})$
and 
\[
(-P)_{B}^{\alpha}\phi=(-P)_{B}^{\alpha-n}(-P)^{n}\phi.
\]
\end{enumerate}
\end{defn}

The following proposition, which can be found at \cite[Theorem~6.1.6]{MS01FractionalOperators}, shows that $(-P)_{B}^{s}$ and $(-P)^{s}$ are equivalent. 
\begin{prop}
Let $0<s<1$. If $u\in{\rm dom}\,((-P)_{B}^{s})$, then the strong limit 
\[
\lim_{\epsilon\rightarrow0_{+}}\int_{\epsilon}^{\infty}(1-e^{tP})u\,\frac{dt}{t^{1+s}}\quad\text{exists}
\]
and 
\[
(-P)_{B}^{s}u=c_{s}'\lim_{\epsilon\rightarrow0_{+}}\int_{\epsilon}^{\infty}(1-e^{tP})u\,\frac{dt}{t^{1+s}}\quad\text{for some positive constant }c_{s}',
\]
where $\{e^{tP}\}_{t\ge0}$ is the heat-diffusion semigroup generated
by $-P$. 
\end{prop}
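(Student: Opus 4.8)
The plan is to transform the Balakrishnan integral defining $(-P)_{B}^{s}$ directly into the heat-semigroup integral \eqref{eq:definition-fractional}. Under \eqref{eq:unif-ellip} the operator $-P$ is non-negative and self-adjoint on $L^{2}(\mathbb{R}^{n})$, and $\{e^{tP}\}_{t\ge0}$ is the associated contraction semigroup; hence for $\lambda>0$ the resolvent of $-P$ at $-\lambda$ is the Laplace transform $(\lambda-P)^{-1}=\int_{0}^{\infty}e^{-\lambda t}e^{tP}\,dt$, with $\|\lambda(\lambda-P)^{-1}\|_{L^{2}\to L^{2}}\le1$, so the Balakrishnan integral converges absolutely for $\phi\in{\rm Dom}(-P)={\rm Dom}((-P)_{B}^{s})$.

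First, I would rewrite the integrand: since $(-P)(\lambda-P)^{-1}=I-\lambda(\lambda-P)^{-1}$, using $1=\lambda\int_{0}^{\infty}e^{-\lambda t}\,dt$ together with the Laplace representation of $(\lambda-P)^{-1}$ gives, for $\phi\in{\rm Dom}(-P)$,
\[
(\lambda-P)^{-1}(-P)\phi=\phi-\lambda(\lambda-P)^{-1}\phi=\lambda\int_{0}^{\infty}e^{-\lambda t}(1-e^{tP})\phi\,dt .
\]
Substituting into the definition of $(-P)_{B}^{s}$ produces the double integral $\frac{\sin s\pi}{\pi}\int_{0}^{\infty}\lambda^{s}\big(\int_{0}^{\infty}e^{-\lambda t}(1-e^{tP})\phi\,dt\big)\,d\lambda$.

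Second, I would interchange the two integrations and evaluate the inner $\lambda$-integral. The key is the elementary bound, valid exactly because $\phi\in{\rm Dom}(-P)$,
\[
\|(1-e^{tP})\phi\|_{L^{2}(\mathbb{R}^{n})}=\Big\|\int_{0}^{t}e^{rP}P\phi\,dr\Big\|_{L^{2}(\mathbb{R}^{n})}\le\min\bigl\{t\,\|P\phi\|_{L^{2}(\mathbb{R}^{n})},\ 2\|\phi\|_{L^{2}(\mathbb{R}^{n})}\bigr\}.
\]
Since $\int_{0}^{\infty}\lambda^{s}e^{-\lambda t}\,d\lambda=\Gamma(s+1)\,t^{-1-s}$, the $t$-integrand of the double integral is $O(t^{-s})$ as $t\to0_{+}$ (integrable because $s<1$) and $O(t^{-1-s})$ as $t\to\infty$ (integrable because $s>0$), so Fubini applies and yields
\[
(-P)_{B}^{s}\phi=\frac{\sin s\pi}{\pi}\,\Gamma(s+1)\int_{0}^{\infty}\frac{(1-e^{tP})\phi}{t^{1+s}}\,dt .
\]
Euler's reflection formula $\Gamma(s)\Gamma(1-s)=\pi/\sin s\pi$ identifies the constant as $\tfrac{\sin s\pi}{\pi}\Gamma(s+1)=\tfrac{s}{\Gamma(1-s)}=:c_{s}'>0$, which is the normalization in \eqref{eq:definition-fractional}, since $1/\Gamma(-s)=-s/\Gamma(1-s)$ and therefore $\tfrac{1}{\Gamma(-s)}(e^{tP}-1)=c_{s}'(1-e^{tP})$. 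The existence of the strong limit $\lim_{\epsilon\to0_{+}}\int_{\epsilon}^{\infty}(1-e^{tP})\phi\,\tfrac{dt}{t^{1+s}}$ is then automatic, as the absolute convergence established above gives $\int_{\epsilon}^{\infty}\to\int_{0}^{\infty}$ as $\epsilon\to0_{+}$.

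The step I expect to be the main obstacle is the interchange of integrations. Because $P$ has no spectral gap, $\|e^{tP}\|_{L^{2}\to L^{2}}$ does not decay, so a naive application of Tonelli to $\lambda^{s-1}e^{-\lambda t}\|e^{tP}P\phi\|$ fails — that integral diverges at large $t$. Passing from $e^{tP}$ to the difference $1-e^{tP}$ (equivalently, the identity $(\lambda-P)^{-1}(-P)\phi=\lambda\int_{0}^{\infty}e^{-\lambda t}(1-e^{tP})\phi\,dt$) is what rescues the argument: it trades decay of the semigroup for the $O(t)$ vanishing of $1-e^{tP}$ near $t=0$ together with boundedness near $t=\infty$. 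This is precisely the point at which the domain hypothesis $\phi\in{\rm Dom}((-P)_{B}^{s})={\rm Dom}(-P)$ is used in an essential way.
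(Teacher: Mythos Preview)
Your argument is correct and complete: the Laplace representation of the resolvent, the rewriting $(\lambda-P)^{-1}(-P)\phi=\lambda\int_{0}^{\infty}e^{-\lambda t}(1-e^{tP})\phi\,dt$, the Fubini step justified by the two-sided bound $\|(1-e^{tP})\phi\|\le\min\{t\|P\phi\|,2\|\phi\|\}$, and the constant identification via Euler's reflection formula all check out. Note, however, that the paper does not give its own proof of this proposition; it is stated without proof as a citation of Theorem~6.1.6 in \cite{MS01}, so there is no in-paper argument to compare against --- your self-contained derivation simply supplies what the paper outsources to the reference.
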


Here and after, we shall not distinguish
between $(-P)^{s}$ and $(-P)_{B}^{s}$, as well as ${\rm dom}\,((-P)^{s})$ and ${\rm dom}\,((-P)_{B}^{s})$. Using \cite[Theorem~5.1.2]{MS01FractionalOperators}, we have the following fact: 
\[
\text{If } u \in {\rm dom}\,((-P)^{\alpha + \beta}) \text{, then }(-P)^{\beta}u \in {\rm dom}\,((-P)^{\alpha}),
\]	
and the following identity holds:  
\begin{equation}
(-P)^{\alpha}(-P)^{\beta}u=(-P)^{\alpha+\beta}u \quad \text{for all}\;\;u\in {\rm dom}\,((-P)^{\alpha + \beta}) \label{eq:add}
\end{equation}
for all $\alpha,\beta\in\mathbb{C}$ with $\Re \alpha>0$
and $\Re \beta>0$. Since $(-P)^{s}$ is self-adjoint in $L^{2}(\mathbb{R}^{n})$, then  
\[
\|(-P)^{s}u\|_{L^{2}(\mathbb{R}^{n})}^{2} = ((-P)^{2s}u,u)_{L^{2}(\mathbb{R}^{n})}.
\]
Now we are ready to prove Lemma~\ref{lem:well2}. 
\begin{proof}[Proof of Lemma~{\rm \ref{lem:well2}}]
We first consider the case when $0< s \le 1/2$. Since $(-P)^{s}$ is self-adjoint, by observing that $(-P)^{2s}=(-P)^{s}(-P)^{s}$ (using \eqref{eq:add}), Lemma~\ref{lem:well1} immediate implies
\begin{equation}
\|(-P)^{s}u\|_{L^{2}(\mathbb{R}^{n})}^{2} = ((-P)^{2s}u,u)_{L^{2}(\mathbb{R}^{n})} \le \|(-P)^{2s}u\|_{H^{-2s}(\mathbb{R}^{n})} \|u\|_{H^{2s}(\mathbb{R}^{n})} \le C\|u\|_{H^{2s}(\mathbb{R}^{n})}^{2}.\label{eq:continuity-2s}
\end{equation}
When $1/2<s<1$, by observing that $(-P)^{2s}=(-P)^{2s-1}(-P)=(-P)(-P)^{2s-1}$ (using \eqref{eq:add}) and $0<2s-1<1$, using Lemma~\ref{lem:well1} we can easily show that 
\begin{align*}
\|(-P)^{2s}u\|_{H^{1-2s}(\mathbb{R}^{n})} & \le C\|u\|_{H^{1+2s}(\mathbb{R}^{n})} \\
\|(-P)^{2s}u\|_{H^{-1-2s}(\mathbb{R}^{n})} & \le C\|u\|_{H^{-1+2s}(\mathbb{R}^{n})}.
\end{align*}
By interpolating the above two inequalities, we conclude that \eqref{eq:continuity-2s} holds for all $0<s<1$, and we complete the proof of Lemma~\ref{lem:well2}. 
\end{proof}

\section{\label{sec:decay}Boundary Decay Implies Bulk Decay}

Firstly, we translate the decay behavior on $\mathbb{R}^{n}$
to decay behavior which is also holds on $\mathbb{R}_{+}^{n+1}$. 
\begin{prop}
\label{prop:bulk-decay} Let $s\in(0,1)$ and $u\in H^{s}(\mathbb{R}^{n})$
be a solution to \eqref{eq:Sch}, with \eqref{eq:unif-ellip} and \eqref{eq:decay}.
For $s\neq\frac{1}{2}$, we further assume \eqref{eq:2der-bdd}. Assume
that $|q(x)|\le1$ and there exists $\alpha>1$ such that 
\[
\int_{\mathbb{R}^{n}}e^{|x|^{\alpha}}|u|^{2}\,dx\le C<\infty.
\]
Then there exist constants $C_{1},C_{2}>0$  so that
the Caffarelli-Silvestre type extension $\tilde{u}(x)$ satisfies
\[
|\tilde{u}(x)|\le C_{1}e^{-C_{2}|x|^{\alpha}}\quad\text{for all }x\in\mathbb{R}_{+}^{n+1}.
\]
\end{prop}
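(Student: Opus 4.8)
The plan is to exploit the structure of the extension equation \eqref{eq:Sch-ext} together with the a priori bound \eqref{eq:apriori}, and to propagate the decay from the boundary $\mathbb{R}^n\times\{0\}$ into the bulk $\mathbb{R}^{n+1}_+$ by a combination of energy estimates (Caccioppoli-type inequalities on half-annuli) and a De Giorgi--Nash--Moser / mean-value type iteration for the degenerate elliptic operator $\partial_{n+1}x_{n+1}^{1-2s}\partial_{n+1}+x_{n+1}^{1-2s}P$, which is uniformly elliptic with respect to the $A_2$-Muckenhoupt weight $x_{n+1}^{1-2s}$. The key quantitative input is that the boundary datum $u$ satisfies a super-exponential integrability bound $\int_{\mathbb{R}^n}e^{|x|^\alpha}|u|^2\,dx<\infty$ with $\alpha>1$, which by a dyadic decomposition yields $\int_{B'_{2R}\setminus B'_R}|u|^2\,dx\le C e^{-cR^\alpha}$ for $R$ large.

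First I would use the Poisson-type representation (or, equivalently, the semigroup representation \eqref{eq:definition-fractional}) for $\tilde u$ in terms of $u$; heuristically $\tilde u(x',x_{n+1})=\int_{\mathbb{R}^n}P_{x_{n+1}}(x',y)u(y)\,dy$ where the kernel $P_{x_{n+1}}$ built from the heat semigroup $e^{tP}$ has Gaussian-type bounds (by Aronson's estimates, valid since $a_{jk}$ is uniformly elliptic and bounded). From the Gaussian bound $P_{x_{n+1}}(x',y)\lesssim x_{n+1}^{2s}(x_{n+1}^2+|x'-y|^2)^{-(n+2s)/2}$ one splits the $y$-integral into the region $|y|\le|x'|/2$ and its complement. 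On the far region the bound $e^{-c|y|^\alpha}$ for the $L^2$ mass of $u$ controls the contribution; on the near region $|x'-y|\gtrsim|x'|$, so the kernel itself is small, of size $x_{n+1}^{2s}|x'|^{-(n+2s)}$ away from $x_{n+1}=0$, and one still gets a factor decaying in $|x'|$ — but only polynomially, not like $e^{-c|x'|^\alpha}$. To upgrade polynomial-in-$|x'|$ decay to the claimed $e^{-C_2|x|^\alpha}$ one instead iterates: once we know $\tilde u$ is small in $L^2$ on a half-annulus $A^+_{R,2R}$ (from the boundary smallness plus \eqref{eq:apriori} and an energy inequality localized there), the subsolution property of $|\tilde u|$ and the Moser iteration for the weighted operator give the pointwise bound $\sup_{A^+_{3R/2,7R/4}}|\tilde u|\le C R^{-\gamma}\big(\int_{A^+_{R,2R}}x_{n+1}^{1-2s}|\tilde u|^2\,dx\big)^{1/2}$, and the $L^2$-on-annulus quantity is the one that inherits the $e^{-cR^\alpha}$ decay.

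The cleanest route to the $L^2$-on-half-annulus decay is a Carleman or weighted-energy argument directly on \eqref{eq:Sch-ext}: test the equation against $\eta^2 e^{\psi}\tilde u$ for a cutoff $\eta$ supported in a large half-annulus and a radial weight $\psi(x)=\tau\phi(|x|)$ with $\phi$ growing slightly slower than $|x|^\alpha$, absorb the zeroth order term coming from $q$ (using $|q|\le1$), and use the boundary term $\int x_{n+1}^{1-2s}\partial_{n+1}\tilde u\,\tilde u$, which equals (up to $c_{n,s}$) $\int ((-P)^s u)u=-\int qu^2$ again bounded by the boundary $L^2$ data. The assumption \eqref{eq:decay}, that $a_{jk}\to\delta_{jk}$ with $|x||\nabla a_{jk}|\le\epsilon$ for $|x|\ge1$, is what makes the error terms generated by commuting $P$ with the radial weight lower-order and absorbable; \eqref{eq:2der-bdd} is needed (for $s\neq1/2$) to control the second-derivative remainder that appears when one integrates by parts twice, matching the discussion of the $\tilde\nabla(\nabla\tilde u)$ terms in the paper's ``Main difficulties'' subsection.

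The main obstacle I expect is precisely the interplay at the degenerate boundary $x_{n+1}=0$: the weight $x_{n+1}^{1-2s}$ degenerates or blows up there depending on $s$, so the trace terms in the energy identity must be handled through the Caffarelli--Silvestre identification $\lim_{x_{n+1}\to0}x_{n+1}^{1-2s}\partial_{n+1}\tilde u=c_{n,s}^{-1}(-P)^s u=-c_{n,s}^{-1}qu$ and the a priori bound \eqref{eq:apriori}, rather than by naive integration by parts; and one must ensure that the constant $C_2$ in the exponent does not degenerate as $R\to\infty$, which forces the weight $\phi$ in the energy argument to be chosen strictly subordinate to $|x|^\alpha$ (e.g. $\phi(r)=r^{\alpha'}$ with $1<\alpha'<\alpha$) and then the polynomial losses from Moser iteration to be swallowed by the gap $\alpha-\alpha'>0$. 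Once the half-annulus $L^2$-decay $e^{-cR^{\alpha'}}$ is in hand for every $\alpha'<\alpha$, the pointwise statement with some $C_2>0$ follows, which is all that is asserted.
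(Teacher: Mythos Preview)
Your proposal has a genuine gap in the second, ``cleanest route'' step. You want to obtain $L^2$-smallness of $\tilde u$ on the half-annulus $A^+_{R,2R}$ by testing \eqref{eq:Sch-ext} against $\eta^2 e^{\tau\phi(|x|)}\tilde u$ with a radial weight $\phi(r)=r^{\alpha'}$. The problem is the monotonicity of the weight: $e^{\tau\phi}$ is \emph{largest} on the outer part of the cutoff region $\{|x|\sim 2R\}$, so the commutator/cutoff errors supported there dominate the interior contribution you are trying to control. To absorb those outer errors you would already need super-exponential decay of $\tilde u$ for $|x|\sim 2R$---which is exactly the conclusion you are after. (This is precisely why, in Section~\ref{sec:result}, the paper invokes Proposition~\ref{prop:bulk-decay} \emph{before} it can send $R\to\infty$ in the radial Carleman argument.) The a~priori bound \eqref{eq:apriori} does not help here: it is a global $L^2$ bound on each slice, not a localized decay statement, and in particular gives no smallness on $A^+_{R,2R}$ when $x_{n+1}$ is large. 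A plain energy/Caccioppoli inequality only trades $\nabla\tilde u$ for $\tilde u$ and likewise produces no smallness.

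The paper avoids this circularity by a completely different mechanism. It first proves a boundary--bulk propagation-of-smallness estimate (Lemma~\ref{lem:small}) via a Carleman inequality (Lemma~\ref{lem:0Carl}) whose weight
\[
\phi(x',x_{n+1})=-\tfrac{|x'|^{2}}{4}+2\Big(-\tfrac{1}{2-2s}x_{n+1}^{2-2s}+\tfrac12 x_{n+1}^{2}\Big)
\]
is \emph{decreasing in $x_{n+1}$} on the support of the cutoff, so information flows from the boundary $\{x_{n+1}=0\}$ into the bulk. This is combined with a splitting $\tilde u=u_1+u_2$ (to reduce to vanishing Dirichlet data), the regularity estimate of Lemma~\ref{lem:well2} for $(-P)^s$, an interior three-ball inequality (Lemma~\ref{lem:interior}), and finally a chain-of-balls argument \`a la \cite{RW19} to carry the boundary smallness $e^{-c|x'|^{\alpha}}$ of $u$ to every point of $\mathbb{R}^{n+1}_+$. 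No radial Carleman weight is used at this stage; conditions \eqref{eq:decay} and \eqref{eq:2der-bdd} enter in Lemma~\ref{lem:0Carl}, where the singular factor $\partial_{n+1}^2\phi\sim x_{n+1}^{-2s}$ forces an extra integration by parts that requires a bound on $\nabla^2 a_{jk}$ when $s\neq\tfrac12$. Your Poisson-kernel intuition and Moser step are fine as auxiliary tools, but the missing ingredient is this anisotropic boundary--bulk Carleman estimate plus the chain-ball propagation, not a global radial energy identity.
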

The ideas of proving Proposition~\ref{prop:bulk-decay} is similar to \cite[Proposition~2.2]{RW19Landis}. The proof of \cite[Proposition~2.2]{RW19Landis} utilized \cite[Propositions~5.10--5.12]{RS20Calderon}. The extension of such propositions involving many details, especially the Carleman estimate in \cite[Propositions~5.7]{RS20Calderon}. For sake of readability, here we present the details of the proofs.

In order to obtain the interior decay, similar to \cite[Proposition~2.3]{RW19Landis}, we need the following three-ball inequalities. 
\begin{lem}
\label{lem:interior}Let $s\in(0,1)$ and $\tilde{u}\in H^{1}(B_{4}^{+},x_{n+1}^{1-2s})$
be a solution to 
\[
\bigg[\partial_{n+1}x_{n+1}^{1-2s}\partial_{n+1}+x_{n+1}^{1-2s}P\bigg]\tilde{u}=0\quad\text{in }\mathbb{R}_{+}^{n+1}
\]
with \eqref{eq:unif-ellip}. Assume that $r\in(0,1)$ and $\overline{x}_{0}=(\overline{x}_{0}',5r)\in B_{2}^{+}$.
Then, there exists $\alpha=\alpha(n,s)\in(0,1)$ such that 
\[
\|\tilde{u}\|_{L^{\infty}(B_{2r}^{+}(\overline{x}_{0}))}\le C\|\tilde{u}\|_{L^{\infty}(B_{r}^{+}(\overline{x}_{0}))}^{\alpha}\|\tilde{u}\|_{L^{\infty}(B_{4r}^{+}(\overline{x}_{0}))}^{1-\alpha}.
\]
\end{lem}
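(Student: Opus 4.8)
The plan is to prove Lemma~\ref{lem:interior} by the standard route for three-ball (propagation of smallness) estimates for degenerate elliptic equations of Caffarelli--Silvestre type: pass to an even extension so that the weight $x_{n+1}^{1-2s}$ becomes an $A_2$-Muckenhoupt weight, establish a quantitative uniqueness bound via an Almgren-type frequency function (or, equivalently, via a Carleman inequality), and then iterate/interpolate. First I would even-reflect $\tilde{u}$ across $\{x_{n+1}=0\}$. Because $\tilde u$ solves the extension equation in $\mathbb{R}^{n+1}_+$ and the natural (conormal) Neumann data $\lim_{x_{n+1}\to 0}x_{n+1}^{1-2s}\partial_{n+1}\tilde u = c_{n,s}^{-1}(-P)^s u$ need not vanish here, I should be a little careful: the clean even-reflection trick works when the Neumann datum is $0$. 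In general one instead works directly on the half-ball, treating $\{x_{n+1}=0\}$ as a portion of the boundary on which $\tilde u$ satisfies a Robin-type condition coming from $(-P)^su=-qu$; since $|q|\le 1$ this is a bounded zeroth-order perturbation and does not affect the qualitative three-ball statement. Alternatively, and more in the spirit of \cite{RW19,RS20}, one derives the estimate from an interior Carleman estimate with weight a radial function centered at $\overline x_0$; note that the hypothesis $\overline x_0=(\overline x_0',5r)$ keeps all the balls $B_{4r}^+(\overline x_0)$ at distance comparable to $r$ from the degeneracy set $\{x_{n+1}=0\}$, so on these balls the weight $x_{n+1}^{1-2s}$ is bounded above and below by positive constants (depending on $r$, but the inequality is scale-invariant after rescaling $x\mapsto (x-\overline x_0)/r$), and the equation is \emph{uniformly} elliptic there with Lipschitz coefficients satisfying \eqref{eq:unif-ellip}.

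So the cleanest presentation is: \textbf{Step 1 (rescaling).} Set $v(y):=\tilde u(\overline x_0+ry)$ for $y\in B_4$; then $v$ solves a uniformly elliptic equation $\operatorname{div}(A_r(y)\nabla v)=0$ in $B_4\subset\mathbb{R}^{n+1}$, where $A_r$ absorbs the (now non-degenerate, smooth, uniformly elliptic on the relevant set) weight together with the rescaled $a_{jk}$; the ellipticity constants and Lipschitz bounds are uniform in $r\in(0,1)$ and $\overline x_0\in B_2^+$ by \eqref{eq:unif-ellip}--\eqref{eq:decay}. \textbf{Step 2 (quantitative unique continuation).} Apply the classical three-ball inequality for second-order uniformly elliptic equations with Lipschitz coefficients — e.g. via the Almgren frequency function monotonicity of Garofalo--Lin, or via a standard Carleman estimate — on the concentric balls $B_1\subset B_2\subset B_4$ to get $\|v\|_{L^\infty(B_2)}\le C\|v\|_{L^\infty(B_1)}^{\alpha}\|v\|_{L^\infty(B_4)}^{1-\alpha}$ with $\alpha=\alpha(n,s)\in(0,1)$ and $C=C(n,s)$ independent of $r$. (Here $L^\infty$ bounds follow from De Giorgi--Nash--Moser applied to the uniformly elliptic rescaled equation; one first proves the three-ball inequality in an $L^2$ form and upgrades via interior elliptic estimates.) \textbf{Step 3 (undo the rescaling).} Translating back via $v(y)=\tilde u(\overline x_0+ry)$ gives exactly $\|\tilde u\|_{L^\infty(B_{2r}^+(\overline x_0))}\le C\|\tilde u\|_{L^\infty(B_r^+(\overline x_0))}^{\alpha}\|\tilde u\|_{L^\infty(B_{4r}^+(\overline x_0))}^{1-\alpha}$, and since $\overline x_0\in B_2^+$ with $r<1$ all these balls lie in $B_4^+$, so the quantities on the right are finite by the assumption $\tilde u\in H^1(B_4^+,x_{n+1}^{1-2s})$ together with the Moser bound.

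The point I expect to be the main obstacle — or at least the only place requiring genuine care rather than citation — is keeping all constants uniform as $r\to 0$ while the balls approach, but do not touch, the degeneracy hyperplane $\{x_{n+1}=0\}$: one must verify that after the rescaling $y\mapsto (x-\overline x_0)/r$ the coefficient matrix $A_r$ has ellipticity and Lipschitz constants independent of $r$. This works precisely because the centers have height exactly $5r$, so $B_{4r}^+(\overline x_0)\subset\{x_{n+1}\in[r,9r]\}$ and the rescaled weight $(5+y_{n+1})^{1-2s}$ is a smooth function bounded between $4^{1-2s}$ and $14^{1-2s}$ (up to the sign of $1-2s$) on $B_4$, with bounded derivatives — all uniform in $r$. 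The remaining ingredients (the interior $A_2$-weighted theory of \cite{CS07}, the three-ball inequality, and Caccioppoli/Moser estimates) are standard and can be invoked; one may also simply cite \cite[Proposition 2.3]{RW19} and \cite[Lemma~4.3 or similar]{RS20}, noting that the only change is the replacement of $\Delta$ by the uniformly elliptic $P$ with Lipschitz coefficients, which does not affect any of those arguments.
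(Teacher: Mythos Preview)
Your proposal is correct and follows essentially the same approach as the paper, which simply observes that because $(\overline{x}_{0})_{n+1}=5r$ the balls $B_{4r}^{+}(\overline{x}_{0})$ stay at distance comparable to $r$ from $\{x_{n+1}=0\}$, so the equation is uniformly elliptic there and the result follows from the standard interior $L^{2}$ three-ball inequality combined with $L^{\infty}$--$L^{2}$ elliptic estimates. Your additional discussion of even reflection and Robin conditions is unnecessary here (as you yourself note), and the minor arithmetic slip in the bounds for $5+y_{n+1}$ is harmless.
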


\begin{proof}
As $(\overline{x}_{0})_{n+1}=5r$, this follows from a standard interior
$L^{2}$ three ball inequalities together with $L^{\infty}$-$L^{2}$
estimates for uniformly elliptic equations. 
\end{proof}
Also, we need the following boundary-bulk propagation of smallness
estimation: 
\begin{lem}
\label{lem:small}Let $s\in(0,1)$ and let $\tilde{u}\in H^{1}(\mathbb{R}_{+}^{n+1},x_{n+1}^{1-2s})$
be a solution to \eqref{eq:Sch-ext} with \eqref{eq:unif-ellip} and
$q\in L^{\infty}(\mathbb{R}^{n})$. We assume that 
\[
\max_{1\le j,k\le n}\|a_{jk}-\delta_{jk}\|_{\infty}+\max_{1\le j,k\le n}\|\nabla'a_{jk}\|_{\infty}\le\epsilon
\]
for some sufficiently small $\epsilon>0$. For $s\neq\frac{1}{2}$,
we further assume 
\[
\max_{1\le j,k\le n}\|(\nabla')^{2}a_{jk}\|_{\infty}\le C
\]
for some positive constant $C$. Assume that $x_{0}\in\mathbb{R}^{n}\times\{0\}$.
Then 
\begin{enumerate}
	\renewcommand{\labelenumi}{\theenumi}
	\renewcommand{\theenumi}{(\alph{enumi})}
\item \label{itm:part-a:lem:small} There exist $\alpha=\alpha(n,s)\in(0,1)$ and $c=c(n,s)\in(0,1)$
such that 
\begin{align*}
 & \|x_{n+1}^{\frac{1-2s}{2}}\tilde{u}\|_{L^{2}(B_{cr}^{+}(x_{0}))}\\
\le & C\bigg[\|x_{n+1}^{\frac{1-2s}{2}}\tilde{u}\|_{L^{2}(B_{16r}^{+}(x_{0}))}+r^{1-s}\|u\|_{L^{2}(B_{16r}'(x_{0}))}\bigg]^{\alpha}\times\\
 & \quad\times\bigg[r^{s+1}\|\lim_{x_{n+1}\rightarrow0}x_{n+1}^{1-2s}\partial_{n+1}\tilde{u}\|_{L^{2}(B_{16r}'(x_{0}))}+r^{1-s}\|u\|_{L^{2}(B_{16r}'(x_{0}))}\bigg]^{1-\alpha}\\
 & +C\bigg[\|x_{n+1}^{\frac{1-2s}{2}}\tilde{u}\|_{L^{2}(B_{16r}^{+}(x_{0}))}+r^{1-s}\|u\|_{L^{2}(B_{16r}'(x_{0}))}\bigg]^{\frac{2s}{1+s}}\times\\
 & \quad\times\bigg[r^{s+1}\|\lim_{x_{n+1}\rightarrow0}x_{n+1}^{1-2s}\partial_{n+1}\tilde{u}\|_{L^{2}(B_{16r}'(x_{0}))}+r^{1-s}\|u\|_{L^{2}(B_{16r}'(x_{0}))}\bigg]^{\frac{1-s}{1+s}}.
\end{align*}
\item \label{itm:part-b:lem:small} There exist $\alpha=\alpha(n,s)\in(0,1)$ and $c=c(n,s)\in(0,1)$
such that 
\begin{align*}
 & \|x_{n+1}^{\frac{1-2s}{2}}\tilde{u}\|_{L^{\infty}(B_{\frac{cr}{2}}^{+}(x_{0}))}\\
\le & Cr^{-\frac{n}{2}}\bigg[r^{s-1}\|x_{n+1}^{\frac{1-2s}{2}}\tilde{u}\|_{L^{2}(B_{16r}^{+}(x_{0}))}+\|u\|_{L^{2}(B_{16r}'(x_{0}))}\bigg]^{\alpha}\times\\
 & \quad\times\bigg[r^{2s}\|\lim_{x_{n+1}\rightarrow0}x_{n+1}^{1-2s}\partial_{n+1}\tilde{u}\|_{L^{2}(B_{16r}'(x_{0}))}+\|u\|_{L^{2}(B_{16r}'(x_{0}))}\bigg]^{1-\alpha}\\
 & +Cr^{-\frac{n}{2}}\bigg[r^{s-1}\|x_{n+1}^{\frac{1-2s}{2}}\tilde{u}\|_{L^{2}(B_{16r}^{+}(x_{0}))}+\|u\|_{L^{2}(B_{16r}'(x_{0}))}\bigg]^{\frac{2s}{1+s}}\times\\
 & \quad\times\bigg[r^{2s}\|\lim_{x_{n+1}\rightarrow0}x_{n+1}^{1-2s}\partial_{n+1}\tilde{u}\|_{L^{2}(B_{16r}'(x_{0}))}+\|u\|_{L^{2}(B_{16r}'(x_{0}))}\bigg]^{\frac{1-s}{1+s}}\\
 & +Cr^{-\frac{n}{2}}r^{s}\|qu\|_{L^{2}(B_{16r}'(x_{0}))}^{\frac{1}{2}}\|u\|_{L^{2}(B_{16r}'(x_{0}))}^{\frac{1}{2}}.
\end{align*}
\end{enumerate}
\end{lem}

Using Lemma~\ref{lem:interior} and Lemma~\ref{lem:small}, and imitating the chain-ball argument in \cite{RW19Landis}, we can obtain Proposition~\ref{prop:bulk-decay}. 

\subsection{Proof of the part~\ref{itm:part-a:lem:small} of Lemma~\ref{lem:small} for the case $s\in[1/2,1)$}

We first prove the following extension of the Carleman estimate in \cite[Proposition 5.7]{RS20Calderon}. 
\begin{lem}
\label{lem:0Carl}Let $s\in[\frac{1}{2},1)$ and let $w\in H^{1}(\mathbb{R}_{+}^{n+1},x_{n+1}^{1-2s})$
with ${\rm supp}(w)\subset B_{1/2}^{+}$ be a solution to 
\begin{align*}
\bigg[\partial_{n+1}x_{n+1}^{1-2s}\partial_{n+1}+x_{n+1}^{1-2s}\sum_{j,k=1}^{n}a_{jk}\partial_{j}\partial_{k}\bigg]w & =f\quad\text{in }\mathbb{R}_{+}^{n+1},\\
w & =0\quad\text{on }\mathbb{R}^{n}\times\{0\}.
\end{align*}
Suppose that 
\[
\phi(x)=\phi(x',x_{n+1}):=-\frac{|x'|^{2}}{4}+2\bigg(-\frac{1}{2-2s}x_{n+1}^{2-2s}+\frac{1}{2}x_{n+1}^{2}\bigg).
\]
We assume that 
\[
\max_{1\le j,k\le n}\|a_{jk}-\delta_{jk}\|_{\infty}+\max_{1\le j,k\le n}\|\nabla'a_{jk}\|_{\infty}\le\epsilon
\]
for some sufficiently small $\epsilon>0$. For $s\neq\frac{1}{2}$,
we further assume 
\[
\max_{1\le j,k\le n}\|(\nabla')^{2}a_{jk}\|_{\infty}\le C
\]
for some positive constant $C$. Assume additionally that 
\begin{align*}
 & \|x_{n+1}^{\frac{2s-1}{2}}f\|_{L^{2}(\mathbb{R}_{+}^{n+1})}+\lim_{x_{x+1}\rightarrow0}\|\Delta'w\|_{L^{2}(\mathbb{R}^{n}\times\{0\})}\\
 & +\lim_{x_{n+1}\rightarrow0}\|\nabla'w\|_{L^{2}(\mathbb{R}^{n}\times\{0\})}+\lim_{x_{n+1}\rightarrow0}\|x_{n+1}^{1-2s}\partial_{n+1}w\|_{L^{2}(\mathbb{R}^{n}\times\{0\})}<\infty.
\end{align*}
Then there exist $\tau_{0}>1$ and a constant $C$ such that 
\begin{align*}
 & \tau^{3}\|e^{\tau\phi}x_{n+1}^{\frac{1-2s}{2}}w\|_{L^{2}(\mathbb{R}_{+}^{n+1})}^{2}+\tau\|e^{\tau\phi}x_{n+1}^{\frac{1-2s}{2}}\nabla w\|_{L^{2}(\mathbb{R}_{+}^{n+1})}^{2}\\
\le & C\bigg(\|e^{\tau\phi}x_{n+1}^{\frac{2s-1}{2}}f\|_{L^{2}(\mathbb{R}_{+}^{n+1})}^{2}+\tau^{-1}\lim_{x_{n+1}\rightarrow0}\|e^{\tau\phi}\Delta'w\|_{L^{2}(\mathbb{R}^{n}\times\{0\})}^{2}\\
 & +\tau\lim_{x_{n+1}\rightarrow0}\|e^{\tau\phi}x'\cdot\nabla'w\|_{L^{2}(\mathbb{R}^{n}\times\{0\})}^{2}+\tau\lim_{x_{n+1}\rightarrow0}\|e^{\tau\phi}x_{n+1}^{1-2s}\partial_{n+1}w\|_{L^{2}(\mathbb{R}^{n}\times\{0\})}^{2}\bigg)
\end{align*}
for all $\tau\ge\tau_{0}$. 
\end{lem}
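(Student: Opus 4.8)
The plan is to adapt the Carleman estimate of \cite[Proposition~5.7]{RS20} by treating the variable-coefficient operator $\sum_{j,k}a_{jk}\partial_j\partial_k$ as a perturbation of the Laplacian $\Delta'$ on $\mathbb{R}^n$. Write $\sum_{j,k}a_{jk}\partial_j\partial_k = \Delta' + \sum_{j,k}(a_{jk}-\delta_{jk})\partial_j\partial_k$, so that the equation becomes
\[
\bigl[\partial_{n+1}x_{n+1}^{1-2s}\partial_{n+1}+x_{n+1}^{1-2s}\Delta'\bigr]w = f - x_{n+1}^{1-2s}\sum_{j,k}(a_{jk}-\delta_{jk})\partial_j\partial_k w =: \tilde f.
\]
First I would invoke the constant-coefficient Carleman estimate of \cite{RS20} (or re-derive it via the standard conjugation $w \mapsto e^{\tau\phi}w$, splitting the conjugated operator into symmetric and antisymmetric parts, and computing the commutator using the specific convexity of $\phi$, which is designed so that $\phi$ is a degenerate-elliptic pseudoconvex weight) with the right-hand side $\tilde f$ in place of $f$. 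This produces the desired left-hand side controlled by $\|e^{\tau\phi}x_{n+1}^{(2s-1)/2}\tilde f\|_{L^2}^2$ plus the boundary terms already appearing in the statement.

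Next I would absorb the perturbation term. We have
\[
\|e^{\tau\phi}x_{n+1}^{\frac{2s-1}{2}}(\tilde f - f)\|_{L^2(\mathbb{R}_+^{n+1})}^2 \le C\epsilon^2 \|e^{\tau\phi}x_{n+1}^{\frac{1-2s}{2}}\nabla'(\nabla' w)\|_{L^2(\mathbb{R}_+^{n+1})}^2 + (\text{lower order}),
\]
using \eqref{eq:decay} via the hypothesis $\|a_{jk}-\delta_{jk}\|_\infty \le \epsilon$. The crucial point, flagged in the paper's discussion of ``Main difficulties: Carleman estimates'', is that the second-order term that naturally appears is $\tilde\nabla(\nabla' w)$, i.e. a full $(n{+}1)$-gradient of the tangential gradient, rather than $(\nabla')^2 w$ alone; to control this I would run the Carleman estimate not just for $w$ but simultaneously for the tangential derivatives $\partial_\ell w$, $\ell = 1,\dots,n$. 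Differentiating the equation in $x_\ell$ gives
\[
\bigl[\partial_{n+1}x_{n+1}^{1-2s}\partial_{n+1}+x_{n+1}^{1-2s}\Delta'\bigr]\partial_\ell w = \partial_\ell f - x_{n+1}^{1-2s}\sum_{j,k}(\partial_\ell a_{jk})\partial_j\partial_k w - x_{n+1}^{1-2s}\sum_{j,k}(a_{jk}-\delta_{jk})\partial_j\partial_k\partial_\ell w,
\]
and here the bounds $\|\nabla' a_{jk}\|_\infty \le \epsilon$ and (for $s\ne \tfrac12$) $\|(\nabla')^2 a_{jk}\|_\infty \le C$ enter. Summing the Carleman estimates for $w$ and for all $\partial_\ell w$, and choosing $\tau$ large enough and $\epsilon$ small enough, the gradient terms $\tau\|e^{\tau\phi}x_{n+1}^{(1-2s)/2}\tilde\nabla(\nabla' w)\|_{L^2}^2$ produced by the $\partial_\ell w$ estimates dominate the perturbation contributions, which can then be absorbed into the left-hand side. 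The idea of \cite{Reg97} is precisely this device of iterating the estimate over derivatives to close the loop on the remainder.

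The main obstacle will be the bookkeeping at the degenerate boundary $\{x_{n+1}=0\}$: the weight $x_{n+1}^{1-2s}$ is singular for $s>\tfrac12$, and differentiating the equation tangentially interacts with the boundary terms $\|e^{\tau\phi}\Delta' w\|_{L^2}$, $\|e^{\tau\phi}x'\cdot\nabla' w\|_{L^2}$, $\|e^{\tau\phi}x_{n+1}^{1-2s}\partial_{n+1}w\|_{L^2}$; one must check that the tangential-derivative estimates do not generate boundary terms worse than these. This is where the assumption $\mathrm{supp}(w)\subset B_{1/2}^+$ and the explicit form of $\phi$ (vanishing gradient structure at the origin, the factor $2$ and the combination $-\tfrac{1}{2-2s}x_{n+1}^{2-2s}+\tfrac12 x_{n+1}^2$ chosen so the Neumann-type boundary contribution has a favorable sign) are used. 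I would handle the finiteness hypotheses on $\Delta' w$, $\nabla' w$, and $x_{n+1}^{1-2s}\partial_{n+1}w$ at the boundary to justify all integrations by parts, then carry out the commutator computation carefully keeping the signs of the boundary integrals, and finally combine everything. The case $s=\tfrac12$ is cleaner since the weight degenerates to $1$ and the extra regularity assumption on $a_{jk}$ is not needed, which matches the remark in the paper.
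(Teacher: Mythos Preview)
Your approach has a genuine gap: the iteration over tangential derivatives does not close. When you apply the constant-coefficient Carleman estimate to $\partial_\ell w$, the right-hand side picks up $\|e^{\tau\phi}x_{n+1}^{(2s-1)/2}\partial_\ell f\|_{L^2}^2$, which is not controlled by the hypotheses (only $x_{n+1}^{(2s-1)/2}f\in L^2$ is assumed, not its tangential derivatives), and a third-order term $\epsilon^2\|e^{\tau\phi}x_{n+1}^{(1-2s)/2}\partial_j\partial_k\partial_\ell w\|_{L^2}^2$. The left-hand side of the $\partial_\ell w$-estimate gives at most $\tau\|e^{\tau\phi}x_{n+1}^{(1-2s)/2}\nabla\partial_\ell w\|_{L^2}^2$, which is second order; there is nothing to absorb the third-order contribution, and iterating further only pushes the problem up. This is not what \cite{Reg97} does.

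The paper's route is different and avoids this loss of derivatives. It does \emph{not} put the perturbation on the right-hand side and iterate. Instead, after conjugating by $e^{\tau\phi}$ and the weight $x_{n+1}^{(1-2s)/2}$, it keeps the perturbation inside the operator, writing $L^{+}=S+A+(I)+(II)+(III)$ with $(I),(II),(III)$ the $(a_{jk}-\delta_{jk})$-terms, and introduces the companion $L^{-}=S-A+(I)-(II)+(III)$. The Regbaoui device is to use \emph{both} the difference $\mathscr{D}=\|L^{+}u\|^2-\|L^{-}u\|^2$ (which yields the commutator $[S,A]$) \emph{and} the sum $\mathscr{S}=\|L^{+}u\|^2+\|L^{-}u\|^2\ge 2\|Su\|^2+\cdots$; expanding $\|Su\|^2$ produces the second-order control $\|\nabla(\nabla' u)\|^2$ internally, so the $\epsilon$-small second-derivative terms from $(I)$ are absorbed without ever differentiating the equation. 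A further structural point you would miss by treating the perturbation as a pure right-hand side: in the cross term $\langle Au,(I)u\rangle$ the singular contribution $\tau\langle(\partial_{n+1}^2\phi)\partial_j u,(a_{jk}-\delta_{jk})\partial_k u\rangle$ (singular because $\partial_{n+1}^2\phi\sim x_{n+1}^{-2s}$ for $s>\tfrac12$) cancels between two integrations by parts, and this cancellation is exactly where the bound on $(\nabla')^2 a_{jk}$ enters for $s\neq\tfrac12$ and is not needed for $s=\tfrac12$.
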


\begin{proof}
Now we prove the Carleman estimate for $s\in(\frac{1}{2},1)$, as the case $s=\frac{1}{2}$ is naturally included in our estimates.

\textbf{Step 1: Conjugation.} Let $\tilde{u}=x_{n+1}^{\frac{1-2s}{2}}w$, we have 
\[
x_{n+1}^{\frac{2s-1}{2}}f = \Delta\tilde{u} + \mathring{c}_{s} x_{n+1}^{-2}\tilde{u}+\sum_{j,k=1}^{n}(a_{jk}-\delta_{jk})\partial_{j}\partial_{k}\tilde{u},
\]
where $\mathring{c}_{s} = \frac{1-4s^{2}}{4}$. Let $u=e^{\tau\phi}\tilde{u}$, we have 
\begin{align*}
e^{\tau\phi}x_{n+1}^{\frac{2s-1}{2}}f= & \bigg[\Delta+\tau^{2}|\nabla\phi|^{2} + \mathring{c}_{s} x_{n+1}^{-2}-\tau\Delta\phi-2\tau\nabla\phi\cdot\nabla\bigg]u\\
 & +\sum_{j,k=1}^{n}(a_{jk}-\delta_{jk})\partial_{j}\partial_{k}u -\tau\sum_{j,k=1}^{n}(a_{jk}-\delta_{jk})\bigg[(\partial_{k}\phi)\partial_{j}+(\partial_{j}\phi)\partial_{k}\bigg]u\\
 & +\sum_{j,k=1}^{n}(a_{jk}-\delta_{jk})\bigg[\tau^{2}(\partial_{k}\phi)(\partial_{j}\phi)-\tau(\partial_{j}\partial_{k}\phi)\bigg]u.
\end{align*}
We write $L^{+}=S+A+(I)+(II)+(III)$, where 
\begin{align*}
S & =\Delta+\tau^{2}|\nabla\phi|^{2} + \mathring{c}_{s} x_{n+1}^{-2}, \quad A = -2\tau\nabla\phi\cdot\nabla-\tau\Delta\phi,\\
(I) & =\sum_{j,k=1}^{n}(a_{jk}-\delta_{jk})\partial_{j}\partial_{k}\\
(II) & =-\tau\sum_{j,k=1}^{n}(a_{jk}-\delta_{jk})\bigg[(\partial_{k}\phi)\partial_{j}+(\partial_{j}\phi)\partial_{k}\bigg]\\
(III) & =\sum_{j,k=1}^{n}(a_{jk}-\delta_{jk})\bigg[\tau^{2}(\partial_{k}\phi)(\partial_{j}\phi)-\tau(\partial_{j}\partial_{k}\phi)\bigg].
\end{align*}
We now define $L^{-}:=S-A+(I)-(II)+(III)$, 
\[
\mathscr{D}:=\|L^{+}u\|^{2}-\|L^{-}u\|^{2} \quad \text{and} \quad \mathscr{S} :=  \|L^{+}u\|^{2}+\|L^{-}u\|^{2},
\]
where
\begin{align*}
	\|\bullet\| =\|\bullet\|_{L^{2}(\mathbb{R}_{+}^{n+1})}, & \qquad \|\bullet\|_{0} =\|\bullet\|_{L^{2}(\mathbb{R}^{n}\times\{0\})}\\
	\langle\bullet,\bullet\rangle =\langle\bullet,\bullet\rangle_{L^{2}(\mathbb{R}_{+}^{n+1})}, & \qquad \langle\bullet,\bullet\rangle_{0} =\langle\bullet,\bullet\rangle_{L^{2}(\mathbb{R}^{n}\times\{0\})}
\end{align*}
and we omit the notations ``$\lim_{x_{n+1}\rightarrow0}$'' in $\|\bullet\|_{0}$
and $\langle\bullet,\bullet\rangle_{0}$. 

\textbf{Step 2: Estimating the bulk contributions.}

\textbf{Step 2.1: Estimating the difference $\mathscr{D}$.} Observe that $\mathscr{D}=4\langle Su,Au\rangle+R$, where 
\[
R=4\langle Su,(II)u\rangle+4\langle Au,(I)u\rangle+4\langle Au,(III)u\rangle+4\langle(I)u,(II)u\rangle+4\langle(II)u,(III)u\rangle.
\]

\textbf{Step 2.1.1: Computation the principal term.} Note that 
	\begin{equation}
		2\langle Su,Au\rangle=\langle[S,A]u,u\rangle+2\tau\langle Su,(\partial_{n+1}\phi)u\rangle_{0}-\langle Au,\partial_{n+1}u\rangle_{0}+\langle\partial_{n+1}(Au),u\rangle_{0}.\label{eq:0Carl1}
	\end{equation}
	Observe that $[S,A]=[S,A]_{1}+[S,A]_{2}$, where 
	\begin{align*}
		[S,A]_{1}= & [\Delta'+\tau^{2}|\nabla'\phi|^{2},-2\tau\nabla'\phi\cdot\nabla'-\tau\Delta'\phi],\\{}
		[S,A]_{2}= & \bigg[\partial_{n+1}^{2}+\tau^{2}(\partial_{n+1}\phi)^{2}+\frac{1-4s^{2}}{4}x_{n+1}^{-2},-2\tau\partial_{n+1}\phi\partial_{n+1}-\tau\partial_{n+1}^{2}\phi\bigg].
	\end{align*}
The following identity can be found in \cite[equation~(5.20) of Proposition~5.7]{RS20Calderon}: 
\begin{equation}
	\langle[S,A]_{1}u,u\rangle=-\frac{1}{2}\tau^{3}\||x'|u\|^{2}-2\tau\|\nabla'u\|^{2}.\label{eq:0Carl2}
\end{equation}
For our purpose, we need to refine the estimate \cite[equation~(5.22) of Proposition~5.7]{RS20Calderon}. The following identity can be found in \cite[equation~(5.19) of Proposition~5.7]{RS20Calderon}: 
\begin{align*}
	\langle[S,A]_{2}u,u\rangle= & 4\tau^{3}\langle u,(\partial_{n+1}\phi)^{2}(\partial_{n+1}^{2}\phi)u\rangle+4\tau\langle\partial_{n+1}u,(\partial_{n+1}^{2}\phi)\partial_{n+1}u\rangle\\
	& -\tau\langle u,(\partial_{n+1}^{4}\phi)u\rangle -4\mathring{c}_{s} \tau\langle u,x_{n+1}^{-3}(\partial_{n+1}\phi)u\rangle\\
	& +4\tau\langle(\partial_{n+1}^{2}\phi)\partial_{n+1}u,u\rangle_{0}.
\end{align*}
From \cite[equation~(5.21) of Proposition~5.7]{RS20Calderon}, we have 
\[
(\partial_{n+1}\phi)^{2}(\partial_{n+1}^{2}\phi)=8(x_{n+1}^{1-2s}-x_{n+1})^{2}((2s-1)x_{n+1}^{-2s}+1)
\]
and
\begin{align*}
	& -\tau\langle u,(\partial_{n+1}^{4}\phi)u\rangle+(2s+1)(2s-1)\tau\langle u,x_{n+1}^{-3}(\partial_{n+1}\phi)u\rangle\\
	= & 2\tau(1-2s)(1+2s)^{2}\|x_{n+1}^{-1-s}u\|^{2} -8 \tau \mathring{c}_{s} \|x_{n+1}^{-1}u\|^{2}.
\end{align*}
Hence, we have 
\begin{align}
	\langle[S,A]_{2}u,u\rangle= & 32\tau^{3}\|(x_{n+1}^{1-2s}-x_{n+1})u\|^{2}+32(2s-1)\tau^{3}\|(x_{n+1}^{1-2s}-x_{n+1})x_{n+1}^{-s}u\|^{2}\nonumber \\
	& +8\tau(2s-1)\|x_{n+1}^{-s}\partial_{n+1}u\|^{2} +2\tau(1-2s)(1+2s)^{2}\|x_{n+1}^{-1-s}u\|^{2}\nonumber \\
	& +8\tau\|\partial_{n+1}u\|^{2} -8 \tau \mathring{c}_{s} \|x_{n+1}^{-1}u\|^{2}\nonumber \\
	& +4\tau\langle(\partial_{n+1}^{2}\phi)\partial_{n+1}u,u\rangle_{0}.\label{eq:0Carl3}
\end{align}
Combining \eqref{eq:0Carl1}, \eqref{eq:0Carl2} and \eqref{eq:0Carl3},
we reach 
\begin{align}
4\langle Su,Au\rangle= & 64\tau^{3}\|(x_{n+1}^{1-2s}-x_{n+1})u\|^{2}+64(2s-1)\tau^{3}\|(x_{n+1}^{1-2s}-x_{n+1})x_{n+1}^{-s}u\|^{2}\nonumber \\
 & +16\tau(2s-1)\|x_{n+1}^{-s}\partial_{n+1}u\|^{2}+16\tau\|\partial_{n+1}u\|^{2} -8 \tau \mathring{c}_{s} \|x_{n+1}^{-1}u\|^{2}\nonumber \\
 & +4\tau(1-2s)(1+2s)^{2}\|x_{n+1}^{-1-s}u\|^{2}-\tau^{3}\||x'|u\|^{2}-4\tau\|\nabla'u\|^{2}\nonumber \\
 & +8\tau\langle(\partial_{n+1}^{2}\phi)\partial_{n+1}u,u\rangle_{0}\nonumber \\
 & +4\tau\langle Su,(\partial_{n+1}\phi)u\rangle_{0}-2\langle Au,\partial_{n+1}u\rangle_{0}+2\langle\partial_{n+1}(Au),u\rangle_{0}.\label{eq:0Carl4}
\end{align}

\textbf{Step 2.1.2: Estimating the remainder.} Using integration by parts, we can estimate $R$ from below: 
\begin{align*}
R\ge & -C\epsilon\bigg[\tau\|(x_{n+1}^{1-2s}-x_{n+1})\nabla'u\|^{2}+\tau\|\partial_{n+1}u\|^{2}+\tau^{3}\|(x_{n+1}^{1-2s}-x_{n+1})x_{n+1}^{-s}u\|^{2}+\tau\|x_{n+1}^{-1}u\|^{2}\\
 & +\tau\|x_{n+1}^{\frac{1-2s}{2}}\partial_{n+1}u\|_{0}^{2}+\tau\|x_{n+1}^{\frac{2s-1}{2}}|x'|\nabla'u\|_{0}^{2}+\tau^{3}\|(x_{n+1}^{1-2s}-x_{n+1})^{\frac{1}{2}}u\|_{0}^{2}\bigg].
\end{align*}
Here we would like to highlight some features when estimating the
second term of $R$, that is, $\langle Au,(I)u\rangle$. Note that
\begin{align}
 & \langle-2\tau\partial_{n+1}\phi\partial_{n+1}u,(a_{jk}-\delta_{jk})\partial_{j}\partial_{k}u\rangle\nonumber \\
= & \tau\langle\partial_{n+1}\phi\partial_{n+1}u,(\partial_{j}a_{jk})\partial_{k}u\rangle - \boxed{\tau\langle\partial_{n+1}^{2}\phi\partial_{j}u,(a_{jk}-\delta_{jk})\partial_{k}u\rangle} \nonumber \\
 & +\tau\langle\partial_{n+1}\phi\partial_{j}u,(\partial_{k}a_{jk})\partial_{n+1}u\rangle-\tau\langle\partial_{n+1}\phi\partial_{j}u,(a_{jk}-\delta_{jk})\partial_{k}u\rangle_{0}\label{eq:harm1}
\end{align}
and 
\begin{align}
 & \langle-\tau(\partial_{n+1}^{2}\phi)u,(a_{jk}-\delta_{jk})\partial_{j}\partial_{k}u\rangle\nonumber \\
= & \tau\langle(\partial_{n+1}^{2}\phi)u,(\partial_{j}a_{jk})\partial_{k}u\rangle+\tau\langle\partial_{n+1}^{2}\phi\partial_{j}u,(a_{jk}-\delta_{jk})\partial_{k}u\rangle\label{eq:harm2-1}\\
= & -\frac{\tau}{2}\langle(\partial_{n+1}^{2}\phi)u,(\partial_{j}\partial_{k}a_{jk})u\rangle + \boxed{\tau\langle\partial_{n+1}^{2}\phi\partial_{j}u,(a_{jk}-\delta_{jk})\partial_{k}u\rangle}.\label{eq:harm2-2}
\end{align}
So, summing up \eqref{eq:harm1} and \eqref{eq:harm2-2}, we note
that the problematic term $\tau\langle\partial_{n+1}^{2}\phi\partial_{j}u,(a_{jk}-\delta_{jk})\partial_{k}u\rangle$
is canceled. It problematic because $\partial_{n+1}^{2}\phi$ has singularity $x_{n+1}^{-2s}$ for $s\in(1/2,1)$. However, when
$s=\frac{1}{2}$, $\partial_{n+1}^{2}\phi$ has no singularity. In
this case, we consider \eqref{eq:harm2-1} rather than \eqref{eq:harm2-2}.
This is the reason why we can loosen the second derivative assumption
for the case $s=\frac{1}{2}$. 

\textbf{Step 2.1.3: Combining the commutator and the remainder.} Using the Hardy inequality in Lemma \ref{lem:Hardy}, we reach 
\[
\|x_{n+1}^{-s-1}u\|^{2}\le\frac{4}{(2s+1)^{2}}\|x_{n+1}^{-s}\partial_{n+1}u\|^{2}+\frac{2}{2s+1}\|x_{n+1}^{-\frac{1}{2}-s}u\|_{0}^{2},
\]
thus 
\begin{align*}
 & 16\tau(2s-1)\|x_{n+1}^{-s}\partial_{n+1}u\|^{2}-4\tau(2s-1)(2s+1)^{2}\|x_{n+1}^{-1-s}u\|^{2}\\
\ge & -8\tau(2s-1)(2s+1)\|x_{n+1}^{-\frac{1}{2}-s}u\|_{0}^{2}.
\end{align*}
Therefore, choosing sufficiently small $\epsilon>0$, we reach
\begin{align}
\mathscr{D}\ge & 64\tau^{3}\|(x_{n+1}^{1-2s}-x_{n+1})u\|^{2}+\frac{639}{10}(2s-1)\tau^{3}\|(x_{n+1}^{1-2s}-x_{n+1})x_{n+1}^{-s}u\|^{2}\nonumber \\
 & +\frac{159}{10}\tau\|\partial_{n+1}u\|^{2}+\frac{39}{10}\tau(2s-1)(2s+1)\|x_{n+1}^{-1}u\|^{2}-4\tau\|\nabla'u\|^{2}\nonumber \\
 & -C\epsilon\tau\|(x_{n+1}^{1-2s}-x_{n+1})\nabla'u\|^{2}+8\tau\langle(\partial_{n+1}^{2}\phi)\partial_{n+1}u,u\rangle_{0}+4\tau\langle Su,\partial_{n+1}\phi\rangle_{0}\nonumber \\
 & -2\langle Au,\partial_{n+1}u\rangle_{0}+2\langle\partial_{n+1}(Au),u\rangle_{0}-\tau\|x_{n+1}^{\frac{1-2s}{2}}\partial_{n+1}u\|_{0}^{2}-\tau\|x_{n+1}^{\frac{2s-1}{2}}|x'|\nabla'u\|_{0}^{2}\nonumber \\
 & -\tau^{3}\|(x_{n+1}^{1-2s}-x_{n+1})^{\frac{1}{2}}u\|_{0}^{2}-8\tau(2s-1)(2s+1)\|x_{n+1}^{-\frac{1}{2}-s}u\|_{0}^{2}.\label{eq:0Carl5}
\end{align}

\textbf{Step 2.2: Estimating the sum $\mathscr{S}$.} Observe that  
\[
\mathscr{S} \ge 2\|Su\|^{2}+2\|Au\|^{2}-C\epsilon\bigg[\sum_{j,k=1}^{n}\|\partial_{j}\partial_{k}u\|^{2}+\tau^{2}\|\nabla'u\|^{2}+\tau^{4}\|u\|^{2}\bigg].
\]
Since $\mathring{c}_{s} < 0$, then  
\begin{align*}
2\|Su\|^{2}= & 2\| \Delta'u + (\partial_{n+1}^{2}u+\tau^{2}|\nabla\phi|^{2}u + \mathring{c}_{s} x_{n+1}^{-2}u ) \|^{2}\\
= & 2\|\Delta'u\|^{2}+4\langle\Delta'u,\partial_{n+1}^{2}u\rangle+4\tau^{2}\langle\Delta'u,|\nabla\phi|^{2}u\rangle \\
 & + 4\mathring{c}_{s} \langle\Delta'u,x_{n+1}^{-2}u\rangle +2 \| \partial_{n+1}^{2}u+\tau^{2}|\nabla\phi|^{2}u + \mathring{c}_{s} x_{n+1}^{-2}u \|^{2}\\
= & 2\sum_{j,k=1}^{n}\|\partial_{j}\partial_{k}u\|^{2}+4\langle\Delta'u,\partial_{n+1}^{2}u\rangle+4\tau^{2}\langle\Delta'u,|\nabla\phi|^{2}u\rangle\\
 & -4\mathring{c}_{s} \langle\nabla'u,x_{n+1}^{-2}\nabla'u\rangle +2 \| \partial_{n+1}^{2}u+\tau^{2}|\nabla\phi|^{2}u + \mathring{c}_{s} x_{n+1}^{-2}u \|^{2}\\
\ge & 2\sum_{j,k=1}^{n}\|\partial_{j}\partial_{k}u\|^{2}+4\langle\Delta'u,\partial_{n+1}^{2}u\rangle+4\tau^{2}\langle\Delta'u,|\nabla\phi|^{2}u\rangle.
\end{align*}
Since 
\[
4\langle\Delta'u,\partial_{n+1}^{2}u\rangle=4\langle\nabla'\partial_{n+1}u,\nabla'\partial_{n+1}u\rangle-4\langle\Delta'u,\partial_{n+1}u\rangle_{0}
\]
and for $\epsilon_{0}>0$, we have 
\begin{align*}
 & 4\tau^{2}\langle\Delta'u,|\nabla\phi|^{2}u\rangle\\
= & \tau^{2}\langle\Delta'u,|x'|^{2}u\rangle+16\tau^{2}\langle\Delta'u,(x_{n+1}^{1-2s}-x_{n+1})^{2}u\rangle\\
\ge & -\tau^{2}(1+\epsilon_{0})\|\nabla'u\|^{2}-\tau^{2}C\epsilon_{0}^{-1}\|u\|^{2}-16\tau^{2}\|(x_{n+1}^{1-2s}-x_{n+1})\nabla'u\|^{2}.
\end{align*}
Thus, 
\begin{align}
\mathscr{S}\ge & 2\|Su\|^{2}+2\|Au\|^{2}-C\epsilon\bigg[\sum_{j,k=1}^{n}\|\partial_{j}\partial_{k}u\|^{2}+\tau^{2}\|\nabla'u\|^{2}+\tau^{4}\|u\|^{2}\bigg]\nonumber \\
\ge & 2\|\nabla(\nabla'u)\|^{2}-\tau^{2}(1+\epsilon_{0})\|\nabla'u\|^{2}-\tau^{2}C\epsilon_{0}^{-1}\|u\|^{2}-16\tau^{2}\|(x_{n+1}^{1-2s}-x_{n+1})\nabla'u\|^{2}\nonumber \\
 & -C\epsilon\bigg[\sum_{j,k=1}^{n}\|\partial_{j}\partial_{k}u\|^{2}+\tau^{2}\|\nabla'u\|^{2}+\tau^{4}\|u\|^{2}\bigg]-4\langle\Delta'u,\partial_{n+1}u\rangle_{0}.\label{eq:0Carl6}
\end{align}

\textbf{Step 2.3: Combining the difference $\mathscr{D}$ and the sum $\mathscr{S}$.} After combining \eqref{eq:0Carl5} and \eqref{eq:0Carl6}, we choose small $\epsilon>0$, and consequently choose small $\epsilon_{0}>0$ and large $\tau$, hence 
\begin{align}
 & \bigg(\tau+s+\frac{1}{2}\bigg)\|L^{+}u\|^{2}\nonumber \\
\ge & \frac{9}{10}(2s-1)\|\nabla(\nabla'u)\|^{2}+\|Su\|^{2}+64\tau^{4}\|(x_{n+1}^{1-2s}-x_{n+1})u\|^{2}\nonumber \\
 & +\frac{639}{10}(2s-1)\tau^{4}\|(x_{n+1}^{1-2s}-x_{n+1})x_{n+1}^{-s}u\|^{2}+\frac{159}{10}\tau^{2}\|\partial_{n+1}u\|^{2}-4\tau^{2}\|\nabla'u\|^{2}\nonumber \\
 & -\frac{171}{20}(2s-1)\tau^{2}\|(x_{n+1}^{1-2s}-x_{n+1})\nabla'u\|^{2}+\frac{39}{10}\tau^{2}(2s-1)(2s+1)\|x_{n+1}^{-1}u\|^{2}\nonumber \\
 & +8\tau^{2}\langle(\partial_{n+1}^{2}\phi)\partial_{n+1}u,u\rangle_{0}+4\tau^{2}\langle Su,\partial_{n+1}\phi\rangle_{0}-2\tau\langle Au,\partial_{n+1}u\rangle_{0}+2\tau\langle\partial_{n+1}(Au),u\rangle_{0}\nonumber \\
 & -\tau^{2}\|x_{n+1}^{\frac{1-2s}{2}}\partial_{n+1}u\|_{0}^{2}-\tau^{2}\|x_{n+1}^{\frac{2s-1}{2}}|x'|\nabla'u\|_{0}^{2}-\tau^{4}\|(x_{n+1}^{1-2s}-x_{n+1})^{\frac{1}{2}}u\|_{0}^{2}\nonumber \\
 & -8\tau^{2}(2s-1)(2s+1)\|x_{n+1}^{-\frac{1}{2}-s}u\|_{0}^{2}-2(2s-1)\langle\Delta'u,\partial_{n+1}u\rangle_{0}.\label{eq:0Carl7}
\end{align}
\textbf{Step 2.4: Obtaining gradient estimates.} Since ${\rm supp}(u)\subset B_{1/2}^{+}$ and $s>\frac{1}{2}$, thus
\[
0\le(x_{n+1}^{1-2s}-x_{n+1})x_{n+1}^{s}=x_{n+1}^{1-s}-x_{n+1}^{1+s} \le x_{n+1}^{1-s} \le 1,
\]
and hence 
\begin{align*}
 & \frac{172}{20}(2s-1)\tau^{2}\|(x_{n+1}^{1-2s}-x_{n+1})\nabla'u\|^{2}\\
= & -\frac{172}{20}(2s-1)\tau^{2}\langle(x_{n+1}^{1-2s}-x_{n+1})x^{s}\Delta'u,(x_{n+1}^{1-2s}-x_{n+1})x^{-s}u\rangle\\
\le & \frac{86}{20}(2s-1)\delta\|(x_{n+1}^{1-2s}-x_{n+1})x^{s}\Delta'u\|^{2}+\frac{86}{20}(2s-1)\tau^{4}\delta^{-1}\|(x_{n+1}^{1-2s}-x_{n+1})x^{-s}u\|^{2}\\
\le & \frac{86}{20}(2s-1)\delta\|\Delta'u\|^{2}+\frac{86}{20}(2s-1)\tau^{4}\delta^{-1}\|(x_{n+1}^{1-2s}-x_{n+1})x^{-s}u\|^{2}.
\end{align*}
Choose $\delta=\frac{8}{43}$, we reach 
\begin{align}
 & \frac{172}{20}(2s-1)\tau^{2}\|(x_{n+1}^{1-2s}-x_{n+1})\nabla'u\|^{2}\nonumber \\
\le & \frac{8}{10}(2s-1)\|\Delta'u\|^{2}+23.1125(2s-1)\tau^{4}\|(x_{n+1}^{1-2s}-x_{n+1})x^{-s}u\|^{2}.\label{eq:0Carl8}
\end{align}
Moreover, we have 
\begin{align}
 & \frac{41}{10}\tau^{2}\langle Su,u\rangle \nonumber \\
= & \frac{41}{10}\tau^{2}\|\nabla u\|^{2}-\frac{41}{10}\tau^{4}\||\nabla\phi|u\|^{2} + \frac{41}{5}(2s+1)(2s-1)\tau\|x_{n+1}^{-1}u\|^{2} +\frac{41}{10}\tau^{2}\langle\partial_{n+1}u,u\rangle_{0} \nonumber \\
\ge & \frac{41}{10}\tau^{2}\|\nabla u\|^{2}-\frac{41}{10}\tau^{4}\bigg(\frac{1}{16}\|u\|^{2}+4\|(x_{n+1}^{1-2s}-x_{n+1})u\|^{2}\bigg)  \nonumber \\
 & + \frac{41}{5}(2s+1)(2s-1)\tau\|x_{n+1}^{-1}u\|^{2} + \frac{41}{10}\tau^{2}\langle\partial_{n+1}u,u\rangle_{0} \nonumber \\
= & \frac{41}{10}\tau^{2} \| \nabla u \|^{2} - \frac{41}{160} \tau^{4} \|u\|^{2} - \frac{164}{10} \| (x_{n+1}^{1-2s} - x_{n+1}) u \|^{2} + \frac{41}{5}(2s+1)(2s-1)\tau\|x_{n+1}^{-1}u\|^{2} \nonumber \\
 & + \frac{41}{5}(2s+1)(2s-1)\tau\|x_{n+1}^{-1}u\|^{2} + \frac{41}{10}\tau^{2}\langle\partial_{n+1}u,u\rangle_{0} \label{eq:rev1-1} 
\end{align}
Define $\psi_{s}(x_{n+1}) := x_{n+1}^{1-2s} - x_{n+1}$. Since ${\rm supp}\,(u) \subset B_{1/2}^{+}$, so $0 \le x_{n+1} \le 1/2$, for $s \in (1/2,1)$, the derivative can be easily estimated 
\[
\psi_{s}'(x_{n+1}) = (1-2s)x_{n+1}^{-2s} - 1 < 0 \quad \text{for } 0 \le x_{n+1} \le 1/2.
\]
Since $\psi_{s}(x_{n+1})$ is decreasing on $[0,1/2]$, for $s \in (1/2,1)$, 
\[
\inf_{0 \le x_{n+1} \le 1/2} (x_{n+1}^{1-2s} - x_{n+1}) = \inf_{0 \le x_{n+1} \le 1/2} \psi_{s}(x_{n+1}) = \psi_{s}\bigg( \frac{1}{2} \bigg) = \frac{1}{2} (4^{s} - 1) \ge \frac{1}{2}.
\]
Combining this with \eqref{eq:rev1-1}, we reach the estimate 
\begin{align*}
 & \frac{41}{10}\tau^{2}\|\nabla'u\|^{2}+\frac{41}{5}(2s+1)(2s-1)\tau^{2}\|x_{n+1}^{-1}u\|^{2}+\frac{41}{10}\tau^{2}\langle\partial_{n+1}u,u\rangle_{0}\\
\le & \frac{41}{10}\tau^{2}\|\nabla u\|^{2}+\frac{41}{5}(2s+1)(2s-1)\tau^{2}\|x_{n+1}^{-1}u\|^{2}+\frac{41}{10}\tau^{2}\langle\partial_{n+1}u,u\rangle_{0}\\
\le & \frac{41}{10}\tau^{2}\langle Su,u\rangle+\frac{41}{160}\tau^{4}\|u\|^{2}+\frac{164}{10}\tau^{4}\|(x_{n+1}^{1-2s}-x_{n+1})u\|^{2}\\
\le & \frac{41}{20}\delta\|Su\|^{2}+\frac{41}{20}\delta^{-1}\tau^{4}\|u\|^{2}+\frac{41}{160}\tau^{4}\|u\|^{2}+\frac{164}{10}\tau^{4}\|(x_{n+1}^{1-2s}-x_{n+1})u\|^{2}\\
\le & \frac{41}{20}\delta\|Su\|^{2}+\frac{82}{10}\delta^{-1}\tau^{4}\|(x_{n+1}^{1-2s}-x_{n+1})u\|^{2}+\frac{41}{40}\tau^{4}\|(x_{n+1}^{1-2s}-x_{n+1})u\|^{2}\\
 & +\frac{164}{10}\tau^{4}\|(x_{n+1}^{1-2s}-x_{n+1})u\|^{2}.
\end{align*}
Choosing $\delta=\frac{20}{41}$, hence 
\begin{align}
 & \frac{41}{10}\tau^{2}\|\nabla'u\|^{2}+\frac{41}{5}(2s+1)(2s-1)\tau^{2}\|x_{n+1}^{-1}u\|^{2}+\frac{41}{10}\tau^{2}\langle\partial_{n+1}u,u\rangle_{0}\nonumber \\
\le & \|Su\|^{2}+34.235\tau^{4}\|(x_{n+1}^{1-2s}-x_{n+1})u\|^{2}.\label{eq:0Carl9}
\end{align}

\textbf{Step 2.5: Plugging gradient estimates into \eqref{eq:0Carl7}.} Combining \eqref{eq:0Carl7}, \eqref{eq:0Carl8} and \eqref{eq:0Carl9},
we reach 
\begin{align}
 & \bigg(\tau+s+\frac{1}{2}\bigg)\|L^{+}u\|^{2}\nonumber \\
\ge & \frac{1}{10}(2s-1)\|\nabla(\nabla'u)\|^{2}+29.765\tau^{4}\|(x_{n+1}^{1-2s}-x_{n+1})u\|^{2}\nonumber \\
 & +40.7875(2s-1)\tau^{4}\|(x_{n+1}^{1-2s}-x_{n+1})x_{n+1}^{-s}u\|^{2}+\frac{159}{10}\tau^{2}\|\partial_{n+1}u\|^{2}+\frac{1}{10}\tau^{2}\|\nabla'u\|^{2}\nonumber \\
 & +\frac{1}{20}(2s-1)\tau^{2}\|(x_{n+1}^{1-2s}-x_{n+1})\nabla'u\|^{2}+12.1\tau^{2}(2s-1)(2s+1)\|x_{n+1}^{-1}u\|^{2}\nonumber \\
 & +8\tau^{2}\langle(\partial_{n+1}^{2}\phi)\partial_{n+1}u,u\rangle_{0}+4\tau^{2}\langle Su,(\partial_{n+1}\phi)u\rangle_{0}-2\tau\langle Au,\partial_{n+1}u\rangle_{0}+2\tau\langle\partial_{n+1}(Au),u\rangle_{0}\nonumber \\
 & -\tau^{2}\|x_{n+1}^{\frac{1-2s}{2}}\partial_{n+1}u\|_{0}^{2}-\tau^{2}\|x_{n+1}^{\frac{2s-1}{2}}|x'|\nabla'u\|_{0}^{2}-\tau^{4}\|(x_{n+1}^{1-2s}-x_{n+1})^{\frac{1}{2}}u\|_{0}^{2}\nonumber \\
 & -8\tau^{2}(2s-1)(2s+1)\|x_{n+1}^{-\frac{1}{2}-s}u\|_{0}^{2}-4(2s-1)\langle\Delta'u,\partial_{n+1}u\rangle_{0}+\frac{41}{10}\tau^{2}\langle\partial_{n+1}u,u\rangle_{0}.\label{eq:0Carl10}
\end{align}
Hence, we reach 
\begin{align}
 & 2\tau\|L^{+}u\|^{2}\nonumber \\
\ge & 25\tau^{4}\|(x_{n+1}^{1-2s}-x_{n+1})u\|^{2}+\frac{1}{10}\tau^{2}\|\nabla u\|^{2}+12\tau^{2}(2s-1)(2s+1)\|x_{n+1}^{-1}u\|^{2}\nonumber \\
 & +8\tau^{2}\langle(\partial_{n+1}^{2}\phi)\partial_{n+1}u,u\rangle_{0}+4\tau^{2}\langle Su,(\partial_{n+1}\phi)u\rangle_{0}-2\tau\langle Au,\partial_{n+1}u\rangle_{0}+2\tau\langle\partial_{n+1}(Au),u\rangle_{0}\nonumber \\
 & -\tau^{2}\|x_{n+1}^{\frac{1-2s}{2}}\partial_{n+1}u\|_{0}^{2}-\tau^{2}\|x_{n+1}^{\frac{2s-1}{2}}|x'|\nabla'u\|_{0}^{2}-\tau^{4}\|(x_{n+1}^{1-2s}-x_{n+1})^{\frac{1}{2}}u\|_{0}^{2}\nonumber \\
 & -8\tau^{2}(2s-1)(2s+1)\|x_{n+1}^{-\frac{1}{2}-s}u\|_{0}^{2}-4(2s-1)\langle\Delta'u,\partial_{n+1}u\rangle_{0}+\frac{41}{10}\tau^{2}\langle\partial_{n+1}u,u\rangle_{0}.\label{eq:0Carl11}
\end{align}
Since $u=e^{\tau\phi}x_{n+1}^{\frac{1-2s}{2}}w$, we estimate that
\begin{align*}
\|\nabla u\|^{2}\ge & \frac{1}{2}\|e^{\tau\phi}x_{n+1}^{\frac{1-2s}{2}}\nabla w\|^{2}-2\tau^{2}\|e^{\tau\phi}|\nabla\phi|x_{n+1}^{\frac{1-2s}{2}}w\|^{2}-2\bigg(\frac{2s-1}{2}\bigg)^{2}\|e^{\tau\phi}x_{n+1}^{-\frac{1+2s}{2}}w\|^{2}\\
\ge & \frac{1}{2}\|e^{\tau\phi}x_{n+1}^{\frac{1-2s}{2}}\nabla w\|^{2}-16\tau^{2}\|(x_{n+1}^{1-2s}-x_{n+1})u\|^{2}-(2s-1)^{2}\|x_{n+1}^{-1}u\|^{2}.
\end{align*}

\textbf{Step 3: Estimating the boundary contributions.} We want to show that 
\begin{equation}
\|e^{\tau\phi}x_{n+1}^{-2s}w\|_{0}\le C_{s}\|e^{\tau\phi}x_{n+1}^{1-2s}\partial_{n+1}w\|_{0}<\infty.\label{eq:0Carl12}
\end{equation}
Indeed, since $w(x',0)\equiv0$, thus 
\[
x_{n+1}^{-2s}w(x',x_{n+1})=x_{n+1}^{1-2s}\int_{0}^{1}\partial_{n+1}w(x',tx_{n+1})\,dt=\int_{0}^{1}(tx_{n+1})^{1-2s}\partial_{n+1}w(x',tx_{n+1})t^{2s-1}\,dt.
\]
Multiplying above equation by $e^{\tau\phi}$, taking the $L^{2}$-norm with respect
to $x'$ and using the fact that $\partial_{n+1}\phi<0$ on ${\rm supp}(w)$
gives 
\[
\|e^{\tau\phi}x_{n+1}^{-2s}w(\bullet,x_{n+1})\|_{0}\le\sup_{t\in(0,1)}\|e^{\tau\phi(\bullet,tx_{n+1})}(tx_{n+1})^{1-2s}\partial_{n+1}w(\bullet,tx_{n+1})\|_{0}\int_{0}^{1}t^{2s-1}\,dt.
\]
Taking $x_{n+1}\rightarrow0$ proves \eqref{eq:0Carl12}. 

We observe that 
\begin{align*}
 & 4\tau^{2}\langle Su,(\partial_{n+1}\phi)u\rangle_{0}-2\tau\langle Au,\partial_{n+1}u\rangle_{0}+2\tau\langle\partial_{n+1}(Au),u\rangle_{0}\\
= & 8\tau^{2}\langle\partial_{n+1}u,\nabla'\phi\cdot\nabla'u\rangle_{0}+4\tau^{2}\langle(\partial_{n+1}u)^{2},\partial_{n+1}\phi\rangle_{0}-4\tau^{2}\langle(\partial_{n+1}\phi),|\nabla'u|^{2}\rangle_{0}\\
 & +4\tau^{2}\langle(\Delta'\phi-\partial_{n+1}^{2}\phi)u,\partial_{n+1}u\rangle_{0}-2\tau^{2}\langle(\partial_{n+1}^{3}\phi)u,u\rangle_{0}+4\tau^{4}\langle(\partial_{n+1}\phi)|\nabla\phi|^{2}u,u\rangle_{0}\\
 & -\tau^{2}(2s+1)(2s-1)\langle x_{n+1}^{-2}u,(\partial_{n+1}\phi)u\rangle_{0}\\
\ge & 8\tau^{2}\langle\partial_{n+1}u,\nabla'\phi\cdot\nabla'u\rangle_{0}+4\tau^{2}\langle(\partial_{n+1}u)^{2},\partial_{n+1}\phi\rangle_{0}+4\tau^{2}\langle(\Delta'\phi-\partial_{n+1}^{2}\phi)u,\partial_{n+1}u\rangle_{0}\\
 & +4\tau^{4}\langle(\partial_{n+1}\phi)|\nabla\phi|^{2}u,u\rangle_{0}.
\end{align*}
Note that \eqref{eq:0Carl12} imply 
\begin{align*}
\partial_{n+1}u & =e^{\tau\phi}\bigg(x_{n+1}^{\frac{1-2s}{2}}\partial_{n+1}w-\frac{2s-1}{2}x_{n+1}^{-\frac{1+2s}{2}}w\bigg)+x_{n+1}^{\frac{3-2s}{2}}R\\
\nabla'u & =e^{\tau\phi}x_{n+1}^{\frac{1-2s}{2}}\nabla'w+x_{n+1}^{s+\frac{1}{2}}R',
\end{align*}
where $\|R\|_{0}\le C\tau$ and $\|R'\|_{0}\le C\tau$. 

Hence, 
\begin{align*}
 & |\langle\partial_{n+1}u,\nabla'\phi\cdot\nabla'u\rangle_{0}|\\
= & \bigg|\bigg\langle e^{\tau\phi}\bigg(x_{n+1}^{\frac{1-2s}{2}}\partial_{n+1}w-\frac{2s-1}{2}x_{n+1}^{-\frac{1+2s}{2}}w\bigg),e^{\tau\phi}x_{n+1}^{\frac{1-2s}{2}}\nabla'\phi\cdot\nabla'w\bigg\rangle_{0}\bigg|\\
= & \bigg|\bigg\langle e^{\tau\phi}\bigg(x_{n+1}^{1-2s}\partial_{n+1}w-\frac{2s-1}{2}x_{n+1}^{-2}w\bigg),\frac{1}{2}e^{\tau\phi}x'\cdot\nabla'w\bigg\rangle_{0}\bigg|\\
\le & \frac{1}{2}|\langle e^{\tau\phi}x_{n+1}^{1-2s}\partial_{n+1}w,e^{\tau\phi}x'\cdot\nabla'w\rangle_{0}|+\frac{2s-1}{4}|\langle e^{\tau\phi}x_{n+1}^{-2}w,e^{\tau\phi}x'\cdot\nabla'w\rangle_{0}|.
\end{align*}
Using \eqref{eq:0Carl12}, we reach 
\[
|\langle\partial_{n+1}u,\nabla'\phi\cdot\nabla'u\rangle_{0}|\le\|e^{\tau\phi}x_{n+1}^{1-2s}\partial_{n+1}w\|_{0}\|e^{\tau\phi}x'\cdot\nabla'w\|_{0}.
\]
Similarly, using \eqref{eq:0Carl12}, we have 
\begin{align*}
|\langle(\partial_{n+1}u)^{2},\partial_{n+1}\phi\rangle_{0}|+|\langle(\Delta'\phi-\partial_{n+1}^{2}\phi)u,\partial_{n+1}u\rangle_{0}| & \le C\|e^{\tau\phi}x_{n+1}^{1-2s}\partial_{n+1}w\|_{0}^{2}\\
|\langle(\partial_{n+1}\phi)|\nabla\phi|^{2}u,u\rangle_{0}| & \le C\|e^{\tau\phi}x_{n+1}^{2-4s}w\|_{0}^{2}\rightarrow0.
\end{align*}
Also, 
\begin{align*}
|\langle(\partial_{n+1}^{2}\phi)\partial_{n+1}u,u\rangle_{0}| & \le C\|e^{\tau\phi}x_{n+1}^{1-2s}\partial_{n+1}w\|_{0}^{2}\\
\|x_{n+1}^{\frac{1-2s}{2}}\partial_{n+1}u\|_{0}^{2} & =\bigg\| e^{\tau\phi}x_{n+1}^{1-2s}\partial_{n+1}w-\frac{2s-1}{2}e^{\tau\phi}x_{n+1}^{-2s}w\bigg\|_{0}^{2}\le C\|e^{\tau\phi}x_{n+1}^{1-2s}\partial_{n+1}w\|_{0}^{2}\\
\|x_{n+1}^{\frac{2s-1}{2}}|x'|\nabla'u\|_{0}^{2} & =\|e^{\tau\phi}|x'|\nabla'w\|_{0}^{2}\\
\|(x_{n+1}^{1-2s}-x_{n+1})^{\frac{1}{2}}u\|_{0}^{2} & \rightarrow0\\
\|x_{n+1}^{-\frac{1}{2}-s}u\|_{0}^{2} & =\|e^{\tau\phi}x_{n+1}^{-2s}w\|_{0}^{2}\le C\|e^{\tau\phi}x_{n+1}^{1-2s}\partial_{n+1}w\|_{0}^{2}\\
|\langle\partial_{n+1}u,u\rangle_{0}| & \rightarrow0.
\end{align*}
Finally, we also have 
\begin{align*}
|\langle\Delta'u,\partial_{n+1}u\rangle_{0}|\le & \|x_{n+1}^{\frac{2s-1}{2}}\Delta'u\|_{0}^{2}+\|x_{n+1}^{\frac{1-2s}{2}}\partial_{n+1}u\|_{0}^{2}\\
= & \bigg\|-\frac{n\tau}{2}e^{\tau\phi}w+\frac{\tau^{2}}{4}|x'|^{2}e^{\tau\phi}w-\tau e^{\tau\phi}x'\cdot\nabla'w+e^{\tau\phi}\Delta'w\bigg\|_{0}^{2}+\|x_{n+1}^{\frac{1-2s}{2}}\partial_{n+1}u\|_{0}^{2}\\
\le & C\|e^{\tau\phi}\Delta'w\|_{0}^{2}+C\tau^{2}\|e^{\tau\phi}x'\cdot\nabla'w\|_{0}^{2}+C\|e^{\tau\phi}x_{n+1}^{1-2s}\partial_{n+1}w\|_{0}^{2}.
\end{align*}

\textbf{Step 4: Conclusion.} Put them together, we reach 
\begin{align*}
 & \tau^{3}\|u\|^{2}+\tau\|e^{\tau\phi}x_{n+1}^{\frac{1-2s}{2}}\nabla w\|^{2}\\
\le & C\bigg(\|L^{+}u\|^{2}+\tau^{-1}\|e^{\tau\phi}\Delta'w\|_{0}^{2}+\tau\|e^{\tau\phi}x'\cdot\nabla'w\|_{0}^{2}+\tau\|e^{\tau\phi}x_{n+1}^{1-2s}\partial_{n+1}w\|_{0}^{2}\bigg),
\end{align*}
which is our desired result. 
\end{proof}
As in \cite{RS20Calderon}, we introduce the following sets for $s\in[\frac{1}{2},1)$:
\begin{align*}
C_{s,r}^{+} & :=\bigg\{(x',x_{n+1})\in\mathbb{R}_{+}^{n+1}:x_{n+1}\le\bigg[(1-s)\bigg(r-\frac{|x'|^{2}}{4}\bigg)\bigg]^{\frac{1}{2-2s}}\bigg\}\\
C_{s,r}' & := \bigg\{(x',0)\in \mathbb{R}^{n} \times \{0\} : 0\le\bigg[(1-s)\bigg(r-\frac{|x'|^{2}}{4}\bigg)\bigg]^{\frac{1}{2-2s}}\bigg\}.
\end{align*}
With this notation, we infer the following analogous to \cite[Proposition 5.10]{RS20Calderon}: 
\begin{lem}
Let $s\in[\frac{1}{2},1)$. Suppose that $\tilde{w}\in H^{1}(\mathbb{R}_{+}^{n+1},x_{n+1}^{1-2s})$
is a solution to 
\begin{align*}
\bigg[\partial_{n+1}x_{n+1}^{1-2s}\partial_{n+1}+x_{n+1}^{1-2s}\sum_{j,k=1}^{n}\partial_{j}a_{jk}\partial_{k}\bigg]\tilde{w} & =0\quad\text{in }\mathbb{R}_{+}^{n+1},\\
\tilde{w} & =w\quad\text{on }\mathbb{R}^{n}\times\{0\},
\end{align*}
with $w=0$ on $B_{1}'$. We assume that 
\[
\max_{1\le j,k\le n}\|a_{jk}-\delta_{jk}\|_{\infty}+\max_{1\le j,k\le n}\|\nabla'a_{jk}\|_{\infty}\le\epsilon
\]
for some sufficiently small $\epsilon>0$. For $s\neq\frac{1}{2}$,
we further assume 
\[
\max_{1\le j,k\le n}\|(\nabla')^{2}a_{jk}\|_{\infty}\le C
\]
for some positive constant $C$. Then there exists $\alpha=\alpha(n,s)\in(0,1)$
such that 
\[
\|x_{n+1}^{\frac{1-2s}{2}}\tilde{w}\|_{L^{2}(C_{s,1/8}^{+})}\le C\|x_{n+1}^{\frac{1-2s}{2}}\tilde{w}\|_{L^{2}(C_{s,1/2}^{+})}^{\alpha}\cdot\lim_{x_{n+1}\rightarrow0}\|x_{n+1}^{1-2s}\partial_{n+1}\tilde{w}\|_{L^{2}(C_{s,1/2}')}^{1-\alpha}.
\]
\end{lem}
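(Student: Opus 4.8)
To prove this three-cone inequality, the plan is to apply the Carleman estimate of Lemma~\ref{lem:0Carl} to $v:=\eta\tilde{w}$, where $\eta$ is a cutoff adapted to the weight $\phi$, to use the hypothesis $w\equiv0$ on $B_1'$ to annihilate the tangential boundary terms, and then to optimise the resulting inequality in the parameter $\tau$. First I would record the relation between the cones and the weight. Writing $\phi_{0}(x):=-\tfrac{|x'|^{2}}{4}-\tfrac{2}{2-2s}x_{n+1}^{2-2s}$, one has $\phi=\phi_{0}+x_{n+1}^{2}$ and $C_{s,r}^{+}=\{\phi_{0}\ge-r\}$, with trace $C_{s,r}'=\{|x'|\le2\sqrt{r}\}$. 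Since the $x_{n+1}$-extent of $C_{s,1/2}^{+}$ is attained on the axis and is $\le\tfrac14$ for every $s\in[\tfrac12,1)$, we get $0\le x_{n+1}^{2}\le\tfrac1{16}$ on $C_{s,1/2}^{+}$, hence $\phi_{0}\le\phi\le\phi_{0}+\tfrac1{16}$ there; moreover an inspection of the proof of Lemma~\ref{lem:0Carl} shows that the support hypothesis is used only through $x_{n+1}\le\tfrac12$, so the estimate applies to any $v$ supported in $C_{s,1/2}^{+}$ (alternatively one rescales).

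Next I would choose $\eta\in C_{c}^{\infty}(\mathbb{R}_{+}^{n+1})$, $0\le\eta\le1$, with $\eta\equiv1$ on a neighbourhood of $C_{s,1/8}^{+}$, ${\rm supp}\,\eta\subset C_{s,1/2}^{+}$, $\eta\equiv0$ near $\{(x',0):|x'|>1\}$, and with $\nabla\eta$ bounded and supported in $\{\phi\le-c_{1}\}$ for some $c_{1}>\tfrac18$. This is possible: $C_{s,1/8}'=\{|x'|\le\tfrac1{\sqrt2}\}$ sits well inside $B_{1}'$ while $C_{s,1/2}^{+}$ collapses onto $\{x_{n+1}=0\}$ for $1\le|x'|\le\sqrt2$ (on that thin sliver $\phi\le-\tfrac14+\tfrac1{16}<-\tfrac18$ automatically); taking $\eta=\chi(\psi)$ for a standard one-dimensional cutoff $\chi$ and a version $\psi$ of $\phi$ smoothed (made flat in $x_{n+1}$) near $\{x_{n+1}=0\}$ keeps $\nabla\eta$ bounded, and placing the transition of $\chi$ in a region where $\phi_{0}\le-\tfrac14$ yields $\phi\le-c_{1}$ with $c_{1}>\tfrac18$ on ${\rm supp}\,\nabla\eta$ — this uses the gap between $\{\phi_{0}\ge-\tfrac18\}$ and $\{\phi_{0}\ge-\tfrac12\}$ and is the decisive point, since $\inf_{C_{s,1/8}^{+}}\phi=\inf_{C_{s,1/8}^{+}}\phi_{0}=-\tfrac18$, so $e^{\tau\phi}\le e^{-c_{1}\tau}$ on ${\rm supp}\,\nabla\eta$ while $e^{\tau\phi}\ge e^{-\tau/8}$ on $C_{s,1/8}^{+}$. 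Because ${\rm supp}\,\eta\cap(\mathbb{R}^{n}\times\{0\})\subset B_{1}'$ and $w$ vanishes there together with all its derivatives, the trace of $v$ and its tangential derivatives vanish, so the terms $\tau^{-1}\|e^{\tau\phi}\Delta'v\|_{0}^{2}$ and $\tau\|e^{\tau\phi}x'\cdot\nabla'v\|_{0}^{2}$ in Lemma~\ref{lem:0Carl} are $0$, while $x_{n+1}^{1-2s}\partial_{n+1}v|_{x_{n+1}\to0}=\eta\,x_{n+1}^{1-2s}\partial_{n+1}\tilde{w}|_{x_{n+1}\to0}$.

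Then I would pass to non-divergence form, $\sum_{j,k}\partial_{j}a_{jk}\partial_{k}=\sum_{j,k}a_{jk}\partial_{j}\partial_{k}+\sum_{j,k}(\partial_{j}a_{jk})\partial_{k}$, so that $v$ solves $\bigl[\partial_{n+1}x_{n+1}^{1-2s}\partial_{n+1}+x_{n+1}^{1-2s}\sum_{j,k}a_{jk}\partial_{j}\partial_{k}\bigr]v=f$ with $f=f_{\mathrm{comm}}-\eta\,x_{n+1}^{1-2s}\sum_{j,k}(\partial_{j}a_{jk})\partial_{k}\tilde{w}$ and $f_{\mathrm{comm}}$ supported in ${\rm supp}\,\nabla\eta$. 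On ${\rm supp}\,\eta$ one has $x_{n+1}\le\tfrac14<1$ and $2s-1\ge0$, hence $x_{n+1}^{(2s-1)/2}\le x_{n+1}^{(1-2s)/2}$; using this and $\eta\partial_{k}\tilde{w}=\partial_{k}v-(\partial_{k}\eta)\tilde{w}$, the drift part contributes to $\|e^{\tau\phi}x_{n+1}^{(2s-1)/2}f\|$ a summand $\le C\epsilon\|e^{\tau\phi}x_{n+1}^{(1-2s)/2}\nabla v\|$ (absorbed into the gradient term on the left of Lemma~\ref{lem:0Carl} once $\epsilon$ is small) plus a summand supported in ${\rm supp}\,\nabla\eta$; and $\|e^{\tau\phi}x_{n+1}^{(2s-1)/2}f_{\mathrm{comm}}\|^{2}\le Ce^{-2c_{1}\tau}\bigl(\|x_{n+1}^{(1-2s)/2}\tilde{w}\|_{L^{2}({\rm supp}\,\nabla\eta)}^{2}+\|x_{n+1}^{(1-2s)/2}\nabla\tilde{w}\|_{L^{2}({\rm supp}\,\nabla\eta)}^{2}\bigr)$, after which an interior Caccioppoli estimate for the degenerate equation (which, where ${\rm supp}\,\nabla\eta$ touches $\{x_{n+1}=0\}$, also emits a harmless term $\lesssim\lim_{x_{n+1}\to0}\|x_{n+1}^{1-2s}\partial_{n+1}\tilde{w}\|_{L^{2}(C_{s,1/2}')}^{2}$) trades $\|x_{n+1}^{(1-2s)/2}\nabla\tilde{w}\|_{L^{2}({\rm supp}\,\nabla\eta)}$ for $\|x_{n+1}^{(1-2s)/2}\tilde{w}\|_{L^{2}(C_{s,1/2}^{+})}$.

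Finally, keeping only the term $\tau^{3}\|e^{\tau\phi}x_{n+1}^{(1-2s)/2}v\|^{2}$ on the left of Lemma~\ref{lem:0Carl}, restricting it to $C_{s,1/8}^{+}$ (where $\eta\equiv1$ and $e^{\tau\phi}\ge e^{-\tau/8}$), and bounding the right side as above together with $\tau\|e^{\tau\phi}x_{n+1}^{1-2s}\partial_{n+1}v\|_{0}^{2}\le C\tau\lim_{x_{n+1}\to0}\|x_{n+1}^{1-2s}\partial_{n+1}\tilde{w}\|_{L^{2}(C_{s,1/2}')}^{2}$ (using $e^{\tau\phi}\le1$ on $\mathbb{R}^{n}\times\{0\}$), I expect to reach, for all $\tau\ge\tau_{0}$ and with $N_{\mathrm{in}},N_{\mathrm{out}},N_{\partial}$ denoting the three norms in the statement,
\[
\tau^{3}e^{-\tau/4}N_{\mathrm{in}}^{2}\le Ce^{-2c_{1}\tau}N_{\mathrm{out}}^{2}+C\tau\,N_{\partial}^{2},
\]
whence $N_{\mathrm{in}}\le Ce^{-(c_{1}-1/8)\tau}N_{\mathrm{out}}+Ce^{\tau/8}N_{\partial}$; optimising over $\tau\ge\tau_{0}$ (choosing $\tau=c_{1}^{-1}\log(N_{\mathrm{out}}/N_{\partial})$ when $N_{\mathrm{out}}/N_{\partial}\ge e^{c_{1}\tau_{0}}$, and using the trivial bound $N_{\mathrm{in}}\le N_{\mathrm{out}}$, valid since $C_{s,1/8}^{+}\subset C_{s,1/2}^{+}$, otherwise) then yields $N_{\mathrm{in}}\le CN_{\mathrm{out}}^{\alpha}N_{\partial}^{1-\alpha}$ with $\alpha:=\tfrac{1/8}{c_{1}}\in(0,1)$ depending only on $n$ and $s$, which is the claim. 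The main obstacle is the geometric bookkeeping for $\eta$: arranging simultaneously that ${\rm supp}\,\eta\subset C_{s,1/2}^{+}$, that ${\rm supp}\,\eta\cap\partial\mathbb{R}_{+}^{n+1}\subset B_{1}'$ (so the tangential boundary terms vanish), and that ${\rm supp}\,\nabla\eta$ lies in $\{\phi<\inf_{C_{s,1/8}^{+}}\phi\}$ with $\nabla\eta$ bounded (which forces smoothing the weight near $\{x_{n+1}=0\}$ and is what produces a genuine power-type interpolation rather than only a logarithmic one); the secondary point is handling $f$ in the weight $x_{n+1}^{(2s-1)/2}$ of Lemma~\ref{lem:0Carl}, which works out because $x_{n+1}\le\tfrac14$ on ${\rm supp}\,\eta$.
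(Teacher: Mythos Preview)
Your proposal is correct and follows essentially the same strategy as the paper: localize $\tilde w$ by a cutoff adapted to the level sets of $\phi$, apply the Carleman estimate of Lemma~\ref{lem:0Carl}, use $w|_{B_1'}=0$ to annihilate the tangential boundary terms, absorb the first-order drift produced by the divergence-to-nondivergence conversion into the gradient term on the left, invoke Caccioppoli to convert $\|x_{n+1}^{(1-2s)/2}\nabla\tilde w\|$ into $\|x_{n+1}^{(1-2s)/2}\tilde w\|$, and finally optimise in $\tau$. The paper carries this out with the cutoff transitioning in $C_{s,1/4}^{+}\setminus C_{s,3/16}^{+}$ (so that $\sup_{\mathrm{supp}\,\nabla\eta}\phi\le -\tfrac{11}{64}<-\tfrac18=\inf_{C_{s,1/8}^{+}}\phi$), imposing directly $|\partial_{n+1}\eta|\le Cx_{n+1}$ and $\partial_{n+1}\eta|_{x_{n+1}=0}=0$ rather than obtaining this via a smoothed weight; your choice of transition region and your construction of $\eta$ are cosmetically different but lead to the same inequality with a different admissible $\alpha$.
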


\begin{proof}
We may assume that $\|x_{n+1}^{\frac{1-2s}{2}}\tilde{w}\|_{L^{2}(C_{s,1/2}^{+})}>0$
and 
\[
\|x_{n+1}^{\frac{1-2s}{2}}\tilde{w}\|_{L^{2}(C_{s,1/2}^{+})}\ge c_{0}\lim_{x_{n+1}\rightarrow0}\|x_{n+1}^{1-2s}\partial_{n+1}\tilde{w}\|_{L^{2}(C_{s,1/2}')}^{1-\alpha}
\]
for some sufficiently large constant $c_{0}>0$. Otherwise the result
is trivial. 

Let $\eta$ is a smooth cut-off function satisfies 
\[
\eta(x)=\begin{cases}
1 & \text{in }C_{s,3/16}^{+},\\
0 & \text{in }\mathbb{R}_{+}^{n+1}\setminus C_{s,1/4}^{+},
\end{cases}
\]
and $|\partial_{n+1}\eta|\le Cx_{n+1}$ in $\mathbb{R}_{+}^{n+1}$
with $\partial_{n+1}\eta=0$ on $\mathbb{R}^{n}\times\{0\}$. Define
$\overline{w}=\eta\tilde{w}$. Note that $\overline{w}$ satisfies
${\rm supp}(\overline{w})\subset B_{1/2}^{+}$ and it solves 
\begin{align*}
\bigg[\partial_{n+1}x_{n+1}^{1-2s}\partial_{n+1}+x_{n+1}^{1-2s}\sum_{j,k=1}^{n}a_{jk}\partial_{j}\partial_{k}\bigg]\overline{w} & =f\quad\text{in }\mathbb{R}_{+}^{n+1},\\
\overline{w} & =0\quad\text{on }\mathbb{R}^{n}\times\{0\},
\end{align*}
where 
\begin{align*}
f= & \partial_{n+1}(x_{n+1}^{1-2s}\partial_{n+1}\eta)\tilde{w}+x_{n+1}^{1-2s}\sum_{j,k=1}^{n}\partial_{j}(a_{jk}\partial_{k}\eta)\tilde{w}\\
 & +2x_{n+1}^{1-2s}\partial_{n+1}\eta\partial_{n+1}\tilde{w}+x_{n+1}^{1-2s}\sum_{j,k=1}^{n}a_{jk}\partial_{k}\eta\partial_{j}\tilde{w}+x_{n+1}^{1-2s}\sum_{j,k=1}^{n}a_{jk}\partial_{j}\eta\partial_{k}\tilde{w}\\
 & -x_{n+1}^{1-2s}\sum_{j,k=1}^{n}(\partial_{j}a_{jk})\partial_{k}\overline{w}.
\end{align*}
Since $\eta$ and $\nabla\eta$ are bounded, together with $|\partial_{n+1}\eta|\le Cx_{n+1}$,
we know that 
\[
\|x_{n+1}^{\frac{2s-1}{2}}f\|_{L^{2}(\mathbb{R}_{+}^{n+1})}\le C(\|x_{n+1}^{\frac{1-2s}{2}}\tilde{w}\|_{L^{2}(C_{s,1/4}^{+})}+\|x_{n+1}^{\frac{1-2s}{2}}\nabla\tilde{w}\|_{L^{2}(C_{s,1/4}^{+})})<\infty.
\]
Moreover, since $w|_{B_{1}'}=0$ and ${\rm supp}(\eta)\subset B_{1}'$
on $\mathbb{R}^{n}\times\{0\}$, then 
\[
\lim_{x_{n+1}\rightarrow0}\nabla'\overline{w}=0,\quad\lim_{x_{n+1}\rightarrow0}\Delta'\overline{w}=0\quad\text{and also}\quad\lim_{x_{n+1}\rightarrow0}x_{n+1}^{1-2s}\partial_{n+1}\overline{w}=\eta\lim_{x_{n+1}\rightarrow0}x_{n+1}^{1-2s}\partial_{n+1}\tilde{w}.
\]
So, by the Carleman estimate in Lemma \ref{lem:0Carl}, there exists $\tau_{0}>1$ such that 
\begin{align*}
 & \tau^{3}\|e^{\tau\phi}x_{n+1}^{\frac{1-2s}{2}}\overline{w}\|_{L^{2}(\mathbb{R}_{+}^{n+1})}^{2}+\tau\|e^{\tau\phi}x_{n+1}^{\frac{1-2s}{2}}\nabla\overline{w}\|_{L^{2}(\mathbb{R}_{+}^{n+1})}^{2}\\
\le & C(\|e^{\tau\phi}x_{n+1}^{\frac{2s-1}{2}}f\|_{L^{2}(\mathbb{R}_{+}^{n+1})}^{2}+\tau\lim_{x_{n+1}\rightarrow0}\|e^{\tau\phi}x_{n+1}^{1-2s}\partial_{n+1}\overline{w}\|_{L^{2}(\mathbb{R}^{n}\times\{0\})}^{2})
\end{align*}
for all $\tau\ge\tau_{0}$. Then, for large $\tau_{0}$, the last term of $f$ was absorbed by the gradient term in the left-hand-side, so we have 
\[
\tau^{3}\|e^{\tau\phi}x_{n+1}^{\frac{1-2s}{2}}\overline{w}\|_{L^{2}(\mathbb{R}_{+}^{n+1})}^{2}\le C(\|e^{\tau\phi}x_{n+1}^{\frac{2s-1}{2}}g\|_{L^{2}(\mathbb{R}_{+}^{n+1})}^{2}+\tau\lim_{x_{n+1}\rightarrow0}\|e^{\tau\phi}x_{n+1}^{1-2s}\partial_{n+1}\overline{w}\|_{L^{2}(\mathbb{R}^{n}\times\{0\})}^{2}),
\]
where $g=f+x_{n+1}^{1-2s}\sum_{j,k=1}^{n}(\partial_{j}a_{jk})\partial_{k}\overline{w}$. 

Let 
\[
\phi_{-}:=\inf_{x\in C_{s,1/8}^{+}}\phi(x)\quad\text{and}\quad\phi_{+}:=\sup_{x\in C_{s,1/4}^{+}\setminus C_{s,3/16}^{+}}\phi(x).
\]
Hence, 
\begin{align*}
 & \tau^{3}e^{2\tau\phi_{-}}\|x_{n+1}^{\frac{1-2s}{2}}\tilde{w}\|_{L^{2}(C_{s,1/8}^{+})}^{2}\\
\le & C\bigg[e^{2\tau\phi_{+}}\|x_{n+1}^{\frac{2s-1}{2}}g\|_{L^{2}(C_{s,1/4}^{+}\setminus C_{s,3/16}^{+})}^{2}+\tau\lim_{x_{n+1}\rightarrow0}\|x_{n+1}^{1-2s}\partial_{n+1}\tilde{w}\|_{L^{2}(C_{s,1/4}')}^{2}\bigg].
\end{align*}
Dividing above equation by $\tau$, since $\tau\ge1$ and applying Caccioppoli's inequality
(Lemma \ref{lem:Caccio}), we obtain 
\[
\|x_{n+1}^{\frac{1-2s}{2}}\tilde{w}\|_{L^{2}(C_{s,1/8}^{+})}\le C\bigg[e^{\tau(\phi_{+}-\phi_{-})}\|x_{n+1}^{\frac{1-2s}{2}}\tilde{w}\|_{L^{2}(C_{s,1/2}^{+})}+e^{-\tau\phi_{-}}\lim_{x_{n+1}\rightarrow0}\|x_{n+1}^{1-2s}\partial_{n+1}\tilde{w}\|_{L^{2}(C_{s,1/2}')}\bigg].
\]

Observe that 
\[
-\frac{|x'|^{2}}{4}\ge\frac{1}{1-s}x_{n+1}^{2-2s}-\frac{1}{8}\quad\text{in }C_{s,1/8}^{+},
\]
and also since $s\ge\frac{1}{2}$, 
\[
x_{n+1}^{2}\le\bigg[(1-s)\bigg(\frac{1}{4}-\frac{|x'|^{2}}{4}\bigg)\bigg]^{\frac{1}{1-s}}\le\frac{1}{8^{\frac{1}{1-s}}}\le\frac{1}{64}
\]
and 
\[
-\frac{|x'|^{2}}{4}\le\frac{1}{1-s}x_{n+1}^{2-2s}-\frac{3}{16}\quad\text{in }C_{s,1/4}^{+}\setminus C_{s,3/16}^{+},
\]
so $\phi_{-}\ge-\frac{1}{8}$ and $\phi_{+}\le-\frac{11}{64}$, that is, $\phi_{+}-\phi_{-}\le-\frac{19}{64}<0$. So, we can choose $\tau$ (which is large) to satisfy 
\[
e^{\tau(\phi_{+}-\phi_{-})}=\frac{\lim_{x_{n+1}\rightarrow0}\|x_{n+1}^{1-2s}\partial_{n+1}\tilde{w}\|_{L^{2}(C_{s,1/2}')}^{1-\alpha}}{\|x_{n+1}^{\frac{1-2s}{2}}\tilde{w}\|_{L^{2}(C_{s,1/2}^{+})}^{1-\alpha}}\le\frac{1}{c_{0}}
\]
for large $c_{0}$, where $\alpha\in(0,1)$ will be chosen later.
Note that 
\[
e^{-\tau\phi_{-}}=\frac{\|x_{n+1}^{\frac{1-2s}{2}}\tilde{w}\|_{L^{2}(C_{s,1/2}^{+})}^{\frac{\phi_{-}}{\phi_{+}-\phi_{-}}(1-\alpha)}}{\lim_{x_{n+1}\rightarrow0}\|x_{n+1}^{1-2s}\partial_{n+1}\tilde{w}\|_{L^{2}(C_{s,1/2}')}^{\frac{\phi_{-}}{\phi_{+}-\phi_{-}}(1-\alpha)}}.
\]
Finally, choosing $\alpha\in(0,1)$ satisfies $\alpha=\frac{\phi_{-}}{\phi_{+}-\phi_{-}}(1-\alpha)$
will implies our desired result. 
\end{proof}
For our purpose, we only need the following simplified version of
the Lemma above: 
\begin{cor}
\label{cor:interIII}Let $s\in[\frac{1}{2},1)$. Suppose that $\tilde{w}\in H^{1}(\mathbb{R}_{+}^{n+1},x_{n+1}^{1-2s})$
is a solution to 
\begin{align*}
\bigg[\partial_{n+1}x_{n+1}^{1-2s}\partial_{n+1}+x_{n+1}^{1-2s}\sum_{j,k=1}^{n}\partial_{j}a_{jk}\partial_{k}\bigg]\tilde{w} & =0\quad\text{in }\mathbb{R}_{+}^{n+1},\\
\tilde{w} & =w\quad\text{on }\mathbb{R}^{n}\times\{0\},
\end{align*}
with $w=0$ on $B_{1}'$. We assume that 
\[
\max_{1\le j,k\le n}\|a_{jk}-\delta_{jk}\|_{\infty}+\max_{1\le j,k\le n}\|\nabla'a_{jk}\|_{\infty}\le\epsilon
\]
for some sufficiently small $\epsilon>0$. For $s\neq\frac{1}{2}$,
we further assume 
\[
\max_{1\le j,k\le n}\|(\nabla')^{2}a_{jk}\|_{\infty}\le C
\]
for some positive constant $C$. Then there exist $\alpha=\alpha(n,s)\in(0,1)$,
$c=c(n,s)\in(0,1)$ and a constant $C$ such that 
\[
\|x_{n+1}^{\frac{1-2s}{2}}\tilde{w}\|_{L^{2}(B_{c}^{+})}\le C\|x_{n+1}^{\frac{1-2s}{2}}\tilde{w}\|_{L^{2}(B_{2}^{+})}^{\alpha}\cdot\lim_{x_{n+1}\rightarrow0}\|x_{n+1}^{1-2s}\partial_{n+1}\tilde{w}\|_{L^{2}(B_{2}')}^{1-\alpha}.
\]
\end{cor}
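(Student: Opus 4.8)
The plan is to derive Corollary~\ref{cor:interIII} from the preceding lemma by purely geometric considerations, namely by fitting suitable balls inside the conical regions $C_{s,r}^{+}$ and vice versa. First I would fix $s\in[\tfrac12,1)$ and observe that for $r$ in the relevant range the set $C_{s,r}^{+}$ is a bounded neighbourhood of the origin in $\mathbb{R}_{+}^{n+1}$, pinched near $|x'|=2\sqrt{r}$ on the boundary $\mathbb{R}^{n}\times\{0\}$, with $x_{n+1}$ comparable to a power of $r-\tfrac{|x'|^{2}}{4}$. In particular there exist radii $0<c<\tfrac18$ depending only on $n,s$ such that
\[
B_{c}^{+}\subset C_{s,1/8}^{+}\quad\text{and}\quad C_{s,1/2}^{+}\subset B_{2}^{+},
\]
and similarly $C_{s,1/2}'\subset B_{2}'$ on the boundary slice. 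The first inclusion is clear since points of $B_c^+$ have $|x'|^2/4 \le c^2 < 1/8$ and $x_{n+1}\le c$, which for $c$ small enough forces $x_{n+1}\le[(1-s)(1/8-|x'|^2/4)]^{1/(2-2s)}$; the second holds because on $C_{s,1/2}^{+}$ one has $|x'|^2\le 2$ and $x_{n+1}\le[(1-s)/2]^{1/(2-2s)}\le 1$, hence $|x|^2\le 2+1<4$.

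Granting these inclusions, the monotonicity of the $L^2$ norms with respect to the domain gives at once
\[
\|x_{n+1}^{\frac{1-2s}{2}}\tilde w\|_{L^2(B_c^+)}\le\|x_{n+1}^{\frac{1-2s}{2}}\tilde w\|_{L^2(C_{s,1/8}^+)}
\]
and
\[
\|x_{n+1}^{\frac{1-2s}{2}}\tilde w\|_{L^2(C_{s,1/2}^+)}\le\|x_{n+1}^{\frac{1-2s}{2}}\tilde w\|_{L^2(B_2^+)},\qquad
\lim_{x_{n+1}\to0}\|x_{n+1}^{1-2s}\partial_{n+1}\tilde w\|_{L^2(C_{s,1/2}')}\le\lim_{x_{n+1}\to0}\|x_{n+1}^{1-2s}\partial_{n+1}\tilde w\|_{L^2(B_2')}.
\]
Plugging these three inequalities into the conclusion of the previous lemma, and using that the map $(a,b)\mapsto a^{\alpha}b^{1-\alpha}$ is monotone in each variable for $\alpha\in(0,1)$, yields exactly the asserted three-ball estimate with the same exponent $\alpha=\alpha(n,s)$ and a constant $C$ absorbing the one from the lemma.

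I expect the only genuinely delicate point to be verifying the geometric inclusions uniformly as $s$ ranges over $[\tfrac12,1)$: the exponent $\tfrac{1}{2-2s}$ blows up as $s\to1$, so one must check that the constant $c$ can be chosen independent of $s$ (or, if one prefers, simply allow $c$ to depend on $n,s$, which is all that is claimed). Concretely, for $x\in B_c^+$ one needs $x_{n+1}^{2-2s}\le(1-s)(\tfrac18-\tfrac{|x'|^2}{4})$, and since $|x'|\le c$ the right side is at least $(1-s)(\tfrac18-\tfrac{c^2}{4})\ge\tfrac{1-s}{16}$ for $c\le\tfrac12$, while the left side is at most $c^{2-2s}$; thus it suffices to take $c$ so small that $c^{2-2s}\le\tfrac{1-s}{16}$, which for each fixed $s$ is possible, and is the place where the $s$-dependence of $c$ enters. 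Everything else is a one-line application of domain monotonicity of the $L^2$ norm and of the monotonicity of the interpolation exponents, so no further estimates are required.
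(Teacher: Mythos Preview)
Your proof is correct and follows exactly the approach implicit in the paper, which presents the corollary as an immediate ``simplified version'' of the preceding lemma obtained by the domain inclusions $B_{c}^{+}\subset C_{s,1/8}^{+}$ and $C_{s,1/2}^{+}\subset B_{2}^{+}$, $C_{s,1/2}'\subset B_{2}'$. Your verification of these inclusions and your remark that $c$ must depend on $s$ (which is all the statement claims) are both accurate.
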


Now we are ready to prove the part~\ref{itm:part-a:lem:small} of Lemma~\ref{lem:small} for
the case when $s\in[\frac{1}{2},1)$. 
\begin{proof}
[Proof of the part~{\rm \ref{itm:part-a:lem:small}} of Lemma~{\rm \ref{lem:small}} for $s \in [ \frac{1}{2} ,1 )$]
In order to invoke the estimation from Corollary~\ref{cor:interIII},
we split our solution $u$ into two parts $\tilde{u}=u_{1}+u_{2}$,
where $u_{1}:= \mathsf{E}_{s}(\zeta u)$ satisfies 
\begin{align*}
\bigg[\partial_{n+1}x_{n+1}^{1-2s}\partial_{n+1}+x_{n+1}^{1-2s}\sum_{j,k=1}^{n}\partial_{j}a_{jk}\partial_{k}\bigg]u_{1} & =0\quad\text{in }\mathbb{R}_{+}^{n+1},\\
u_{1} & =\zeta u\quad\text{on }\mathbb{R}^{n}\times\{0\},
\end{align*}
where $\zeta\in\mathcal{C}_{0}^{\infty}(B_{16}')$ is a smooth cut-off
function with $\zeta=1$ on $B_{8}'$. Since $u_{1}:= \mathsf{E}_{s}(\zeta u)$, from \eqref{eq:apriori} 
we have 
\[
\int_{\mathbb{R}^{n}}|u_{1}(x',x_{n+1})|^{2}\,dx'\le\|u_{1}\|_{L^{2}(\mathbb{R}^{n}\times\{0\})}^{2}\le\|u\|_{L^{2}(B_{16}')}^{2}.
\]
So, 
\begin{align}
\|x_{n+1}^{\frac{1-2s}{2}}u_{1}\|_{L^{2}(B_{10}^{+})}^{2} & \le\int_{0}^{10}\int_{\mathbb{R}^{n}}x_{n+1}^{1-2s}|u_{1}(x',x_{n+1})|^{2}\,dx'\,dx_{n+1}\nonumber \\
 & \le\bigg(\int_{0}^{10}x_{n+1}^{1-2s}\,dx_{n+1}\bigg)\|u\|_{L^{2}(B_{16}')}^{2}=C\|u\|_{L^{2}(B_{16}')}^{2}.\label{eq:small1}
\end{align}

Note that $u_{2}$ satisfies 
\begin{align*}
\bigg[\partial_{n+1}x_{n+1}^{1-2s}\partial_{n+1}+x_{n+1}^{1-2s}\sum_{j,k=1}^{n}\partial_{j}a_{jk}\partial_{k}\bigg]u_{2} & =0\quad\text{in }\mathbb{R}_{+}^{n+1},\\
u_{2} & =u-\zeta u\quad\text{on }\mathbb{R}^{n}\times\{0\}.
\end{align*}
Since $u_{2}=0$ on $B_{8}'$, by Corollary~\ref{cor:interIII}, there
exist $\alpha=\alpha(n,s)\in(0,1)$, $c=c(n,s)\in(0,1)$ and a constant
$C$ such that 
\begin{equation}
\|x_{n+1}^{\frac{1-2s}{2}}u_{2}\|_{L^{2}(B_{c}^{+})}\le C\|x_{n+1}^{\frac{1-2s}{2}}u_{2}\|_{L^{2}(B_{2}^{+})}^{\alpha}\cdot\lim_{x_{n+1}\rightarrow0}\|x_{n+1}^{1-2s}\partial_{n+1}u_{2}\|_{L^{2}(B_{2}')}^{1-\alpha}.\label{eq:small2}
\end{equation}
Let $\eta$ be a smooth, radial cut-off function with $\eta=1$ in
$B_{2}^{+}$ and $\eta=0$ outside $B_{4}^{+}$. Plug $w=\eta x_{n+1}^{1-2s}\partial_{n+1}u_{2}$
into the trace characterization lemma (Lemma \ref{lem:inter2(b)}),
we reach 
\begin{align}
 & \lim_{x_{n+1}\rightarrow0}\|x_{n+1}^{1-2s}\partial_{n+1}u_{2}\|_{L^{2}(B_{2}')}\nonumber \\
\le & C\bigg[\mu^{1-s}(\|x_{n+1}^{\frac{1-2s}{2}}\partial_{n+1}u_{2}\|_{L^{2}(B_{4}^{+})}+\|x_{n+1}^{\frac{2s-1}{2}}\nabla(\eta x_{n+1}^{1-2s}\partial_{n+1}u_{2})\|_{L^{2}(\mathbb{R}_{+}^{n+1})})\nonumber \\
 & +\mu^{-2s}\lim_{x_{n+1}\rightarrow0}\|\eta x_{n+1}^{1-2s}\partial_{n+1}u_{2}\|_{H^{-2s}(\mathbb{R}^{n}\times\{0\})}\bigg].\label{eq:small2-0}
\end{align}
We first control the boundary term of \eqref{eq:small2-0}. Since $\eta$ is a bounded multiplier on $H^{2s}(\mathbb{R}^{n})$, using duality, we have 
\begin{align*}
\|\eta v\|_{H^{-2s}(\mathbb{R}^{n}\times\{0\})} & =\sup_{\|\varphi\|_{H^{2s}(\mathbb{R}^{n}\times\{0\})}=1}|\langle v,\eta\varphi\rangle_{L^{2}(\mathbb{R}^{n}\times\{0\})}|\\
 & \le\|v\|_{H^{-2s}(B_{8}')}\sup_{\|\varphi\|_{H^{2s}(\mathbb{R}^{n}\times\{0\})}=1}\|\eta\varphi\|_{H^{2s}(\mathbb{R}^{n}\times\{0\})}\\
 & \le C\|v\|_{H^{-2s}(B_{8}')}\sup_{\|\varphi\|_{H^{2s}(\mathbb{R}^{n}\times\{0\})}=1}\|\varphi\|_{H^{2s}(\mathbb{R}^{n}\times\{0\})}=C\|v\|_{H^{-2s}(B_{8}')}.
\end{align*}
Plug $v=x_{n+1}^{1-2s}\partial_{n+1}u_{2}$, we have 
\begin{equation}
\lim_{x_{n+1}\rightarrow0}\|\eta x_{n+1}^{1-2s}\partial_{n+1}u_{2}\|_{H^{-2s}(\mathbb{R}^{n}\times\{0\})}\le C\cdot\lim_{x_{n+1}\rightarrow0}\|x_{n+1}^{1-2s}\partial_{n+1}u_{2}\|_{H^{-2s}(B_{8}')}.\label{eq:small2-1}
\end{equation}
Applying the Caccioppoli's inequality in Lemma~\ref{lem:Caccio}, with zero Dirichlet condition and zero inhomogeneous terms, we have 
\begin{equation}
\|x_{n+1}^{\frac{1-2s}{2}}\nabla u_{2}\|_{L^{2}(B_{4}^{+})}\le C\|x_{n+1}^{\frac{1-2s}{2}}u_{2}\|_{L^{2}(B_{8}^{+})}.\label{eq:small2-2a}
\end{equation}
Also, we have 
\begin{align*}
 & \|x_{n+1}^{\frac{2s-1}{2}}\nabla(\eta x_{n+1}^{1-2s}\partial_{n+1}u_{2})\|_{L^{2}(\mathbb{R}_{+}^{n+1})}\\
\le & \|x_{n+1}^{\frac{1-2s}{2}}(\nabla\eta)\partial_{n+1}u_{2}\|_{L^{2}(\mathbb{R}_{+}^{n+1})}+\|x_{n+1}^{\frac{1-2s}{2}}\eta\nabla'\partial_{n+1}u_{2}\|_{L^{2}(\mathbb{R}_{+}^{n+1})}\\
 & +\|x_{n+1}^{\frac{2s-1}{2}}\eta\partial_{n+1}x_{n+1}^{1-2s}\partial_{n+1}u_{2}\|_{L^{2}(\mathbb{R}_{+}^{n+1})}\\
\le & C\|x_{n+1}^{\frac{1-2s}{2}}\partial_{n+1}u_{2}\|_{L^{2}(B_{4}^{+})}+\|x_{n+1}^{\frac{1-2s}{2}}\partial_{n+1}(\nabla'u_{2})\|_{L^{2}(B_{4}^{+})}+\bigg\| x_{n+1}^{\frac{1-2s}{2}}\sum_{j,k=1}^{n}\partial_{j}a_{jk}\partial_{k}u_{2}\bigg\|_{L^{2}(B_{4}^{+})}\\
\le & C\bigg[\|x_{n+1}^{\frac{1-2s}{2}}\nabla u_{2}\|_{L^{2}(B_{4}^{+})}+\|x_{n+1}^{\frac{1-2s}{2}}\nabla(\nabla'u_{2})\|_{L^{2}(B_{4}^{+})}\bigg],
\end{align*}
where the last inequality follows by the boundedness assumptions
of $a_{jk}$. Observe that 
\begin{align*}
0 & =\nabla'\bigg[\partial_{n+1}(x_{n+1}^{1-2s}\partial_{n+1}u_{2})+x_{n+1}^{1-2s}\sum_{j,k=1}^{n}\partial_{j}(a_{jk}\partial_{k}u_{2})\bigg]\\
 & =\bigg[\partial_{n+1}x_{n+1}^{1-2s}\partial_{n+1}+x_{n+1}^{1-2s}\sum_{j,k=1}^{n}\partial_{j}a_{jk}\partial_{k}\bigg](\nabla'u_{2})+x_{n+1}^{1-2s}\sum_{j=1}^{n}\partial_{j}\bigg(\sum_{k=1}^{n}\nabla'a_{jk}\partial_{k}u_{2}\bigg).
\end{align*}
Applying Caccioppoli's inequality in Lemma~\ref{lem:Caccio} on $\nabla'u_{2}$
with zero Dirichlet condition and $f_{j}=\sum_{k=1}^{n}\nabla'a_{jk}\partial_{k}u_{2}$,
since $\|\nabla'a_{jk}\|_{\infty}\le\epsilon$, we have 
\begin{equation}
\|x_{n+1}^{\frac{1-2s}{2}}\partial_{n+1}\nabla(\nabla'u_{2})\|_{L^{2}(B_{4}^{+})}\le C'\|x_{n+1}^{\frac{1-2s}{2}}\nabla'u_{2}\|_{L^{2}(B_{6}^{+})}\le C\|x_{n+1}^{\frac{1-2s}{2}}u_{2}\|_{L^{2}(B_{8}^{+})},\label{eq:small2-2b}
\end{equation}
where the second inequality follows by \eqref{eq:small2-2a}. Hence,
we reach 
\begin{equation}
\|x_{n+1}^{\frac{2s-1}{2}}\nabla(\eta x_{n+1}^{1-2s}\partial_{n+1}u_{2})\|_{L^{2}(\mathbb{R}_{+}^{n+1})}\le C\|x_{n+1}^{\frac{1-2s}{2}}u_{2}\|_{L^{2}(B_{8}^{+})}.\label{eq:small2-3}
\end{equation}
Plugging \eqref{eq:small2-1}, \eqref{eq:small2-2a} and \eqref{eq:small2-3}
into \eqref{eq:small2-0}, and optimizing the result estimate in $\mu>0$
gives 
\[
\lim_{x_{n+1}\rightarrow0}\|x_{n+1}^{1-2s}\partial_{n+1}u_{2}\|_{L^{2}(B_{2}')}\le C\|x_{n+1}^{\frac{1-2s}{2}}u_{2}\|_{L^{2}(B_{8}^{+})}^{\frac{2s}{1+s}}\cdot\lim_{x_{n+1}\rightarrow0}\|x_{n+1}^{1-2s}\partial_{n+1}u_{2}\|_{H^{-2s}(B_{8}')}^{\frac{1-s}{1+s}}.
\]
Plugging this into \eqref{eq:small2} leads to 
\begin{align}
 & \|x_{n+1}^{\frac{1-2s}{2}}u_{2}\|_{L^{2}(B_{c}^{+})}\nonumber \\
\le & C\|x_{n+1}^{\frac{1-2s}{2}}u_{2}\|_{L^{2}(B_{8}^{+})}^{\tilde{\alpha}}\cdot\lim_{x_{n+1}\rightarrow0}\|x_{n+1}^{1-2s}\partial_{n+1}u_{2}\|_{H^{-2s}(B_{8}')}^{1-\tilde{\alpha}}\nonumber \\
\le & C(\|x_{n+1}^{\frac{1-2s}{2}}\tilde{u}\|_{L^{2}(B_{8}^{+})}+\|x_{n+1}^{\frac{1-2s}{2}}u_{1}\|_{L^{2}(B_{8}^{+})})^{\tilde{\alpha}}\times\nonumber \\
 & \quad\times(\lim_{x_{n+1}\rightarrow0}\|x_{n+1}^{1-2s}\partial_{n+1}\tilde{u}\|_{H^{-2s}(B_{8}')}+\lim_{x_{n+1}\rightarrow0}\|x_{n+1}^{1-2s}\partial_{n+1}u_{1}\|_{H^{-2s}(B_{8}')})^{1-\tilde{\alpha}},\label{eq:small3}
\end{align}
where $\tilde{\alpha}=\frac{1-s}{1+s}\alpha+\frac{2s}{1+s}$. Then
we have 
\begin{align}
\lim_{x_{n+1}\rightarrow0}\|x_{n+1}^{1-2s}\partial_{n+1}u_{1}\|_{H^{-2s}(\mathbb{R}^{n}\times\{0\})} & =\|(-P)^{s}u_{1}\|_{H^{-2s}(\mathbb{R}^{n}\times\{0\})}\nonumber \\
 & \le C\|u_{1}\|_{L^{2}(\mathbb{R}^{n}\times\{0\})}\le C\|\tilde{u}\|_{L^{2}(B_{16}')},\label{eq:small4}
\end{align}
where the second inequality follows by Lemma~\ref{lem:well2}. 

By combining \eqref{eq:small1}, \eqref{eq:small3} and \eqref{eq:small4},
we reach 
\begin{align}
 & \|x_{n+1}^{\frac{1-2s}{2}}\tilde{u}\|_{L^{2}(B_{c}^{+})}\nonumber \\
\le & C\bigg(\|x_{n+1}^{\frac{1-2s}{2}}\tilde{u}\|_{L^{2}(B_{16}^{+})}+\|\tilde{u}\|_{L^{2}(B_{16}')}\bigg)^{\tilde{\alpha}}\bigg(\lim_{x_{n+1}\rightarrow0}\|x_{n+1}^{1-2s}\partial_{n+1}\tilde{u}\|_{L^{2}(B_{16}')}+\|u\|_{L^{2}(B_{16}')}\bigg)^{1-\tilde{\alpha}},\label{eq:small5}
\end{align}
which is our desired claim of (a). 
\end{proof}
Indeed, by combining \eqref{eq:small5} with the Caccioppoli's inequality
(Lemma~\ref{lem:Caccio}), we reach 
\begin{align}
 & \|x_{n+1}^{\frac{1-2s}{2}}\tilde{u}\|_{L^{2}(B_{\tilde{c}}^{+})}+\|x_{n+1}^{\frac{1-2s}{2}}\nabla\tilde{u}\|_{L^{2}(B_{\tilde{c}}^{+})}\nonumber \\
\le & C\bigg(\|x_{n+1}^{\frac{1-2s}{2}}\tilde{u}\|_{L^{2}(B_{16}^{+})}+\|\tilde{u}\|_{L^{2}(B_{16}')}\bigg)^{\tilde{\alpha}}\bigg(\lim_{x_{n+1}\rightarrow0}\|x_{n+1}^{1-2s}\partial_{n+1}\tilde{u}\|_{L^{2}(B_{16}')}+\|u\|_{L^{2}(B_{16}')}\bigg)^{1-\tilde{\alpha}}\nonumber \\
 & +\lim_{x_{n+1}\rightarrow0}\|x_{n+1}^{1-2s}\partial_{n+1}\tilde{u}\|_{L^{2}(B_{16}')}^{\frac{1}{2}}\|u\|_{L^{2}(B_{16}')}^{\frac{1}{2}}\label{eq:small6}
\end{align}
with $\tilde{c}=c/2$. Slightly modify the proof of \eqref{eq:small3},
we can obtain the following analogue of Proposition 5.11 of \cite{RS20Calderon}: 
\begin{lem}
\label{lem:inter-IV}Let $s\in[\frac{1}{2},1)$ and $\tilde{w}$ is
the Caffarelli-Silvestre type extension of some $f\in H^{\gamma}(\mathbb{R}^{n})$
as in \eqref{eq:Sch-ext}, where $\gamma\in\mathbb{R}$ with $f|_{C_{s,1}'}=0$.
We assume that 
\[
\max_{1\le j,k\le n}\|a_{jk}-\delta_{jk}\|_{\infty}+\max_{1\le j,k\le n}\|\nabla'a_{jk}\|_{\infty}\le\epsilon
\]
for some sufficiently small $\epsilon>0$. For $s\neq\frac{1}{2}$,
we further assume 
\[
\max_{1\le j,k\le n}\|(\nabla')^{2}a_{jk}\|_{\infty}\le C
\]
for some positive constant $C$. Then there exist $C=C(n,s)$ and
$\alpha=\alpha(n,s)\in(0,1)$ such that 
\[
\|x_{n+1}^{\frac{1-2s}{2}}\tilde{w}\|_{L^{2}(C_{s,1/8}^{+})}\le C\|x_{n+1}^{\frac{1-2s}{2}}\tilde{w}\|_{L^{2}(C_{s,1}^{+})}^{\alpha}\cdot\lim_{x_{n+1}\rightarrow0}\|x_{n+1}^{1-2s}\partial_{n+1}\tilde{w}\|_{H^{-s}(C_{s,1/2}')}^{1-\alpha}.
\]
\end{lem}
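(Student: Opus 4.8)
The plan is to re-run the argument that produced \eqref{eq:small3} in the proof of part~(a) of Lemma~\ref{lem:small}, keeping track of the Neumann datum in $H^{-s}$ rather than in $L^{2}$. Since $f$ vanishes on $C_{s,1}'\supset B_{1}'$, the three-ball inequality of the lemma immediately preceding Corollary~\ref{cor:interIII} --- applied, if necessary, with the cylinders rescaled so that the Neumann factor is supported on a cylinder $C_{s,\rho}'$ lying well inside $C_{s,1/2}'$, which only affects the constants --- gives, for some $\alpha_{0}=\alpha_{0}(n,s)\in(0,1)$,
\[
\|x_{n+1}^{\frac{1-2s}{2}}\tilde{w}\|_{L^{2}(C_{s,1/8}^{+})}\le C\|x_{n+1}^{\frac{1-2s}{2}}\tilde{w}\|_{L^{2}(C_{s,1}^{+})}^{\alpha_{0}}\cdot\bigg(\lim_{x_{n+1}\rightarrow0}\|x_{n+1}^{1-2s}\partial_{n+1}\tilde{w}\|_{L^{2}(C_{s,\rho}')}\bigg)^{1-\alpha_{0}},
\]
where the bulk factor has been enlarged from $C_{s,1/2}^{+}$ to $C_{s,1}^{+}$. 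Hence it suffices to estimate the $L^{2}(C_{s,\rho}')$ Neumann norm by an interpolation between a bulk norm of $\tilde{w}$ and the $H^{-s}(C_{s,1/2}')$ Neumann norm.

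To do this I would fix a cut-off $\eta\in\mathcal{C}_{0}^{\infty}(\mathbb{R}_{+}^{n+1})$ equal to $1$ on a neighbourhood of $C_{s,\rho}^{+}$ and supported in $C_{s,1/2}^{+}$, with $|\partial_{n+1}\eta|\le Cx_{n+1}$ and $\partial_{n+1}\eta=0$ on $\mathbb{R}^{n}\times\{0\}$, and plug $w=\eta x_{n+1}^{1-2s}\partial_{n+1}\tilde{w}$ into the trace characterisation Lemma~\ref{lem:inter2(b)} exactly as in \eqref{eq:small2-0}. Since $\eta$ is a bounded multiplier on $H^{2s}(\mathbb{R}^{n})$, hence, by duality, on $H^{-2s}(\mathbb{R}^{n})$, the resulting boundary term is $\le C\lim_{x_{n+1}\rightarrow0}\|x_{n+1}^{1-2s}\partial_{n+1}\tilde{w}\|_{H^{-2s}(C_{s,1/2}')}$, which is in turn $\le C\lim_{x_{n+1}\rightarrow0}\|x_{n+1}^{1-2s}\partial_{n+1}\tilde{w}\|_{H^{-s}(C_{s,1/2}')}$ by the trivial embedding $H^{-s}\hookrightarrow H^{-2s}$; this is how the weaker space $H^{-s}$ enters the statement. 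For the bulk term of Lemma~\ref{lem:inter2(b)} I would copy \eqref{eq:small2-2a}--\eqref{eq:small2-3} verbatim: use the equation to rewrite $\partial_{n+1}(x_{n+1}^{1-2s}\partial_{n+1}\tilde{w})=-x_{n+1}^{1-2s}\sum_{j,k=1}^{n}\partial_{j}(a_{jk}\partial_{k}\tilde{w})$, differentiate the equation in the tangential directions so that $\nabla'\tilde{w}$ solves the same degenerate equation up to the divergence-form perturbation $x_{n+1}^{1-2s}\sum_{j=1}^{n}\partial_{j}\big(\sum_{k=1}^{n}(\nabla'a_{jk})\partial_{k}\tilde{w}\big)$, and apply Caccioppoli's inequality (Lemma~\ref{lem:Caccio}) once to $\tilde{w}$ and once to $\nabla'\tilde{w}$ --- absorbing the perturbation on the left using $\max_{1\le j,k\le n}\|\nabla'a_{jk}\|_{\infty}\le\epsilon$, and using $|\partial_{n+1}\eta|\le Cx_{n+1}$ --- to obtain
\[
\|x_{n+1}^{\frac{2s-1}{2}}\nabla(\eta x_{n+1}^{1-2s}\partial_{n+1}\tilde{w})\|_{L^{2}(\mathbb{R}_{+}^{n+1})}\le C\|x_{n+1}^{\frac{1-2s}{2}}\tilde{w}\|_{L^{2}(C_{s,1}^{+})}.
\]

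Inserting the last two bounds into Lemma~\ref{lem:inter2(b)} and optimising the free parameter $\mu>0$ --- the exponent bookkeeping being identical to that in \eqref{eq:small2-0}--\eqref{eq:small3} --- yields
\[
\lim_{x_{n+1}\rightarrow0}\|x_{n+1}^{1-2s}\partial_{n+1}\tilde{w}\|_{L^{2}(C_{s,\rho}')}\le C\|x_{n+1}^{\frac{1-2s}{2}}\tilde{w}\|_{L^{2}(C_{s,1}^{+})}^{\frac{2s}{1+s}}\cdot\bigg(\lim_{x_{n+1}\rightarrow0}\|x_{n+1}^{1-2s}\partial_{n+1}\tilde{w}\|_{H^{-s}(C_{s,1/2}')}\bigg)^{\frac{1-s}{1+s}},
\]
and substituting this into the three-ball inequality of the first step gives the claim with $\alpha=\frac{1-s}{1+s}\alpha_{0}+\frac{2s}{1+s}\in(0,1)$, that is, the same combination $\tilde{\alpha}$ as in \eqref{eq:small3}. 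I expect the only genuine obstacle to be the bulk estimate for $\nabla(\eta x_{n+1}^{1-2s}\partial_{n+1}\tilde{w})$: this quantity already carries one normal derivative, so controlling its gradient forces control of the full weighted second-order expression $\nabla(\nabla'\tilde{w})$ in the degenerate norm, which is precisely why one may only differentiate the equation tangentially and must exploit the smallness $\|\nabla'a_{jk}\|_{\infty}\le\epsilon$ (and, for $s\neq\frac{1}{2}$, the bound on $(\nabla')^{2}a_{jk}$, which is in any case needed to apply the three-ball inequality of the first step); once this is in place, the nesting of the cylinders $C_{s,r}$ is handled exactly as in \cite{RS20}.
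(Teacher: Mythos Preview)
Your proposal is correct and follows essentially the same route as the paper: apply the three-ball inequality from the lemma preceding Corollary~\ref{cor:interIII}, then control the $L^{2}$ Neumann factor by a trace interpolation combined with the Caccioppoli estimates \eqref{eq:small2-2a}--\eqref{eq:small2-3}, exactly as in the derivation of \eqref{eq:small3}. The only (harmless) difference is that the paper invokes Lemma~\ref{lem:inter2(a)} directly, which already outputs the $H^{-s}$ norm, whereas you use Lemma~\ref{lem:inter2(b)} and then the embedding $H^{-s}\hookrightarrow H^{-2s}$; this yields a different but equally admissible exponent $\alpha\in(0,1)$.
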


\begin{proof}
Let $\eta$ be a smooth cut-off function supported in $C_{s,1/2}^{+}$ with $\eta=1$ in $C_{s,1/4}^{+}$. Using this cut-off function, and following the ideas in the proof of \eqref{eq:small3}, by using Lemma~\ref{lem:inter2(a)} rather than Lemma~\ref{lem:inter2(b)}, we can obtain the above inequality. 
\end{proof}

\subsection{Proof of the part~\ref{itm:part-a:lem:small} of Lemma \ref{lem:small} for the case $s\in(0,1/2)$}

Let $\tilde{w}$ solves \eqref{eq:Sch-ext}. If we define $\overline{s}:=1-s\in(1/2,1)$,
\begin{equation}
v(x)=x_{n+1}^{1-2s}\partial_{n+1}\tilde{w}(x)\quad\text{and}\quad f=\lim_{x_{n+1}\rightarrow0}x_{n+1}^{1-2s}\partial_{n+1}\tilde{w}=c^{-1}_{s}(-P)^{s}u,\label{eq:0small1}
\end{equation}
then 
\begin{align*}
\bigg[\partial_{n+1}x_{n+1}^{1-2\overline{s}}\partial_{n+1}+x_{n+1}^{1-2\overline{s}}\sum_{j,k=1}^{n}\partial_{j}a_{jk}\partial_{k}\bigg]v & =0\quad\text{in }\mathbb{R}_{+}^{n+1},\\
v & =f\quad\text{on }\mathbb{R}^{n}\times\{0\}.
\end{align*}
Using this observation, and follows the ideas in \cite[Proposition 5.12]{RS20Calderon}, we can obtain an analogue of Lemma~\ref{lem:inter-IV}: 
\begin{lem}
Let $s\in(0,1/2)$ and let $x_{0}\in\mathbb{R}^{n}\times\{0\}$. Suppose
\begin{align*}
\bigg[\partial_{n+1}x_{n+1}^{1-2s}\partial_{n+1}+x_{n+1}^{1-2s}\sum_{j,k=1}^{n}\partial_{j}a_{jk}\partial_{k}\bigg]\tilde{w} & =0\quad\text{in }\mathbb{R}_{+}^{n+1},\\
\tilde{w} & =w\quad\text{on }\mathbb{R}^{n}\times\{0\},
\end{align*}
with $w=0$ on $C_{\overline{s},2}'$. We assume that 
\[
\max_{1\le j,k\le n}\|a_{jk}-\delta_{jk}\|_{\infty}+\max_{1\le j,k\le n}\|\nabla'a_{jk}\|_{\infty}\le\epsilon
\]
for some sufficiently small $\epsilon>0$. We further assume 
\[
\max_{1\le j,k\le n}\|(\nabla')^{2}a_{jk}\|_{\infty}\le C
\]
for some positive constant $C$. Then there exist $C=C(n,s)$ and
$\alpha=\alpha(n,s)\in(0,1)$ such that 
\begin{align*}
 & \|x_{n+1}^{\frac{1-2s}{2}}\tilde{w}\|_{L^{2}(C_{\overline{s},1/8}^{+})}\\
\le & C\max\{\|x_{n+1}^{\frac{1-2s}{2}}\tilde{w}\|_{L^{2}(C_{\overline{s},2}^{+})},\lim_{x_{n+1}\rightarrow0}\|x_{n+1}^{1-2s}\partial_{n+1}\tilde{w}\|_{H^{-s}(C_{\overline{s},2}')}\}^{\alpha}\lim_{x_{n+1}\rightarrow0}\|x_{n+1}^{1-2s}\partial_{n+1}\tilde{w}\|_{H^{-s}(C_{\overline{s},2}')}^{1-\alpha}.
\end{align*}
\end{lem}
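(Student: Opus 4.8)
The strategy is the conjugation already set up in \eqref{eq:0small1}--\eqref{eq:decomp}: with $\overline{s}:=1-s\in(\tfrac12,1)$, the function $v:=x_{n+1}^{1-2s}\partial_{n+1}\tilde{w}$ is a genuine Caffarelli--Silvestre extension for the exponent $\overline{s}$ (it solves the equation with weight $x_{n+1}^{1-2\overline{s}}=x_{n+1}^{2s-1}$), its Dirichlet trace is $f:=\lim_{x_{n+1}\to0}v=c_{n,s}^{-1}(-P)^{s}w$, and, differentiating the equation for $\tilde{w}$ in $x_{n+1}$ and using \eqref{eq:decomp}, its weighted Neumann trace satisfies $\lim_{x_{n+1}\to0}x_{n+1}^{1-2\overline{s}}\partial_{n+1}v=-c\,Pw$ for a nonzero constant $c$, which \emph{vanishes on $C_{\overline{s},2}'$} because $P$ is a local operator and $w|_{C_{\overline{s},2}'}=0$. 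Moreover $v\in L^{2}(\mathbb{R}_{+}^{n+1},x_{n+1}^{1-2\overline{s}})$ since $\tilde{w}\in H^{1}(\mathbb{R}_{+}^{n+1},x_{n+1}^{1-2s})$. The plan is to prove a three-ball inequality for $v$ using the already-established $\overline{s}\in[\tfrac12,1)$ machinery (namely Lemma~\ref{lem:inter-IV}), and to transfer it to $\tilde{w}$ via the fundamental theorem of calculus, imitating \cite[Proposition 5.12]{RS20}.

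\emph{Transfer between $\tilde{w}$ and $v$.} Since $w=0$ on $C_{\overline{s},2}'\supset C_{\overline{s},1/4}'$ and $C_{\overline{s},1/8}^{+}\subset C_{\overline{s},1/4}^{+}$, for $(x',x_{n+1})\in C_{\overline{s},1/8}^{+}$ one has $\tilde{w}(x',x_{n+1})=\int_{0}^{x_{n+1}}t^{2s-1}v(x',t)\,dt$. A Cauchy--Schwarz estimate in $t$ followed by Fubini (a one-dimensional weighted Hardy inequality, using $2s-1=1-2\overline{s}$ and that the $x_{n+1}$-section of $C_{\overline{s},1/8}^{+}$ is bounded) gives
\[
\|x_{n+1}^{\frac{1-2s}{2}}\tilde{w}\|_{L^{2}(C_{\overline{s},1/8}^{+})}\le C\|x_{n+1}^{\frac{1-2\overline{s}}{2}}v\|_{L^{2}(C_{\overline{s},1/8}^{+})}.
\]
Conversely, $\|x_{n+1}^{\frac{1-2\overline{s}}{2}}v\|_{L^{2}(C_{\overline{s},1}^{+})}=\|x_{n+1}^{\frac{1-2s}{2}}\partial_{n+1}\tilde{w}\|_{L^{2}(C_{\overline{s},1}^{+})}$, and Caccioppoli's inequality (Lemma~\ref{lem:Caccio}) for $\tilde{w}$ on $C_{\overline{s},1}^{+}$ bounds this by $C\|x_{n+1}^{\frac{1-2s}{2}}\tilde{w}\|_{L^{2}(C_{\overline{s},2}^{+})}$, the boundary contribution in Caccioppoli vanishing because $w\equiv0$ on $C_{\overline{s},2}'$. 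Hence it suffices to prove the asserted inequality with $\tilde{w},x_{n+1}^{1-2s}$ replaced by $v,x_{n+1}^{1-2\overline{s}}$ throughout.

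\emph{Three-ball inequality for $v$.} Fix $\chi\in\mathcal{C}_{0}^{\infty}(C_{\overline{s},2}')$ with $\chi\equiv1$ on $C_{\overline{s},3/2}'$, write $f=\chi f+(1-\chi)f$, and split $v=v_{1}+v_{2}$ accordingly ($v_{i}$ the $\overline{s}$-extension of the $i$-th piece). Since $(1-\chi)f$ vanishes on $C_{\overline{s},3/2}'\supset C_{\overline{s},1}'$, Lemma~\ref{lem:inter-IV} with $s$ replaced by $\overline{s}$ applies to $v_{2}$:
\[
\|x_{n+1}^{\frac{1-2\overline{s}}{2}}v_{2}\|_{L^{2}(C_{\overline{s},1/8}^{+})}\le C\|x_{n+1}^{\frac{1-2\overline{s}}{2}}v_{2}\|_{L^{2}(C_{\overline{s},1}^{+})}^{\alpha}\,\big(\lim_{x_{n+1}\to0}\|x_{n+1}^{1-2\overline{s}}\partial_{n+1}v_{2}\|_{H^{-\overline{s}}(C_{\overline{s},1/2}')}\big)^{1-\alpha}.
\]
For the bulk factor, $\|x_{n+1}^{\frac{1-2\overline{s}}{2}}v_{2}\|_{L^{2}(C_{\overline{s},1}^{+})}\le\|x_{n+1}^{\frac{1-2\overline{s}}{2}}v\|_{L^{2}(C_{\overline{s},1}^{+})}+\|x_{n+1}^{\frac{1-2\overline{s}}{2}}v_{1}\|_{L^{2}(C_{\overline{s},1}^{+})}$; the first term is $\le C\|x_{n+1}^{\frac{1-2s}{2}}\tilde{w}\|_{L^{2}(C_{\overline{s},2}^{+})}=:CA$ by the transfer step, and the second, since $\overline{s}>\tfrac12$, is bounded by a boundary-to-bulk (trace) estimate for the $\overline{s}$-extension by $C\|\chi f\|_{H^{-\overline{s}}(\mathbb{R}^{n})}\le C\|\chi f\|_{H^{-s}(\mathbb{R}^{n})}\le C\lim_{x_{n+1}\to0}\|x_{n+1}^{1-2s}\partial_{n+1}\tilde{w}\|_{H^{-s}(C_{\overline{s},2}')}=:CB$, using $H^{-s}\hookrightarrow H^{-\overline{s}}$ (valid as $s\le\tfrac12$) and that $\chi$ is an $H^{s}$-multiplier, exactly as in the derivation of \eqref{eq:small2-1}. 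For the Neumann factor, on $C_{\overline{s},1/2}'$ we have $\lim_{x_{n+1}\to0}x_{n+1}^{1-2\overline{s}}\partial_{n+1}v_{2}=\lim_{x_{n+1}\to0}x_{n+1}^{1-2\overline{s}}\partial_{n+1}v-c'(-P)^{\overline{s}}(\chi f)=-c'(-P)^{\overline{s}}(\chi f)$ there, since the first term vanishes on $C_{\overline{s},2}'$; because $\chi f$ is supported in $C_{\overline{s},2}'$, this is controlled in $H^{-\overline{s}}(C_{\overline{s},1/2}')$ by $C\lim_{x_{n+1}\to0}\|x_{n+1}^{1-2s}\partial_{n+1}\tilde{w}\|_{H^{-s}(C_{\overline{s},2}')}=CB$ via the mapping and off-diagonal kernel estimates for $(-P)^{\overline{s}}$ from Section~\ref{sec:definition}. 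Combining these bounds, $\|x_{n+1}^{\frac{1-2\overline{s}}{2}}v_{2}\|_{L^{2}(C_{\overline{s},1/8}^{+})}\le C(A+B)^{\alpha}B^{1-\alpha}$; adding the trace bound $\|x_{n+1}^{\frac{1-2\overline{s}}{2}}v_{1}\|_{L^{2}(C_{\overline{s},1/8}^{+})}\le CB\le CB^{\alpha}B^{1-\alpha}$ and invoking the transfer step gives
\[
\|x_{n+1}^{\frac{1-2s}{2}}\tilde{w}\|_{L^{2}(C_{\overline{s},1/8}^{+})}\le C\max\{A,B\}^{\alpha}B^{1-\alpha},
\]
which is the claim after relabeling $\alpha$.

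The hard part is the bound on the Neumann factor: one must show that the \emph{nonlocal} operator $(-P)^{\overline{s}}$ applied to the localized rough datum $\chi f$ and restricted to $C_{\overline{s},1/2}'$ is controlled by the \emph{local} norm $\lim_{x_{n+1}\to 0}\|x_{n+1}^{1-2s}\partial_{n+1}\tilde w\|_{H^{-s}(C_{\overline{s},2}')}$, which requires the precise mapping properties of $(-P)^{\overline{s}}$ (including on negative Sobolev scales) furnished by the Balakrishnan calculus in Section~\ref{sec:definition} together with the off-diagonal decay of its kernel; this is exactly where the localization philosophy of \cite{RS20} enters. A secondary technical nuisance is keeping the weighted Hardy and Caccioppoli constants uniform as $s\downarrow0$ (equivalently $\overline{s}\uparrow1$), and one should note that the regularity hypothesis $\max_{1\le j,k\le n}\|(\nabla')^{2}a_{jk}\|_{\infty}\le C$ is used precisely to apply Lemma~\ref{lem:inter-IV} at the parameter $\overline{s}\in(\tfrac12,1)$.
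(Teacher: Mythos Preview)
Your overall strategy coincides with the paper's: pass to $v=x_{n+1}^{1-2s}\partial_{n+1}\tilde{w}$, split off a localized piece of its Dirichlet datum $f$, apply Lemma~\ref{lem:inter-IV} at parameter $\overline{s}$ to the remainder, and transfer back to $\tilde{w}$ via Poincar\'e/Caccioppoli. The identification of the Neumann trace of $v$ with $-cPw$ (hence zero on $C_{\overline{s},2}'$) is also exactly how the paper proceeds.

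The genuine gap is in your treatment of the Neumann factor and, symmetrically, of the bulk bound for $v_{1}$. You assert that
\[
\|(-P)^{\overline{s}}(\chi f)\|_{H^{-\overline{s}}(C_{\overline{s},1/2}')}\le C\|f\|_{H^{-s}(C_{\overline{s},2}')},
\]
invoking ``mapping and off-diagonal kernel estimates'' from Section~\ref{sec:definition}. This cannot hold: $(-P)^{\overline{s}}$ has order $2\overline{s}=2-2s$, so it maps $H^{-s}\to H^{s-2}$, a full derivative rougher than the $H^{-\overline{s}}=H^{s-1}$ you need; and off-diagonal decay is irrelevant because $C_{\overline{s},1/2}'$ lies \emph{inside} $\operatorname{supp}(\chi f)$, not outside it. The only mapping actually available is Lemma~\ref{lem:well1}, giving $\|(-P)^{\overline{s}}(\chi f)\|_{H^{-\overline{s}}}\le C\|\chi f\|_{H^{\overline{s}}}=C\|\chi f\|_{H^{1-s}}$ (this is \eqref{eq:0small1-1}); likewise the bulk estimate for $v_{1}$ from \eqref{eq:apriori} only yields $\|x_{n+1}^{\frac{1-2\overline{s}}{2}}v_{1}\|_{L^{2}}\le C\|\chi f\|_{L^{2}}\le C\|\chi f\|_{H^{1-s}}$, not $C\|\chi f\|_{H^{-\overline{s}}}$ as you claim.

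The missing ingredient is therefore a bound on $\|\chi f\|_{H^{1-s}}$ in terms of $A=\|x_{n+1}^{\frac{1-2s}{2}}\tilde{w}\|_{L^{2}(C_{\overline{s},2}^{+})}$ and $B=\|f\|_{H^{-s}(C_{\overline{s},2}')}$. The paper supplies this via an \emph{iterated} trace interpolation (equations \eqref{eq:0small4}--\eqref{eq:0small6}): one applies Lemma~\ref{lem:inter2(a)} to $\langle D'\rangle^{\beta}(\eta v)$ and uses Caccioppoli-type bounds on $v$ and $\nabla' v$ to obtain
\[
\|\eta f\|_{H^{\beta}}\le C\,A^{s}\,\|\eta f\|_{H^{\beta-s}}^{1-s},
\]
then iterates in $\beta$ starting from $\beta=1-s$ until $\beta-ks<-s$, yielding $\|\eta f\|_{H^{1-s}}\le C A^{\gamma}B^{1-\gamma}$ for some $\gamma\in(0,1)$. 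This bootstrap, which trades Sobolev regularity of $f$ for powers of the bulk norm $A$, is the technical heart of the proof in the regime $s<\tfrac12$ and is absent from your argument; you have correctly flagged the Neumann bound as ``the hard part'' but your proposed resolution does not work.
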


\begin{proof}
Let $v$ and $f$ as in \eqref{eq:0small1}. Let $\tilde{v}$ be
the Caffarelli-Silvestre type extension of $\eta f$ as in \eqref{eq:Sch-ext},
where $\eta$ is a cut-off function satisfies 
\[
\eta=\begin{cases}
1 & \text{in }C_{\overline{s},1}^{+},\\
0 & \text{outside }C_{\overline{s},2}^{+},
\end{cases}
\]
with $|\partial_{n+1}\eta|\le Cx_{n+1}$. As consequences, the function
$\overline{v}:=v-\tilde{v}$ is the Caffarelli-Silvestre extension
of $(1-\eta)f$ and solves 
\begin{align*}
\bigg[\partial_{n+1}x_{n+1}^{1-2s}\partial_{n+1}+x_{n+1}^{1-2s}\sum_{j,k=1}^{n}\partial_{j}a_{jk}\partial_{k}\bigg]\overline{v} & =0\quad\text{in }\mathbb{R}_{+}^{n+1},\\
\overline{v} & =0\quad\text{on }C_{\overline{s},1}'.
\end{align*}
Hence, by Lemma~\ref{lem:inter-IV} and since $\overline{s}=1-s$,
we have 
\begin{align*}
\|x_{n+1}^{\frac{1-2\overline{s}}{2}}\overline{v}\|_{L^{2}(C_{\overline{s},1/8}^{+})} & \le C\|x_{n+1}^{\frac{1-2\overline{s}}{2}}\overline{v}\|_{L^{2}(C_{\overline{s},1}^{+})}^{\alpha}\cdot\lim_{x_{n+1}\rightarrow0}\|x_{n+1}^{1-2\overline{s}}\partial_{n+1}\overline{v}\|_{H^{-\overline{s}}(C_{\overline{s},1/2}')}^{1-\alpha}\\
 & =C\|x_{n+1}^{\frac{1-2\overline{s}}{2}}\overline{v}\|_{L^{2}(C_{\overline{s},1}^{+})}^{\alpha}\cdot\lim_{x_{n+1}\rightarrow0}\|x_{n+1}^{1-2\overline{s}}\partial_{n+1}\overline{v}\|_{H^{-1+s}(C_{\overline{s},1/2}')}^{1-\alpha}.
\end{align*}
Since $\tilde{w}=0$ on $C_{\overline{s},2}'$, thus 
\begin{align*}
\lim_{x_{n+1}\rightarrow0}x_{n+1}^{1-2\overline{s}}\partial_{n+1}v\bigg|_{C_{\overline{s},1/2}'} & =\lim_{x_{n+1}\rightarrow0}x_{n+1}^{1-2\overline{s}}(\partial_{n+1}x_{n+1}^{1-2s}\partial_{n+1}\tilde{w})\bigg|_{C_{\overline{s},1/2}'}\\
 & =-\lim_{x_{n+1}\rightarrow0}\sum_{j,k=1}^{n}\partial_{j}a_{jk}\partial_{k}\tilde{w}\bigg|_{C_{\overline{s},1/2}'}=0.
\end{align*}
Hence, 
\[
\lim_{x_{n+1}\rightarrow0}x_{n+1}^{1-2\overline{s}}\partial_{n+1}\overline{v}\bigg|_{C_{\overline{s},1/2}'}=\lim_{x_{n+1}\rightarrow0}x_{n+1}^{1-2\overline{s}}\partial_{n+1}\tilde{v}\bigg|_{C_{\overline{s},1/2}'},
\]
and thus 
\[
\|x_{n+1}^{\frac{1-2\overline{s}}{2}}\overline{v}\|_{L^{2}(C_{\overline{s},1/8}^{+})}\le C\|x_{n+1}^{\frac{1-2\overline{s}}{2}}\overline{v}\|_{L^{2}(C_{\overline{s},1}^{+})}^{\alpha}\cdot\lim_{x_{n+1}\rightarrow0}\|x_{n+1}^{1-2\overline{s}}\partial_{n+1}\tilde{v}\|_{H^{-1+s}(C_{\overline{s},1/2}')}^{1-\alpha}.
\]
Using  $\lim_{x_{n+1}\rightarrow0}x_{n+1}^{1-2\overline{s}}\partial_{n+1}\tilde{v}=-c_{\overline{s}}(-P)^{\overline{s}}(\eta f)=-c_{\overline{s}}(-P)^{1-s}(\eta f)$, we have 
\begin{equation}
\lim_{x_{n+1}\rightarrow0}\|x_{n+1}^{1-2\overline{s}}\partial_{n+1}\tilde{v}\|_{H^{-1+s}(C_{\overline{s},1/2}')}\le C\|(-P)^{1-s}(\eta f)\|_{H^{-1+s}(\mathbb{R}^{n})}\le C\|\eta f\|_{H^{1-s}(\mathbb{R}^{n})},\label{eq:0small1-1}
\end{equation}
where the last inequality follows by Lemma~\ref{lem:well1}. Thus,
\begin{equation}
\|x_{n+1}^{\frac{1-2\overline{s}}{2}}\overline{v}\|_{L^{2}(C_{\overline{s},1/8}^{+})}\le C\|x_{n+1}^{\frac{1-2\overline{s}}{2}}\overline{v}\|_{L^{2}(C_{\overline{s},1}^{+})}^{\alpha}\|\eta f\|_{H^{1-s}(\mathbb{R}^{n})}^{1-\alpha}.\label{eq:0small2}
\end{equation}
Firstly, we estimate the right hand side of \eqref{eq:0small2} by 
\begin{align*}
\|x_{n+1}^{\frac{1-2\overline{s}}{2}}\overline{v}\|_{L^{2}(C_{\overline{s},1}^{+})}\le & \|x_{n+1}^{\frac{1-2\overline{s}}{2}}v\|_{L^{2}(C_{\overline{s},1}^{+})}+\|x_{n+1}^{\frac{1-2\overline{s}}{2}}\tilde{v}\|_{L^{2}(C_{\overline{s},1}^{+})}\\
\le & \|x_{n+1}^{\frac{1-2\overline{s}}{2}}v\|_{L^{2}(C_{\overline{s},1}^{+})}+C\|\eta f\|_{H^{\overline{s}}(\mathbb{R}^{n}\times\{0\})}\\
= & \|x_{n+1}^{\frac{1-2s}{2}}\partial_{n+1}\tilde{w}\|_{L^{2}(C_{\overline{s},1}^{+})}+C\|\eta f\|_{H^{1-s}(\mathbb{R}^{n}\times\{0\})}\\
\le & C\bigg[\|x_{n+1}^{\frac{1-2s}{2}}\tilde{w}\|_{L^{2}(C_{\overline{s},2}^{+})}+\|\eta f\|_{H^{1-s}(\mathbb{R}^{n}\times\{0\})}\bigg],
\end{align*}
where the second inequality follows by \eqref{eq:apriori} and the
last one is followed by the Caccioppoli's inequality in Lemma~\ref{lem:Caccio}.
Similarly, we can estimate the left hand side of \eqref{eq:0small2}
by 
\begin{align*}
\|x_{n+1}^{\frac{1-2\overline{s}}{2}}\overline{v}\|_{L^{2}(C_{\overline{s},1/8}^{+})}\ge & \|x_{n+1}^{\frac{1-2\overline{s}}{2}}v\|_{L^{2}(C_{\overline{s},1/8}^{+})}-\|x_{n+1}^{\frac{1-2\overline{s}}{2}}\tilde{v}\|_{L^{2}(C_{\overline{s},1/8}^{+})}\\
\ge & \|x_{n+1}^{\frac{1-2s}{2}}\partial_{n+1}\tilde{w}\|_{L^{2}(C_{\overline{s},1/8}^{+})}-C\|\eta f\|_{H^{1-s}(\mathbb{R}^{n}\times\{0\})}\\
\ge & c\|x_{n+1}^{\frac{1-2s}{2}}\tilde{w}\|_{L^{2}(C_{\overline{s},1/8}^{+})}-C\|\eta f\|_{H^{1-s}(\mathbb{R}^{n}\times\{0\})},
\end{align*}
where the last inequality is followed by Poincar\'{e} inequality. Thus, \eqref{eq:0small2} becomes 
\begin{equation}
\|x_{n+1}^{\frac{1-2s}{2}}\tilde{w}\|_{L^{2}(C_{\overline{s},1/8}^{+})}\le C\bigg[\|x_{n+1}^{\frac{1-2s}{2}}\tilde{w}\|_{L^{2}(C_{\overline{s},2}^{+})}+\|\eta f\|_{H^{1-s}(\mathbb{R}^{n}\times\{0\})}\bigg]^{\alpha}\|\eta f\|_{H^{1-s}(\mathbb{R}^{n})}^{1-\alpha}.\label{eq:0small3}
\end{equation}

Next, we estimate the boundary contribution $\|\eta f\|_{H^{1-s}(\mathbb{R}^{n}\times\{0\})}$.
Using the interpolation inequality in Lemma \ref{lem:inter2(a)}, we have 
\begin{align*}
\|\eta f\|_{H^{\beta}(\mathbb{R}^{n}\times\{0\})}= & \|\langle D'\rangle^{\beta}\eta f\|_{L^{2}(\mathbb{R}^{n}\times\{0\})}\\
\le & C\mu^{1-s}\bigg(\|x_{n+1}^{\frac{2s-1}{2}}\langle D'\rangle^{\beta}(\eta v)\|_{L^{2}(\mathbb{R}_{+}^{n+1})}+\|x_{n+1}^{\frac{2s-1}{2}}\nabla(\langle D'\rangle^{\beta}(\eta v))\|_{L^{2}(\mathbb{R}_{+}^{n+1})}\bigg)\\
 & +C\mu^{-s}\|\langle D'\rangle^{\beta}(\eta f)\|_{H^{-s}(\mathbb{R}^{n}\times\{0\})}.
\end{align*}
Using $\|\langle D'\rangle^{\beta}u\|_{L^{2}}\le\|u\|_{L^{2}}+\|\nabla' u\|_{L^{2}}$
for $\beta\le1$, we have 
\begin{align*}
\|x_{n+1}^{\frac{2s-1}{2}}\langle D'\rangle^{\beta}(\eta v)\|_{L^{2}(\mathbb{R}_{+}^{n+1})}\le & \|x_{n+1}^{\frac{2s-1}{2}}\eta v\|_{L^{2}(\mathbb{R}_{+}^{n+1})}+\|x_{n+1}^{\frac{2s-1}{2}}\nabla'(\eta v)\|_{L^{2}(\mathbb{R}_{+}^{n+1})}\\
\|x_{n+1}^{\frac{2s-1}{2}}\nabla(\langle D'\rangle^{\beta}(\eta v))\|_{L^{2}(\mathbb{R}_{+}^{n+1})}\le & \|x_{n+1}^{\frac{2s-1}{2}}\nabla(\eta v)\|_{L^{2}(\mathbb{R}_{+}^{n+1})}+\|x_{n+1}^{\frac{2s-1}{2}}\nabla\nabla'(\eta v)\|_{L^{2}(\mathbb{R}_{+}^{n+1})}.
\end{align*}
Using \eqref{eq:small2-2a} and \eqref{eq:small2-2b}, we know that
\[
\|x_{n+1}^{\frac{2s-1}{2}}\langle D'\rangle^{\beta}(\eta v)\|_{L^{2}(\mathbb{R}_{+}^{n+1})}+\|x_{n+1}^{\frac{2s-1}{2}}\nabla(\langle D'\rangle^{\beta}(\eta v))\|_{L^{2}(\mathbb{R}_{+}^{n+1})}\le C\|x_{n+1}^{\frac{2s-1}{2}}\tilde{w}\|_{L^{2}(C_{\overline{s},2}^{+})},
\]
hence 
\begin{equation}
\|\eta f\|_{H^{\beta}(\mathbb{R}^{n}\times\{0\})}\le C\bigg[\mu^{1-s}\|x_{n+1}^{\frac{2s-1}{2}}\tilde{w}\|_{L^{2}(C_{\overline{s},2}^{+})}+\mu^{-s}\|\eta f\|_{H^{\beta-s}(\mathbb{R}^{n}\times\{0\})}\bigg].\label{eq:0small4}
\end{equation}

Choosing $\mu>0$ in \eqref{eq:0small4} such that the right contributions
become equal, i.e. 
\[
\mu=\frac{\|\eta f\|_{H^{\beta-s}(\mathbb{R}^{n}\times\{0\})}}{\|x_{n+1}^{\frac{2s-1}{2}}\tilde{w}\|_{L^{2}(C_{\overline{s},2}^{+})}}.
\]
Here, using unique continuation, we notice  $\|x_{n+1}^{\frac{2s-1}{2}}\tilde{w}\|_{L^{2}(C_{\overline{s},2}^{+})}\neq0$,
unless $\tilde{w}$ vanishes globally. Using this choice of $\mu>0$,
we reach the multiplicative estimate 
\begin{equation}
\|\eta f\|_{H^{\beta}(\mathbb{R}^{n}\times\{0\})}\le C\|x_{n+1}^{\frac{2s-1}{2}}\tilde{w}\|_{L^{2}(C_{\overline{s},2}^{+})}^{s}\|\eta f\|_{H^{\beta-s}(\mathbb{R}^{n}\times\{0\})}^{1-s}.\label{eq:0small5}
\end{equation}
Starting from $\beta=1-s$, if we iterate \eqref{eq:0small5} for
$k$ times, we reach 
\[
\|\eta f\|_{H^{1-s}(\mathbb{R}^{n}\times\{0\})}\le C\|x_{n+1}^{\frac{2s-1}{2}}\tilde{w}\|_{L^{2}(C_{\overline{s},2}^{+})}^{\gamma}\|\eta f\|_{H^{1-s-ks}(\mathbb{R}^{n}\times\{0\})}^{1-\gamma}.
\]
Choose $k\in\mathbb{N}$ be the smallest integer such that $1-ks<0$,
we reach 
\begin{align}
\|\eta f\|_{H^{1-s}(\mathbb{R}^{n}\times\{0\})} & \le C\|x_{n+1}^{\frac{2s-1}{2}}\tilde{w}\|_{L^{2}(C_{\overline{s},2}^{+})}^{\gamma}\|\eta f\|_{H^{-s}(\mathbb{R}^{n}\times\{0\})}^{1-\gamma}\nonumber \\
 & \le C\|x_{n+1}^{\frac{2s-1}{2}}\tilde{w}\|_{L^{2}(C_{\overline{s},2}^{+})}^{\gamma}\|f\|_{H^{-s}(C_{\overline{s},2}')}^{1-\gamma}.\label{eq:0small6}
\end{align}
Inserting \eqref{eq:0small6} into \eqref{eq:0small3} gives our desired
result. 
\end{proof}
For our purpose, we only need the following version of inequality: 
\begin{cor}
\label{cor:inter-V}Let $s\in(0,1/2)$ and let $x_{0}\in\mathbb{R}^{n}\times\{0\}$.
Suppose 
\begin{align*}
\bigg[\partial_{n+1}x_{n+1}^{1-2s}\partial_{n+1}+x_{n+1}^{1-2s}\sum_{j,k=1}^{n}\partial_{j}a_{jk}\partial_{k}\bigg]\tilde{w} & =0\quad\text{in }\mathbb{R}_{+}^{n+1},\\
\tilde{w} & =w\quad\text{on }\mathbb{R}^{n}\times\{0\},
\end{align*}
with $w=0$ on $C_{\overline{s},2}'$. We assume that 
\[
\max_{1\le j,k\le n}\|a_{jk}-\delta_{jk}\|_{\infty}+\max_{1\le j,k\le n}\|\nabla'a_{jk}\|_{\infty}\le\epsilon
\]
for some sufficiently small $\epsilon>0$. We further assume 
\[
\max_{1\le j,k\le n}\|(\nabla')^{2}a_{jk}\|_{\infty}\le C
\]
for some positive constant $C$. Then there exist $C=C(n,s)$, $c=c(n,s)$
and $\alpha=\alpha(n,s)\in(0,1)$ such that 
\begin{align*}
 & \|x_{n+1}^{\frac{1-2s}{2}}\tilde{w}\|_{L^{2}(B_{c}^{+})}\\
\le & C\max\{\|x_{n+1}^{\frac{1-2s}{2}}\tilde{w}\|_{L^{2}(B_{2}^{+})},\lim_{x_{n+1}\rightarrow0}\|x_{n+1}^{1-2s}\partial_{n+1}\tilde{w}\|_{H^{-s}(B_{2}')}\}^{\alpha}\lim_{x_{n+1}\rightarrow0}\|x_{n+1}^{1-2s}\partial_{n+1}\tilde{w}\|_{H^{-s}(B_{2}')}^{1-\alpha}\\
\le & C\bigg[\|x_{n+1}^{\frac{1-2s}{2}}\tilde{w}\|_{L^{2}(B_{2}^{+})}^{\alpha}\cdot\lim_{x_{n+1}\rightarrow0}\|x_{n+1}^{1-2s}\partial_{n+1}\tilde{w}\|_{H^{-s}(B_{2}')}^{1-\alpha}+\lim_{x_{n+1}\rightarrow0}\|x_{n+1}^{1-2s}\partial_{n+1}\tilde{w}\|_{H^{-s}(B_{2}')}\bigg].
\end{align*}
\end{cor}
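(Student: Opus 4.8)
The plan is to read this corollary off from the lemma proved immediately above, via two elementary reductions: a geometric comparison between the cone-type sets $C_{\overline s,r}^{\pm}$ and half-balls, and the scalar inequality $\max\{a,b\}^{\alpha}b^{1-\alpha}\le a^{\alpha}b^{1-\alpha}+b$.

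First I would observe that the hypotheses here are exactly those of the preceding lemma (the same degenerate equation, the same condition $w=0$ on $C_{\overline s,2}'$, and the same bounds on $a_{jk}-\delta_{jk}$, $\nabla'a_{jk}$ and $(\nabla')^{2}a_{jk}$), so that lemma already provides
\[
\|x_{n+1}^{\frac{1-2s}{2}}\tilde w\|_{L^{2}(C_{\overline s,1/8}^{+})}\le C\,\max\bigl\{\|x_{n+1}^{\frac{1-2s}{2}}\tilde w\|_{L^{2}(C_{\overline s,2}^{+})},\,N\bigr\}^{\alpha}N^{1-\alpha},
\]
where $N:=\lim_{x_{n+1}\to0}\|x_{n+1}^{1-2s}\partial_{n+1}\tilde w\|_{H^{-s}(C_{\overline s,2}')}$. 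Next I would fix $c=c(n,s)\in(0,1)$ small enough that $\tfrac18-\tfrac{|x'|^{2}}{4}\ge\tfrac1{16}$ on $B_{c}^{+}$ and $c\le[(1-\overline s)/16]^{1/(2-2\overline s)}$; then the defining inequality of $C_{\overline s,1/8}^{+}$ holds on $B_{c}^{+}$, so $B_{c}^{+}\subset C_{\overline s,1/8}^{+}$. On the other hand, $C_{\overline s,2}^{+}$ and $C_{\overline s,2}'$ have $x'$-projection inside $\{|x'|\le2\sqrt2\}$ and bounded $x_{n+1}$, hence sit inside half-balls of a fixed radius, which we keep denoting $B_{2}^{+}$, $B_{2}'$ as in the statement. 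Since the $L^{2}$-norm and the localized $H^{-s}$-norm are monotone under restriction of the domain — for $H^{-s}$ via the duality/extension characterization, as the admissible test functions (resp.\ extensions) for the smaller set form a subfamily of those for the larger — the previous display upgrades to the first asserted inequality, with $\widetilde N:=\lim_{x_{n+1}\to0}\|x_{n+1}^{1-2s}\partial_{n+1}\tilde w\|_{H^{-s}(B_{2}')}$ replacing $N$.

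For the second asserted inequality I would apply $\max\{a,b\}^{\alpha}b^{1-\alpha}\le a^{\alpha}b^{1-\alpha}+b$ (valid for $a,b\ge0$ and $\alpha\in(0,1)$: the left side equals $a^{\alpha}b^{1-\alpha}$ when $a\ge b$ and equals $b$ otherwise) with $a=\|x_{n+1}^{\frac{1-2s}{2}}\tilde w\|_{L^{2}(B_{2}^{+})}$ and $b=\widetilde N$. The only point needing genuine care is the bookkeeping of the inclusions between the cones $C_{\overline s,r}^{\pm}$ and the half-balls, together with the routine verification of the monotonicity of the localized $H^{-s}$-norm; all of the analytic substance is already contained in the preceding lemma, which itself rested on Lemma~\ref{lem:inter-IV} and the factorization \eqref{eq:decomp} of $-P$.
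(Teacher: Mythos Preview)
Your proposal is correct and matches what the paper does: the paper states this corollary without proof, as an immediate simplification of the preceding lemma, and your argument makes explicit precisely the two routine ingredients needed (the geometric inclusions between the cone-type sets $C_{\overline s,r}^{\pm}$ and half-balls, and the scalar inequality $\max\{a,b\}^{\alpha}b^{1-\alpha}\le a^{\alpha}b^{1-\alpha}+b$). Your remark that the label ``$B_{2}$'' in the statement should be read as a half-ball of some fixed radius depending only on $n,s$ is the right way to reconcile the literal inclusions with the paper's notation.
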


Now, we are ready to proof the part~\ref{itm:part-a:lem:small} of Lemma~\ref{lem:small} for the case $s\in(0,1/2)$. 
\begin{proof}
[Proof of the part~{\rm \ref{itm:part-a:lem:small}} of Lemma~{\rm \ref{lem:small}} for $s \in (0, \frac{1}{2})$]
The case $s\in(0,1/2)$ is similar as the case $s\in(1/2,1)$. As
above, the estimation for $u_{1}$ is a direct result of \eqref{eq:apriori}.
For $u_{2}$, we use Corollary~\ref{cor:inter-V} and the interpolation
inequality in Lemma~\ref{lem:inter2(b)}. With this estimation,
the analogues of \eqref{eq:small5} and \eqref{eq:small6} are followed by combining the estimates in splitting argument as above. Note
that \eqref{eq:small6} becomes 
\begin{align}
 & \|x_{n+1}^{\frac{1-2s}{2}}\tilde{u}\|_{L^{2}(B_{\tilde{c}}^{+})}+\|x_{n+1}^{\frac{1-2s}{2}}\nabla\tilde{u}\|_{L^{2}(B_{\tilde{c}}^{+})}\nonumber \\
\le & C\bigg(\|x_{n+1}^{\frac{1-2s}{2}}\tilde{u}\|_{L^{2}(B_{16}^{+})}+\|\tilde{u}\|_{L^{2}(B_{16}')}\bigg)^{\alpha} \bigg(\lim_{x_{n+1}\rightarrow0}\|x_{n+1}^{1-2s}\partial_{n+1}\tilde{u}\|_{L^{2}(B_{16}')}+\|u\|_{L^{2}(B_{16}')}\bigg)^{1-\alpha}\nonumber \\
 & +C\bigg(\|x_{n+1}^{\frac{1-2s}{2}}\tilde{u}\|_{L^{2}(B_{16}^{+})}+\|\tilde{u}\|_{L^{2}(B_{16}')}\bigg)^{\frac{2s}{1+s}} \bigg(\lim_{x_{n+1}\rightarrow0}\|x_{n+1}^{1-2s}\partial_{n+1}\tilde{u}\|_{L^{2}(B_{16}')}+\|u\|_{L^{2}(B_{16}')}\bigg)^{\frac{1-s}{1+s}}\nonumber \\
 & +\lim_{x_{n+1}\rightarrow0}\|x_{n+1}^{1-2s}\partial_{n+1}\tilde{u}\|_{L^{2}(B_{16}')}^{\frac{1}{2}}\|u\|_{L^{2}(B_{16}')}^{\frac{1}{2}},\label{eq:0small7}
\end{align}
which is our desired result. 
\end{proof}
Finally, combining \eqref{eq:0small7} and Lemma~\ref{lem:Linfty-L2}, we can immediately obtain the part~\ref{itm:part-b:lem:small} of Lemma~\ref{lem:small}. 

\section{\label{sec:Carleman}Carleman Estimate}

\subsection{A Carleman estimate with differentiability assumption}

Modifying the arguments in \cite{Reg97StrongUniquenessSecondOrderElliptic}, we can proof the following
Carleman estimate. 
\begin{thm}
\label{thm:Carl-diff} Let $s\in(0,1)$ and let $\tilde{u}\in H^{1}(\mathbb{R}_{+}^{n+1},x_{n+1}^{1-2s})$
with ${\rm supp}(\tilde{u})\subset\mathbb{R}_{+}^{n+1}\setminus B_{1}^{+}$
be a solution to 
\begin{align*}
\bigg[\partial_{n+1}x_{n+1}^{1-2s}\partial_{n+1}+x_{n+1}^{1-2s}\sum_{j,k=1}^{n}a_{jk}\partial_{j}\partial_{k}\bigg]\tilde{u} & =f\quad\text{in }\mathbb{R}_{+}^{n+1},\\
\lim_{x_{n+1}\rightarrow0}x_{n+1}^{1-2s}\partial_{n+1}\tilde{u} & =V\tilde{u}\quad\text{on }\mathbb{R}^{n}\times\{0\},
\end{align*}
where $x=(x',x_{n+1})\in\mathbb{R}^{n}\times\mathbb{R}_{+}$, $f\in L^{2}(\mathbb{R}_{+}^{n+1},x_{n+1}^{2s-1})$
with compact support in $\mathbb{R}_{+}^{n+1}$, and $V\in\mathcal{C}^{1}(\mathbb{R}^{n})$.
Assume that 
\[
\max_{1\le j,k\le n}\sup_{|x'|\ge1}|a_{jk}(x')-\delta_{jk}(x')|+\max_{1\le j,k\le n}\sup_{|x'|\ge1}|x'||\nabla'a_{jk}(x')|\le\epsilon
\]
for some sufficiently small $\epsilon>0$. Let further $\phi(x)=|x|^{\alpha}$
for $\alpha\ge1$. Then there exist constants $C=C(n,s,\alpha)$
and $\tau_{0}=\tau_{0}(n,s,\alpha)$ such that 
\begin{align*}
 & \tau^{3}\|e^{\tau\phi}|x|^{\frac{3\alpha}{2}-1}x_{n+1}^{\frac{1-2s}{2}}\tilde{u}\|_{L^{2}(\mathbb{R}_{+}^{n+1})}^{2}+\tau\|e^{\tau\phi}|x|^{\frac{\alpha}{2}}x_{n+1}^{\frac{1-2s}{2}}\nabla\tilde{u}\|_{L^{2}(\mathbb{R}_{+}^{n+1})}^{2}\\
 & +\tau^{-1}\|e^{\tau\phi}|x|^{-\frac{\alpha}{2}+1}x_{n+1}^{\frac{1-2s}{2}}\nabla(\nabla'\tilde{u})\|_{L^{2}(\mathbb{R}_{+}^{n+1})}^{2}\\
\le & C\bigg[\|e^{\tau\phi}x_{n+1}^{\frac{2s-1}{2}}|x|f\|_{L^{2}(\mathbb{R}_{+}^{n+1})}^{2}+\tau\|e^{\tau\phi}|x|^{\frac{\alpha}{2}}(|V|^{\frac{1}{2}}+|x'|^{\frac{1}{2}}|\nabla'V|^{\frac{1}{2}})\tilde{u}\|_{L^{2}(\mathbb{R}^{n}\times\{0\})}^{2}\\
 & \qquad+\tau^{-1}\|e^{\tau\phi}|x|^{-\frac{\alpha}{2}+1}(|V|^{\frac{1}{2}}+|x'|^{\frac{1}{2}}|\nabla'V|^{\frac{1}{2}})\nabla'\tilde{u}\|_{L^{2}(\mathbb{R}^{n}\times\{0\})}^{2}\bigg].
\end{align*}
for all $\tau\ge\tau_{0}$. Here, $\nabla'=(\partial_{1},\cdots,\partial_{n})$
and $\nabla=(\partial_{1},\cdots,\partial_{n},\partial_{n+1})$. 
\end{thm}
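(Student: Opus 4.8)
The plan is to run the positive-commutator (Carleman) scheme used for Lemma~\ref{lem:0Carl} (i.e. the one of \cite{RS20}), now with the global weight $\phi(x)=|x|^{\alpha}$ in place of the local weight, importing from \cite{Reg97} the algebraic device that lets one avoid second derivatives of $a_{jk}$ (only the bound on $|x'||\nabla'a_{jk}|$ in \eqref{eq:decay} being available). \textbf{Step 1 (Reduction and conjugation).} As in Step~1 of the proof of Lemma~\ref{lem:0Carl}, substitute $w=x_{n+1}^{\frac{1-2s}{2}}\tilde u$ to rewrite the degenerate equation as $\Delta w-\tfrac{(2s-1)(2s+1)}{4}x_{n+1}^{-2}w+\sum_{j,k}(a_{jk}-\delta_{jk})\partial_{j}\partial_{k}w=x_{n+1}^{\frac{2s-1}{2}}f$, the Robin condition becoming $\lim_{x_{n+1}\to0}x_{n+1}^{1-2s}\partial_{n+1}w=Vw$ up to the weight factor. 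Put $v=e^{\tau\phi}w$ and carry out the energy identities in the inner products adapted to $\phi=|x|^{\alpha}$; equivalently, passing to the logarithmic radial coordinate $t=\log|x|$ (the radial field $r\partial_{r}$) is what produces the polynomial factors $|x|^{\frac{3\alpha}{2}-1}$, $|x|^{\frac{\alpha}{2}}$, $|x|^{-\frac{\alpha}{2}+1}$ and the weight $|x|f$ on the right-hand side, exactly the mechanism of \cite{Reg97}. Decompose the conjugated operator as $L^{+}=S+A+(I)+(II)+(III)$ with $S$ self-adjoint, $A$ skew-adjoint, and $(I),(II),(III)$ the $(a_{jk}-\delta_{jk})$-contributions of orders $2,1,0$ in $\nabla$, as in Step~2 of Lemma~\ref{lem:0Carl}; then $\|L^{+}v\|^{2}=\|Sv\|^{2}+\|Av\|^{2}+2\langle Sv,Av\rangle+R$ with $2\langle Sv,Av\rangle=\langle[S,A]v,v\rangle$ plus boundary terms on $\mathbb{R}^{n}\times\{0\}$.

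\textbf{Step 2 (Bulk positivity).} Computing $[S,A]$ for $\phi=|x|^{\alpha}$: one finds $D^{2}\phi(\nabla\phi,\nabla\phi)=\alpha^{3}(\alpha-1)|x|^{3\alpha-4}\ge0$ for $\alpha\ge1$, so the commutator furnishes the leading good term $\gtrsim\tau^{3}\|e^{\tau\phi}|x|^{\frac{3\alpha}{2}-1}x_{n+1}^{\frac{1-2s}{2}}\tilde u\|^{2}$ --- this is where $\alpha\ge1$ enters --- together with the gradient term $\gtrsim\tau\|e^{\tau\phi}|x|^{\frac{\alpha}{2}}x_{n+1}^{\frac{1-2s}{2}}\nabla\tilde u\|^{2}$, recovered (as in Step~3.4 of Lemma~\ref{lem:0Carl}) by adding back a multiple of $\tau\langle Sv,v\rangle$; the negative Hardy-type contribution $\sim x_{n+1}^{-2}$ is absorbed using Lemma~\ref{lem:Hardy}, distinguishing $s\ge\tfrac12$ and $s<\tfrac12$ as there. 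The second-order term is extracted from $\|Sv\|^{2}$: expanding $\|\Delta v\|^{2}$ and using $\langle\Delta'v,\partial_{n+1}^{2}v\rangle=\|\nabla'\partial_{n+1}v\|^{2}-\langle\Delta'v,\partial_{n+1}v\rangle_{0}$ together with $\sum_{j,k}\|\partial_{j}\partial_{k}v\|^{2}=\|\Delta'v\|^{2}+\cdots$, which after dividing the whole estimate by $\tau$ yields $\tau^{-1}\|e^{\tau\phi}|x|^{-\frac{\alpha}{2}+1}x_{n+1}^{\frac{1-2s}{2}}\nabla(\nabla'\tilde u)\|^{2}$; this is precisely why the statement carries $\nabla(\nabla'\tilde u)$ and not $\nabla^{2}\tilde u$.

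\textbf{Step 3 (Variable-coefficient remainder --- the main obstacle).} The delicate point is estimating $R$, above all $\langle Av,(I)v\rangle$, since a naive integration by parts would place two derivatives on $a_{jk}$, which \eqref{eq:decay} does not allow. Following the cancellation isolated in Step~3.1.2 of Lemma~\ref{lem:0Carl} (and in \cite{Reg97}), I would pair $\langle Av,(I)v\rangle$ against $\langle Av,(III)v\rangle$ so that the dangerous pieces cancel; every surviving term then carries a factor $\nabla'a_{jk}$, and using $\sup_{|x'|\ge1}|x'||\nabla'a_{jk}|\le\epsilon$ --- the extra $|x'|$ compensating the unbounded weights $|x|^{\alpha-1}$ generated by $\nabla\phi$ --- these terms are dominated by $C\epsilon$ times the good bulk quantities from Step~2. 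Choosing $\epsilon$ small and then $\tau\ge\tau_{0}$ large absorbs them; tracking the exact powers of $|x|$ through every integration by parts is the technical heart of the proof.

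\textbf{Step 4 (Boundary terms and conclusion).} The boundary integrals on $\mathbb{R}^{n}\times\{0\}$ involve $\lim_{x_{n+1}\to0}x_{n+1}^{1-2s}\partial_{n+1}v$; invoking the Robin condition replaces this by $Vv$, and after integrating by parts in $x'$ along the boundary (transferring one tangential derivative, which generates the $\nabla'V$-contributions) all of them are bounded by $\tau\|e^{\tau\phi}|x|^{\frac{\alpha}{2}}(|V|^{\frac12}+|x'|^{\frac12}|\nabla'V|^{\frac12})\tilde u\|_{0}^{2}+\tau^{-1}\|e^{\tau\phi}|x|^{-\frac{\alpha}{2}+1}(|V|^{\frac12}+|x'|^{\frac12}|\nabla'V|^{\frac12})\nabla'\tilde u\|_{0}^{2}$. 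Assembling Steps~2--4, absorbing the lower-order terms for $\tau\ge\tau_{0}$, and undoing the substitution $v=e^{\tau\phi}x_{n+1}^{\frac{1-2s}{2}}\tilde u$ yields the claimed estimate.
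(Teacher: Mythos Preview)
Your overall architecture---conjugate, split $L^{+}=S+A+(\text{perturbations})$, extract $\tau^{3}$ from the commutator and $\tau^{-1}$ second-order terms from $\|Sv\|^{2}$, control the $(a_{jk}-\delta_{jk})$ remainders by $C\epsilon$, and read off the boundary terms via the Robin condition---matches the paper. But the execution differs in two substantive ways, and one of your steps is misdescribed.

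\textbf{Polar coordinates are the organizing device, not an aside.} The paper carries out the entire argument in conformal coordinates $x=e^{t}\omega$, $\omega\in\mathcal{S}_{+}^{n}$, so that $\phi=|x|^{\alpha}$ becomes $\varphi(t)=e^{\alpha t}$, a function of $t$ alone. The conjugated operator then splits with $S=\partial_{t}^{2}+\tilde{\Delta}_{\omega}+\tau^{2}|\varphi'|^{2}-\tau\varphi''-\tfrac{(n-2s)^{2}}{4}$ and $A=2\tau\varphi'\partial_{t}$, and the commutator computation is purely one-dimensional in $t$. In particular the leading $\tau^{3}$ term comes from $\|\varphi'|\varphi''|^{1/2}\bar v\|^{2}$, which is positive for every $\alpha\ge 1$; your Cartesian quantity $D^{2}\phi(\nabla\phi,\nabla\phi)=\alpha^{3}(\alpha-1)|x|^{3\alpha-4}$ vanishes at $\alpha=1$, so in your framework you would still need to recover the $\tau^{3}$ term from a different combination. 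The $x_{n+1}^{-2}$ singularity is not handled by the Hardy inequality (and there is no case split on $s$ here, contrary to what you suggest): it is absorbed into the spherical operator $\tilde{\Delta}_{\omega}$, whose ellipticity is established separately as a lemma inside the proof.

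\textbf{The remainder mechanism you cite is not the one at work.} The cancellation in Step~3.1.2 of Lemma~\ref{lem:0Carl} concerns the singularity of $\partial_{n+1}^{2}\phi\sim x_{n+1}^{-2s}$ for the \emph{local} weight, and is precisely why Lemma~\ref{lem:0Carl} needs a bound on $\nabla^{2}a_{jk}$ when $s\neq\tfrac12$. That issue is absent for $\phi=|x|^{\alpha}$. In the paper the avoidance of $\nabla^{2}a_{jk}$ is instead a consequence of the polar change of variables: since $\partial_{t}=r\partial_{r}$ and $\Omega_{j}$ are tangential, every integration by parts in $(t,\omega)$ throws a derivative onto $a_{jk}$ that is either $\partial_{t}a_{jk}$ or $\nabla'_{\omega}a_{jk}$, and the hypothesis $|x'||\nabla'a_{jk}|\le\epsilon$ says exactly that both are $\le\epsilon$. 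No second derivative of $a_{jk}$ ever appears, and no pairing of $\langle Av,(I)v\rangle$ against $\langle Av,(III)v\rangle$ is needed---note $(III)$ is a zeroth-order multiplication, so that cross term has no dangerous second-order piece to cancel in the first place. If you want to stay in Cartesian coordinates you will have to reproduce this effect by weighting all integrations by parts with powers of $|x|$, which is possible but amounts to redoing the polar computation by hand.
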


\begin{proof}
[Proof of Theorem {\rm \ref{thm:Carl-diff}}] \textbf{Step 1: Changing the coordinates.} Write $x=e^{t}\omega$ with
$t\in\mathbb{R}$ and $\omega\in\mathcal{S}_{+}^{n}$, we have 
\[
\partial_{j}=e^{-t}(\omega_{j}\partial_{t}+\Omega_{j})\quad\text{for all }j=1,\cdots,n+1.
\]
Since 
\begin{equation}
\Omega_{k}\omega_{j}=\delta_{jk}-\omega_{k}\omega_{j},\label{eq:Carl0}
\end{equation}
so 
\[
\partial_{j}\partial_{k}=e^{-2t}(\omega_{j}\omega_{k}\partial_{t}^{2}+\omega_{j}\Omega_{k}\partial_{t}+\omega_{k}\Omega_{j}\partial_{t}+(\delta_{jk}-2\omega_{j}\omega_{k})\partial_{t}+\Omega_{j}\Omega_{k}-\omega_{j}\Omega_{k}).
\]
Since $\partial_{j}$ and $\partial_{k}$ commute, then 
\[
\Omega_{j}\Omega_{k}-\omega_{j}\Omega_{k}=\Omega_{k}\Omega_{j}-\omega_{k}\Omega_{j},
\]
that is, $\Omega_{j}$ and $\Omega_{k}$ commute up to some lower
order terms. Write $\partial_{j}\partial_{k}=\frac{1}{2}(\partial_{j}\partial_{k}+\partial_{k}\partial_{j})$,
we reach 
\begin{align*}
\partial_{j}\partial_{k}= & e^{-2t}\bigg(\omega_{j}\omega_{k}\partial_{t}^{2}+\omega_{j}\Omega_{k}\partial_{t}+\omega_{k}\Omega_{j}\partial_{t}+(\delta_{jk}-2\omega_{j}\omega_{k})\partial_{t}\\
 & \qquad+\frac{1}{2}\Omega_{j}\Omega_{k}+\frac{1}{2}\Omega_{k}\Omega_{j}-\frac{1}{2}\omega_{j}\Omega_{k}-\frac{1}{2}\omega_{k}\Omega_{j}\bigg).
\end{align*}
Also, the vector fields have the following properties 
\begin{align*}
\sum_{j=1}^{n+1}\omega_{j}\Omega_{j} & =0\quad\text{and}\quad\sum_{j=1}^{n+1}\Omega_{j}\omega_{j}=n\quad\text{in }\mathcal{S}_{+}^{n},\\
\sum_{j=1}^{n}\omega_{j}\Omega_{j} & =0\quad\text{and}\quad\sum_{j=1}^{n}\Omega_{j}\omega_{j}=n\quad\text{on }\partial\mathcal{S}_{+}^{n}.
\end{align*}
Using this coordinate, 
\begin{align*}
f= & e^{-(1+2s)t}\bigg[\omega_{n+1}^{1-2s}\partial_{t}^{2}+\omega_{n+1}^{1-2s}(n-2s)\partial_{t}+\sum_{j=1}^{n+1}\Omega_{j}\omega_{n+1}^{1-2s}\Omega_{j}\bigg]\tilde{u}\\
 & +e^{-(1+2s)t}\omega_{n+1}^{1-2s}\sum_{j,k=1}^{n}(a_{jk}-\delta_{jk})\bigg[\omega_{j}\omega_{k}\partial_{t}^{2}+\omega_{j}\Omega_{k}\partial_{t}+\omega_{k}\Omega_{j}\partial_{t}+\frac{1}{2}\Omega_{j}\Omega_{k}+\frac{1}{2}\Omega_{k}\Omega_{j}\bigg]\tilde{u}\\
 & +e^{-(1+2s)t}\omega_{n+1}^{1-2s}\sum_{j,k=1}^{n}(a_{jk}-\delta_{jk})\bigg[(\delta_{jk}-2\omega_{j}\omega_{k})\partial_{t}-\frac{1}{2}\omega_{j}\Omega_{k}-\frac{1}{2}\omega_{k}\Omega_{j}\bigg]\tilde{u}\quad\text{in }\mathcal{S}_{+}^{n}\times\mathbb{R}.
\end{align*}
Next, let $\overline{u}=e^{\frac{n-2s}{2}t}\tilde{u}$ and $\tilde{f}=e^{\frac{n-2s}{2}t}e^{(1+2s)t}f=e^{\frac{n+2+2s}{2}t}f$,
\begin{align}
\tilde{f}= & \bigg[\omega_{n+1}^{1-2s}\partial_{t}^{2}+\sum_{j=1}^{n+1}\Omega_{j}\omega_{n+1}^{1-2s}\Omega_{j}-\omega_{n+1}^{1-2s}\frac{(n-2s)^{2}}{4}\bigg]\overline{u}\nonumber \\
 & +\omega_{n+1}^{1-2s}\sum_{j,k=1}^{n}(a_{jk}-\delta_{jk})\bigg[\omega_{j}\omega_{k}\partial_{t}^{2}+\omega_{j}\Omega_{k}\partial_{t}+\omega_{k}\Omega_{j}\partial_{t}+\frac{1}{2}\Omega_{j}\Omega_{k}+\frac{1}{2}\Omega_{k}\Omega_{j}\bigg]\overline{u}\nonumber \\
 & +\omega_{n+1}^{1-2s}\sum_{j,k=1}^{n}(a_{jk}-\delta_{jk})\bigg[(\delta_{jk}-(n+2-2s)\omega_{j}\omega_{k})\partial_{t}\nonumber \\
 & \qquad-\frac{n+1-2s}{2}\omega_{j}\Omega_{k}-\frac{n+1-2s}{2}\omega_{k}\Omega_{j}\bigg]\overline{u}\nonumber \\
 & +\omega_{n+1}^{1-2s}\sum_{j,k=1}^{n}(a_{jk}-\delta_{jk})\bigg[\frac{(n-2s)^{2}}{4}\omega_{j}\omega_{k}-\frac{n-2s}{2}(\delta_{jk}-2\omega_{j}\omega_{k})\bigg]\overline{u}\quad\text{in }\mathcal{S}_{+}^{n}\times\mathbb{R}.\label{eq:Carl1}
\end{align}
Also, 
\[
\lim_{\omega_{n+1}\rightarrow0}\omega_{n+1}^{1-2s}\Omega_{n+1}\overline{u}=\tilde{V}\overline{u},
\]
where $\tilde{V}=e^{2st}V$. 

\textbf{Step 2: Conjugation.} Next, setting $\overline{v}=\omega_{n+1}^{\frac{1-2s}{2}}e^{\tau\varphi}\overline{u}$,
where $\varphi(t)=\phi(e^{t}\omega)=e^{\alpha t}$, we reach 
\begin{equation}
\omega_{n+1}^{\frac{2s-1}{2}}e^{\tau\varphi}\tilde{f}=L^{+}\overline{v}=(S-A+(I)+(II)+(III))\overline{v}\quad\text{in }\mathcal{S}_{+}^{n}\times\mathbb{R},\label{eq:Carl2}
\end{equation}
where 
\begin{align*}
S & =\partial_{t}^{2}+\tilde{\Delta}_{\omega}+\tau^{2}|\varphi'|^{2}-\tau\varphi''-\frac{(n-2s)^{2}}{4},\quad\tilde{\Delta}_{\omega}=\sum_{j=1}^{n+1}\omega_{n+1}^{\frac{2s-1}{2}}\Omega_{j}\omega_{n+1}^{1-2s}\Omega_{j}\omega_{n+1}^{\frac{2s-1}{2}}\\
A & =2\tau\varphi'\partial_{t}\\
(I) & =\sum_{j,k=1}^{n}(a_{jk}-\delta_{jk})\bigg[\omega_{j}\omega_{k}\partial_{t}^{2}+\omega_{j}\Omega_{k}\partial_{t}+\omega_{k}\Omega_{j}\partial_{t}+\frac{1}{2}\Omega_{j}\Omega_{k}+\frac{1}{2}\Omega_{k}\Omega_{j}\bigg]\\
(II) & =\sum_{j,k=1}^{n}(a_{jk}-\delta_{jk})\bigg[(-2\tau\varphi'\omega_{j}\omega_{k}+(\delta_{jk}-(n+1)\omega_{j}\omega_{k}))\partial_{t}-\bigg(\tau\varphi'+\frac{n}{2}\bigg)(\omega_{j}\Omega_{k}+\omega_{k}\Omega_{j})\bigg]\\
(III) & =\sum_{j,k=1}^{n}(a_{jk}-\delta_{jk})\bigg[\omega_{j}\omega_{k}(\tau^{2}|\varphi'|^{2}-\tau\varphi''+(n+1)\tau\varphi'+C_{1})+C_{2}\bigg],
\end{align*}
for some constants $C_{1}$ and $C_{2}$. Also, 
\begin{equation}
\lim_{\omega_{n+1}\rightarrow0}\omega_{n+1}^{1-2s}\Omega_{n+1}\omega_{n+1}^{\frac{2s-1}{2}}\overline{v}=\tilde{V}\omega_{n+1}^{\frac{2s-1}{2}}\overline{v}\quad\text{on }\partial\mathcal{S}_{+}^{n}\times\mathbb{R}.\label{eq:Carl3}
\end{equation}
We denote the norm and the scalar product in the bulk and the boundary
space by 
\begin{align*}
\|\bullet\| :=\|\bullet\|_{L^{2}(\mathcal{S}_{+}^{n}\times\mathbb{R})}, & \qquad \|\bullet\|_{0} :=\|\bullet\|_{L^{2}(\partial\mathcal{S}_{+}^{n}\times\mathbb{R})}\\
\langle\bullet,\bullet\rangle :=\langle\bullet,\bullet\rangle_{L^{2}(\mathcal{S}_{+}^{n}\times\mathbb{R})}, & \qquad 
\langle\bullet,\bullet\rangle_{0} :=\langle\bullet,\bullet\rangle_{L^{2}(\partial\mathcal{S}_{+}^{n}\times\mathbb{R})}
\end{align*}
and we omit the notation ``$\lim_{\omega_{n+1}\rightarrow0}$''
in $\|\bullet\|_{0}$ and $\langle\bullet,\bullet\rangle_{0}$. 

\textbf{Step 3: Showing the ellipticity of $\tilde{\Delta}_{\omega}$.} We need to prove the ellipticity of $\tilde{\Delta}_{\omega}$: 
\begin{lem}
\label{lem:ellip} Suppose \eqref{eq:Carl3} holds, then 
\begin{align*}
\|\tilde{\Delta}_{\omega}\overline{v}\|^{2}\ge & c_{0}\sum_{(j,k)\neq(n+1,n+1)}\|\omega_{n+1}^{\frac{1-2s}{2}}\Omega_{j}\Omega_{k}\omega_{n+1}^{\frac{2s-1}{2}}\overline{v}\|^{2}\\
 & -C\bigg(\sum_{j=1}^{n+1}\|\omega_{n+1}^{\frac{1-2s}{2}}\Omega_{j}\omega_{n+1}^{\frac{2s-1}{2}}\overline{v}\|^{2}+\|\overline{v}\|^{2}+\|(|\tilde{V}|^{\frac{1}{2}}+|\nabla_{\omega}'\tilde{V}|^{\frac{1}{2}})\nabla_{\omega}'\omega_{n+1}^{\frac{2s-1}{2}}\overline{v}\|_{0}^{2}\\
 & \qquad+\||\nabla_{\omega}'\tilde{V}|^{\frac{1}{2}}\omega_{n+1}^{\frac{2s-1}{2}}\overline{v}\|_{0}^{2}\bigg).
\end{align*}
\end{lem}

\begin{proof}
Note that 
\begin{align*}
\|\tilde{\Delta}_{\omega}\overline{v}\|^{2}= & \bigg\|\sum_{j=1}^{n}\omega_{n+1}^{\frac{2s-1}{2}}\Omega_{j}\omega_{n+1}^{1-2s}\Omega_{j}\omega_{n+1}^{\frac{2s-1}{2}}\overline{v}+\omega_{n+1}^{\frac{2s-1}{2}}\Omega_{n+1}\omega_{n+1}^{1-2s}\Omega_{n+1}\omega_{n+1}^{\frac{2s-1}{2}}\overline{v}\bigg\|^{2}\\
\ge & \bigg\|\sum_{j=1}^{n}\omega_{n+1}^{\frac{2s-1}{2}}\Omega_{j}\omega_{n+1}^{1-2s}\Omega_{j}\omega_{n+1}^{\frac{2s-1}{2}}\overline{v}\bigg\|^{2}\\
 & +2\sum_{j=1}^{n}\langle\omega_{n+1}^{\frac{2s-1}{2}}\Omega_{j}\omega_{n+1}^{1-2s}\Omega_{j}\omega_{n+1}^{\frac{2s-1}{2}}\overline{v},\omega_{n+1}^{\frac{2s-1}{2}}\Omega_{n+1}\omega_{n+1}^{1-2s}\Omega_{n+1}\omega_{n+1}^{\frac{2s-1}{2}}\overline{v}\rangle.
\end{align*}
The integration by parts is given by 
\begin{align*}
 & \int_{\mathbb{R}_{+}^{n+1}}(\Omega_{n+1}v)u\,dx+\int_{\mathbb{R}_{+}^{n+1}}v(\Omega_{n+1}u)\,dx=\int_{\mathbb{R}_{+}^{n+1}}\Omega_{n+1}(uv)\,dx\\
= & \int_{\mathbb{R}_{+}^{n+1}}|x|\partial_{n+1}(uv)\,dx-\int_{\mathcal{S}_{+}^{n}}\int_{0}^{\infty}r\omega_{n+1}\partial_{t}(uv)r^{n}\,dr\,d\omega\\
= & -\int_{\mathbb{R}^{n}\times\{0\}}|x'|uv\,dx'-\int_{\mathbb{R}_{+}^{n+1}}\omega_{n+1}uv\,dx+(n+1)\int_{\mathcal{S}_{+}^{n}}\int_{0}^{\infty}\omega_{n+1}(uv)r^{n}\,dr\,d\omega\\
= & -\int_{\mathbb{R}^{n}\times\{0\}}|x'|uv\,dx'+n\int_{\mathbb{R}_{+}^{n+1}}\omega_{n+1}uv\,dx.
\end{align*}
Similar integration by parts formula holds for $\Omega_{j}$ for $j=1,\cdots,n$. 

Indeed, by \eqref{eq:Carl0}, we know that for $j=1,\cdots,n$, $\Omega_{j}$
and $\omega_{n+1}$ are commute up to some lower order term. So, to
estimate the first term, it is suffice to estimate $\|\sum_{j=1}^{n}\Omega_{j}^{2}\overline{v}\|^{2}$.
Finally, the lower order terms can be easily estimated using integration by parts, 
\end{proof}
Defining $L^{-}:=S+A+(I)-(II)+(III)$, 
\[
\mathscr{D} := \|L^{+}\overline{v}\|^{2}-\|L^{-}\overline{v}\|^{2} \quad \text{and} \quad \mathscr{S} :=  \||\varphi'|^{-\frac{1}{2}}L^{+}\overline{v}\|^{2}+\||\varphi'|^{-\frac{1}{2}}L^{-}\overline{v}\|^{2}.
\] 

\textbf{Step 4: Estimating the difference $\mathscr{D}$.} Observe that $\mathscr{D}=-4\langle S\overline{v},A\overline{v}\rangle+R$, where 
\[
R=4\langle S\overline{v},(II)\overline{v}\rangle-4\langle A\overline{v},(I)\overline{v}\rangle-4\langle A\overline{v},(III)\overline{v}\rangle+4\langle(I)\overline{v},(II)\overline{v}\rangle+4\langle(II)\overline{v},(III)\overline{v}\rangle.
\]
By using \eqref{eq:Carl0} and integration by parts, we can compute 
\begin{align*}
-4\langle S\overline{v},A\overline{v}\rangle\ge & 4\tau\||\varphi''|^{\frac{1}{2}}\partial_{t}\overline{v}\|^{2}-4\tau\sum_{j=1}^{n+1}\||\varphi''|^{\frac{1}{2}}\omega_{n+1}^{\frac{1-2s}{2}}\Omega_{j}\omega_{n+1}^{\frac{2s-1}{2}}\overline{v}\|^{2}+\frac{119}{10}\tau^{3}\|\varphi'|\varphi''|^{\frac{1}{2}}\overline{v}\|^{2}\\
 & -2\tau\|(|\tilde{V}|^{\frac{1}{2}}+|\partial_{t}\tilde{V}|^{\frac{1}{2}})|\varphi''|^{\frac{1}{2}}\omega_{n+1}^{\frac{2s-1}{2}}\overline{v}\|_{0}^{2}.
\end{align*}
Since 
\[
\max_{1\le j,k\le n}|a_{jk}-\delta_{jk}|+\max_{1\le j,k\le n}|\partial_{t}a_{jk}|+\max_{1\le j,k\le n}|\nabla_{\omega}'a_{jk}|\le\epsilon,
\]
by using integration by parts, again we reach 
\begin{align*}
R\ge & -\tau\epsilon C\||\varphi'|^{\frac{1}{2}}\partial_{t}\overline{v}\|^{2}-\tau\epsilon C\sum_{j=1}^{n+1}\||\varphi''|^{\frac{1}{2}}\omega_{n+1}^{\frac{1-2s}{2}}\Omega_{j}\omega_{n+1}^{\frac{2s-1}{2}}\overline{v}\|^{2}-\tau^{3}\epsilon C\|\varphi'|\varphi''|^{\frac{1}{2}}\overline{v}\|^{2}\\
 & -\tau\epsilon C\|(|\tilde{V}|^{\frac{1}{2}}+|\partial_{t}\tilde{V}|^{\frac{1}{2}}+|\nabla_{\omega}'\tilde{V}|^{\frac{1}{2}})|\varphi''|^{\frac{1}{2}}\omega_{n+1}^{\frac{2s-1}{2}}\overline{v}\|_{0}^{2}.
\end{align*}
Hence, for small $\epsilon>0$ and large $\tau_{0}$, we reach 
\begin{align}
\mathscr{D}\ge & \frac{39}{10}\tau\||\varphi''|^{\frac{1}{2}}\partial_{t}\overline{v}\|^{2}-\frac{41}{10}\tau\sum_{j=1}^{n+1}\||\varphi''|^{\frac{1}{2}}\omega_{n+1}^{\frac{1-2s}{2}}\Omega_{j}\omega_{n+1}^{\frac{2s-1}{2}}\overline{v}\|^{2}+\frac{118}{10}\tau^{3}\|\varphi'|\varphi''|^{\frac{1}{2}}\overline{v}\|^{2}\nonumber \\
 & -C\tau\|(|\tilde{V}|^{\frac{1}{2}}+|\partial_{t}\tilde{V}|^{\frac{1}{2}}+|\nabla_{\omega}'\tilde{V}|^{\frac{1}{2}})|\varphi''|^{\frac{1}{2}}\omega_{n+1}^{\frac{2s-1}{2}}\overline{v}\|_{0}^{2}.\label{eq:Carl-difference}
\end{align}

\textbf{Step 5: Estimating the sum $\mathscr{S}$.} Note that  
\begin{align*}
\mathscr{S}\ge & 2\||\varphi'|^{-\frac{1}{2}}S\overline{v}\|^{2}+2\||\varphi'|^{-\frac{1}{2}}A\overline{v}\|^{2}\\
 & -C\epsilon\||\varphi'|^{-\frac{1}{2}}\partial_{t}^{2}\overline{v}\|^{2}-C\epsilon\sum_{j=1}^{n}\||\varphi'|^{-\frac{1}{2}}\partial_{t}\omega_{n+1}^{\frac{1-2s}{2}}\Omega_{j}\omega_{n+1}^{\frac{2s-1}{2}}\overline{v}\|^{2}-C\epsilon\sum_{j,k=1}^{n}\||\varphi'|^{-\frac{1}{2}}\Omega_{j}\Omega_{k}\overline{v}\|^{2}\\
 & -C\epsilon\tau^{2}\||\varphi'|^{\frac{1}{2}}\partial_{t}\overline{v}\|^{2}-C\epsilon\tau^{2}\sum_{j=1}^{n}\||\varphi'|^{\frac{1}{2}}\Omega_{j}\overline{v}\|^{2}-C\epsilon\tau^{4}\||\varphi'|^{\frac{3}{2}}\overline{v}\|^{2}.
\end{align*}
Observe that 
\[
2\||\varphi'|^{-\frac{1}{2}}S\overline{v}\|^{2}\ge\frac{19}{10}\||\varphi'|^{-\frac{1}{2}}\partial_{t}^{2}\overline{v}+|\varphi'|^{-\frac{1}{2}}\tilde{\Delta}_{\omega}\overline{v}+\tau^{2}|\varphi'|^{\frac{3}{2}}\overline{v}\|^{2}-C\tau^{2}\||\varphi''|^{\frac{1}{2}}\overline{v}\|^{2}.
\]
For $\delta\in(0,1)$, write 
\begin{align*}
 & \||\varphi'|^{-\frac{1}{2}}\partial_{t}^{2}\overline{v}+|\varphi'|^{-\frac{1}{2}}\tilde{\Delta}_{\omega}\overline{v}+\tau^{2}|\varphi'|^{\frac{3}{2}}\overline{v}\|^{2}\\
= & \||\varphi'|^{-\frac{1}{2}}\partial_{t}^{2}\overline{v}\|^{2}+(1-\delta)\||\varphi'|^{-\frac{1}{2}}\tilde{\Delta}_{\omega}\overline{v}\|^{2}+\delta\||\varphi'|^{-\frac{1}{2}}\tilde{\Delta}_{\omega}\overline{v}\|^{2}+\tau^{4}\||\varphi'|^{\frac{3}{2}}\overline{v}\|^{2}\\
 & +\langle|\varphi'|^{-1}\partial_{t}^{2}\overline{v},\tilde{\Delta}_{\omega}\overline{v}\rangle+\tau^{2}\langle\varphi'\partial_{t}^{2}\overline{v},\overline{v}\rangle+\tau^{2}\langle\varphi'\tilde{\Delta}_{\omega}\overline{v},\overline{v}\rangle.
\end{align*}
Hence, by using integration by parts, and apply Lemma \ref{lem:ellip}
on the term $\delta\||\varphi'|^{-\frac{1}{2}}\tilde{\Delta}_{\omega}\overline{v}\|^{2}$,
choose $\delta>0$ small, and then choose $\epsilon>0$ small, we
reach 
\begin{align}
\mathscr{S}\ge & \frac{19}{10}\||\varphi'|^{-\frac{1}{2}}\partial_{t}^{2}\overline{v}\|^{2}+\frac{19}{10}\sum_{j=1}^{n+1}\||\varphi'|^{-\frac{1}{2}}\omega_{n+1}^{\frac{1-2s}{2}}\Omega_{j}\omega_{n+1}^{\frac{2s-1}{2}}\partial_{t}\overline{v}\|^{2}\nonumber \\
 & +c_{1}\sum_{(j,k)\neq(n+1,n+1)}\||\varphi'|^{-\frac{1}{2}}\omega_{n+1}^{\frac{1-2s}{2}}\Omega_{j}\Omega_{k}\omega_{n+1}^{\frac{2s-1}{2}}\overline{v}\|^{2}+\frac{18}{10}\||\varphi'|^{-\frac{1}{2}}\tilde{\Delta}_{\omega}\overline{v}\|^{2}\nonumber \\
 & +\frac{9}{10}\tau^{4}\||\varphi'|^{\frac{3}{2}}\overline{v}\|^{2}+\frac{39}{10}\tau^{2}\||\varphi'|^{\frac{1}{2}}\partial_{t}\overline{v}\|^{2}-\frac{11}{10}\tau^{2}\sum_{j=1}^{n+1}\||\varphi'|^{\frac{1}{2}}\omega_{n+1}^{\frac{1-2s}{2}}\Omega_{j}\omega_{n+1}^{\frac{2s-1}{2}}\overline{v}\|^{2}\nonumber \\
 & -C\|(|\tilde{V}|^{\frac{1}{2}}+|\partial_{t}\tilde{V}|^{\frac{1}{2}}+|\nabla_{\omega}'\tilde{V}|^{\frac{1}{2}})|\varphi'|^{-\frac{1}{2}}\partial_{t}\omega_{n+1}^{\frac{2s-1}{2}}\overline{v}\|_{0}^{2}\nonumber \\
 & -C\|(|\tilde{V}|^{\frac{1}{2}}+|\partial_{t}\tilde{V}|^{\frac{1}{2}}+|\nabla_{\omega}'\tilde{V}|^{\frac{1}{2}})|\varphi'|^{-\frac{1}{2}}\nabla_{\omega}'\omega_{n+1}^{\frac{2s-1}{2}}\overline{v}\|_{0}^{2}\nonumber \\
 & -C\|(|\tilde{V}|^{\frac{1}{2}}+|\partial_{t}\tilde{V}|^{\frac{1}{2}}+|\nabla_{\omega}'\tilde{V}|^{\frac{1}{2}})|\varphi'|^{\frac{1}{2}}\omega_{n+1}^{\frac{2s-1}{2}}\overline{v}\|_{0}^{2}.\label{eq:Carl-sum}
\end{align}

\textbf{Step 6: Combining the difference $\mathscr{D}$ and the sum $\mathscr{S}$.} Multiplying \eqref{eq:Carl-difference} by $\tau$, and summing with \eqref{eq:Carl-sum},
we reach 
\begin{align}
 & (\tau+1)\|L^{+}\overline{v}\|^{2}\ge\tau\mathscr{D}+\mathscr{S}\nonumber \\
\ge & c_{1}\bigg(\||\varphi'|^{-\frac{1}{2}}\partial_{t}^{2}\overline{v}\|^{2}+\sum_{j=1}^{n+1}\||\varphi'|^{-\frac{1}{2}}\omega_{n+1}^{\frac{1-2s}{2}}\Omega_{j}\omega_{n+1}^{\frac{2s-1}{2}}\partial_{t}\overline{v}\|^{2}\nonumber \\
 & \qquad+\sum_{(j,k)\neq(n+1,n+1)}\||\varphi'|^{-\frac{1}{2}}\omega_{n+1}^{\frac{1-2s}{2}}\Omega_{j}\Omega_{k}\omega_{n+1}^{\frac{2s-1}{2}}\overline{v}\|^{2}\bigg)\nonumber \\
 & +\frac{39}{5}\tau^{2}\||\varphi'|^{\frac{1}{2}}\partial_{t}\overline{v}\|^{2}+\frac{208}{10}\tau^{4}\|\varphi'|\varphi''|^{\frac{1}{2}}\overline{v}\|^{2}-\frac{11}{10}\tau^{2}\sum_{j=1}^{n+1}\||\varphi'|^{\frac{1}{2}}\omega_{n+1}^{\frac{1-2s}{2}}\Omega_{j}\omega_{n+1}^{\frac{2s-1}{2}}\overline{v}\|^{2}\nonumber \\
 & +\frac{18}{10}\||\varphi'|^{-\frac{1}{2}}\tilde{\Delta}_{\omega}\overline{v}\|^{2}\nonumber \\
 & -C\|(|\tilde{V}|^{\frac{1}{2}}+|\partial_{t}\tilde{V}|^{\frac{1}{2}}+|\nabla_{\omega}'\tilde{V}|^{\frac{1}{2}})|\varphi'|^{-\frac{1}{2}}\partial_{t}\omega_{n+1}^{\frac{2s-1}{2}}\overline{v}\|_{0}^{2}\nonumber \\
 & -C\|(|\tilde{V}|^{\frac{1}{2}}+|\partial_{t}\tilde{V}|^{\frac{1}{2}}+|\nabla_{\omega}'\tilde{V}|^{\frac{1}{2}})|\varphi'|^{-\frac{1}{2}}\nabla_{\omega}'\omega_{n+1}^{\frac{2s-1}{2}}\overline{v}\|_{0}^{2}\nonumber \\
 & -C\tau^{2}\|(|\tilde{V}|^{\frac{1}{2}}+|\partial_{t}\tilde{V}|^{\frac{1}{2}}+|\nabla_{\omega}'\tilde{V}|^{\frac{1}{2}})|\varphi''|^{\frac{1}{2}}\omega_{n+1}^{\frac{2s-1}{2}}\overline{v}\|_{0}^{2}.\label{eq:Carl4}
\end{align}

\textbf{Step 7: Obtaining gradient estimates.} Note that 
\begin{align}
 & \frac{12}{10}\tau^{2}\sum_{j=1}^{n+1}\||\varphi'|^{\frac{1}{2}}\omega_{n+1}^{\frac{1-2s}{2}}\Omega_{j}\omega_{n+1}^{\frac{2s-1}{2}}\overline{v}\|^{2}+\frac{12}{10}\tau^{2}\langle\tilde{V}\omega_{n+1}^{\frac{2s-1}{2}}\overline{v},\varphi'\omega_{n+1}^{\frac{2s-1}{2}}\overline{v}\rangle_{0}\nonumber \\
= & -\frac{12}{10}\tau^{2}\langle\varphi'\overline{v},\tilde{\Delta}_{\omega}\overline{v}\rangle\le\frac{16}{10}\||\varphi'|^{-\frac{1}{2}}\tilde{\Delta}_{\omega}\overline{v}\|^{2}+\frac{144}{100}\tau^{4}\||\varphi'|^{\frac{3}{2}}\overline{v}\|^{2}.\label{eq:Carl5}
\end{align}

\textbf{Step 8: Conclusion.} Summing up \eqref{eq:Carl4} and \eqref{eq:Carl5}, we reach 
\begin{align}
 & \||\varphi'|^{-\frac{1}{2}}\partial_{t}^{2}\overline{v}\|^{2}+\sum_{j=1}^{n+1}\||\varphi'|^{-\frac{1}{2}}\omega_{n+1}^{\frac{1-2s}{2}}\Omega_{j}\omega_{n+1}^{\frac{2s-1}{2}}\partial_{t}\overline{v}\|^{2}\nonumber \\
 & +\sum_{(j,k)\neq(n+1,n+1)}\||\varphi'|^{-\frac{1}{2}}\omega_{n+1}^{\frac{1-2s}{2}}\Omega_{j}\Omega_{k}\omega_{n+1}^{\frac{2s-1}{2}}\overline{v}\|^{2}\nonumber \\
 & +\tau^{2}\||\varphi'|^{\frac{1}{2}}\partial_{t}\overline{v}\|^{2}+\tau^{2}\sum_{j=1}^{n+1}\||\varphi'|^{\frac{1}{2}}\omega_{n+1}^{\frac{1-2s}{2}}\Omega_{j}\omega_{n+1}^{\frac{2s-1}{2}}\overline{v}\|^{2}+\tau^{4}\|\varphi'|\varphi''|^{\frac{1}{2}}\overline{v}\|^{2}\nonumber \\
\le & C\tau\|\tilde{f}\|^{2}+C\|(|\tilde{V}|^{\frac{1}{2}}+|\partial_{t}\tilde{V}|^{\frac{1}{2}}+|\nabla_{\omega}'\tilde{V}|^{\frac{1}{2}})|\varphi'|^{-\frac{1}{2}}\partial_{t}\omega_{n+1}^{\frac{2s-1}{2}}\overline{v}\|_{0}^{2}\nonumber \\
 & +C\|(|\tilde{V}|^{\frac{1}{2}}+|\partial_{t}\tilde{V}|^{\frac{1}{2}}+|\nabla_{\omega}'\tilde{V}|^{\frac{1}{2}})|\varphi'|^{-\frac{1}{2}}\nabla_{\omega}'\omega_{n+1}^{\frac{2s-1}{2}}\overline{v}\|_{0}^{2}\nonumber \\
 & +C\tau^{2}\|(|\tilde{V}|^{\frac{1}{2}}+|\partial_{t}\tilde{V}|^{\frac{1}{2}}+|\nabla_{\omega}'\tilde{V}|^{\frac{1}{2}})|\varphi''|^{\frac{1}{2}}\omega_{n+1}^{\frac{2s-1}{2}}\overline{v}\|_{0}^{2}.\label{eq:Carl6}
\end{align}
Changing back to the Cartesian coordinate, and we obtain our result. 
\end{proof}

\subsection{A Carleman estimate without differentiabiliy assumptions}

Imitating the splitting arguments in \cite[Theorem~5]{RW19Landis}, we can prove the following Carleman estimate. 
\begin{thm}
\label{thm:Carl-ndiff} Let $s\in(0,1)$ and let $\tilde{u}\in H^{1}(\mathbb{R}_{+}^{n+1},x_{n+1}^{1-2s})$
with ${\rm supp}(\tilde{u})\subset\mathbb{R}_{+}^{n+1}\setminus B_{1}^{+}$
be a solution to 
\begin{align*}
\bigg[\partial_{n+1}x_{n+1}^{1-2s}\partial_{n+1}+x_{n+1}^{1-2s}\sum_{j,k=1}^{n}a_{jk}\partial_{j}\partial_{k}\bigg]\tilde{u} & =f\quad\text{in }\mathbb{R}_{+}^{n+1},\\
\lim_{x_{n+1}\rightarrow0}x_{n+1}^{1-2s}\partial_{n+1}\tilde{u} & =V\tilde{u}\quad\text{on }\mathbb{R}^{n}\times\{0\},
\end{align*}
where $x=(x',x_{n+1})\in\mathbb{R}^{n}\times\mathbb{R}_{+}$, $f\in L^{2}(\mathbb{R}_{+}^{n+1},x_{n+1}^{2s-1})$
with compact support in $\mathbb{R}_{+}^{n+1}$, and $V\in L^{\infty}(\mathbb{R}^{n})$.
Assume that 
\[
\max_{1\le j,k\le n}\sup_{|x'|\ge1}|a_{jk}(x')-\delta_{jk}(x')|+\max_{1\le j,k\le n}\sup_{|x'|\ge1}|x'||\nabla'a_{jk}(x')|\le\epsilon
\]
for some sufficiently small $\epsilon>0$. Let further $\phi(x)=|x|^{\alpha}$
for $\alpha\ge1$. Then there exist constants $C=C(n,s,\alpha)$
and $\tau_{0}=\tau_{0}(n,s,\alpha)$ such that 
\begin{align*}
 & \tau^{3}\|e^{\tau\phi}|x|^{\frac{3\alpha}{2}-1}x_{n+1}^{\frac{1-2s}{2}}\tilde{u}\|_{L^{2}(\mathbb{R}_{+}^{n+1})}^{2}+\tau\|e^{\tau\phi}|x|^{\frac{\alpha}{2}}x_{n+1}^{\frac{1-2s}{2}}\nabla\tilde{u}\|_{L^{2}(\mathbb{R}_{+}^{n+1})}^{2}\\
\le & C\bigg[\|e^{\tau\phi}x_{n+1}^{\frac{2s-1}{2}}|x|f\|_{L^{2}(\mathbb{R}_{+}^{n+1})}^{2}+\tau^{2-2s}\|e^{\tau\phi}V|x|^{(1-\alpha)s}\tilde{u}\|_{L^{2}(\mathbb{R}^{n}\times\{0\})}\bigg]
\end{align*}
for all $\tau\ge\tau_{0}$. 
\end{thm}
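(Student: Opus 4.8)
The plan is to deduce this from the differentiable Carleman estimate of Theorem~\ref{thm:Carl-diff} by a splitting argument in the spirit of \cite{RW19}, at the cost of a sub-elliptic loss in the powers of $\tau$ and $|x|$. First I would isolate the rough boundary contribution. Writing $u:=\tilde{u}|_{\mathbb{R}^{n}\times\{0\}}$, the co-normal condition reads $\lim_{x_{n+1}\to0}x_{n+1}^{1-2s}\partial_{n+1}\tilde{u}=Vu$. I split $\tilde{u}=\tilde{u}_{\sharp}+\tilde{u}_{\flat}$, where $\tilde{u}_{\flat}$ is the Caffarelli--Silvestre extension (with weight $x_{n+1}^{1-2s}$) of $u_{\flat}:=c_{n,s}(-P)^{-s}(Vu)$, so that $\tilde{u}_{\flat}$ solves the homogeneous degenerate equation and $\lim_{x_{n+1}\to0}x_{n+1}^{1-2s}\partial_{n+1}\tilde{u}_{\flat}=Vu$. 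Since $V\in L^{\infty}$ and $u\in L^{2}$, Lemma~\ref{lem:well2} gives $u_{\flat}\in H^{2s}(\mathbb{R}^{n})$ with $\|u_{\flat}\|_{H^{2s}}\le C\|Vu\|_{L^{2}}$, while Lemma~\ref{lem:well1} together with \eqref{eq:apriori} controls $\tilde{u}_{\flat}$ and $x_{n+1}^{\frac{1-2s}{2}}\nabla\tilde{u}_{\flat}$ in $L^{2}(\mathbb{R}_{+}^{n+1})$ by $\|Vu\|_{L^{2}}$.

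Next I would run the differentiable estimate on the smooth part. The function $\tilde{u}_{\sharp}=\tilde{u}-\tilde{u}_{\flat}$ solves the degenerate equation with right-hand side $f$ and with \emph{vanishing} co-normal derivative on $\mathbb{R}^{n}\times\{0\}$. Because $\tilde{u}_{\flat}$ destroys the compact-support hypothesis of Theorem~\ref{thm:Carl-diff}, I multiply by a cutoff $\chi$ equal to $1$ on a neighbourhood of $\mathrm{supp}\,\tilde{u}$ and vanishing near $B_{1}^{+}$; the commutator $[\,\partial_{n+1}x_{n+1}^{1-2s}\partial_{n+1}+x_{n+1}^{1-2s}\sum_{j,k}a_{jk}\partial_{j}\partial_{k},\,\chi\,]\tilde{u}_{\sharp}$ is supported in $\{1\le|x|\le2\}$ and, on $\mathrm{supp}\,\tilde{u}$, equals $-[\,\cdots,\chi\,]\tilde{u}_{\flat}$, hence is controlled by the $\tilde{u}_{\flat}$-bounds of Step~1. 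Applying Theorem~\ref{thm:Carl-diff} with $V\equiv0$ to $\chi\tilde{u}_{\sharp}$ then produces the full left-hand side for $\chi\tilde{u}_{\sharp}$, bounded by $C\|e^{\tau\phi}x_{n+1}^{\frac{2s-1}{2}}|x|f\|_{L^{2}}^{2}$ plus the transferred commutator contribution.

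The remaining, and hardest, task is to bound $\tau^{3}\|e^{\tau\phi}|x|^{\frac{3\alpha}{2}-1}x_{n+1}^{\frac{1-2s}{2}}\tilde{u}_{\flat}\|_{L^{2}}^{2}+\tau\|e^{\tau\phi}|x|^{\frac{\alpha}{2}}x_{n+1}^{\frac{1-2s}{2}}\nabla\tilde{u}_{\flat}\|_{L^{2}}^{2}$ by the boundary term on the right. Here I would use that $\tilde{u}_{\flat}$ is the extension of $(-P)^{-s}(Vu)$: conjugating by $e^{\tau\phi}$ and commuting the weight through the extension and through the fractional operator reduces the problem to mapping properties of $(-P)^{-s}$ between weighted $L^{2}$-spaces (available from the Balakrishnan calculus and additivity \eqref{eq:add} of Section~\ref{sec:definition}) together with the a priori bound \eqref{eq:apriori}. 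Since $(-P)^{-s}$ only gains $2s$ derivatives, commuting the weight costs the factor $|x|^{(1-\alpha)s}$ and the polynomial loss $\tau^{2-2s}$ instead of the $\tau^{2}$ one would get in the differentiable case; this sub-ellipticity is precisely what limits the exponent in the subsequent Landis statement. I expect this step to be the main obstacle: $\tilde{u}_{\flat}$ is a \emph{global} extension that need not decay, so one cannot simply insert $e^{\tau\phi}$ and read off the weighted data norm — the delicate point is to commute the weight through the non-local operators and to track exactly the resulting powers of $\tau$ and $|x|$.

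Finally, writing $\tilde{u}=\chi\tilde{u}_{\sharp}+(1-\chi)\tilde{u}_{\sharp}+\tilde{u}_{\flat}$ and noting that $(1-\chi)\tilde{u}_{\sharp}=-(1-\chi)\tilde{u}_{\flat}$ on the relevant part of the support and is otherwise already controlled by the $\tilde{u}_{\flat}$-estimates, I add the bounds of the previous two steps, absorb the commutator and lower-order contributions into the left-hand side by taking $\tau\ge\tau_{0}$ large, and obtain the asserted inequality.
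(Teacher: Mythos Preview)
Your splitting has a genuine gap. You define $\tilde u_{\flat}$ as the Caffarelli--Silvestre extension of $u_{\flat}=c_{n,s}(-P)^{-s}(Vu)$, which is a \emph{fixed} (i.e.\ $\tau$-independent) function. Since $\tilde u$ has compact support, $Vu$ does too, but $(-P)^{-s}(Vu)$ only decays polynomially in $|x'|$ (of order roughly $|x'|^{-n-2s}$ in the Laplacian case), and its extension $\tilde u_{\flat}$ inherits this. The weight $e^{\tau\phi}=e^{\tau|x|^{\alpha}}$ grows super-exponentially, so $e^{\tau\phi}|x|^{\frac{3\alpha}{2}-1}x_{n+1}^{\frac{1-2s}{2}}\tilde u_{\flat}$ is not in $L^{2}(\mathbb R_{+}^{n+1})$ at all. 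The quantity you call the ``remaining, and hardest, task'' is therefore infinite, and no commutator argument with $(-P)^{-s}$ can repair this: commuting $e^{\tau\phi}$ through a nonlocal operator of order $-2s$ produces terms involving arbitrarily high derivatives of $e^{\tau\phi}$, each carrying additional powers of $\tau|x|^{\alpha-1}$, and there is no cancellation that would restore integrability.

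The paper (following \cite{RW19}) avoids this by making the splitting \emph{$\tau$-dependent and local}. After passing to conformal coordinates, one writes $\overline u=u_{1}+u_{2}$ where $u_{1}$ solves the degenerate equation with an additional large absorbing term $-K^{2}\tau^{2}|\varphi'|^{2}\omega_{n+1}^{1-2s}u_{1}$ and carries the full Neumann data, while $u_{2}$ has zero Neumann data. The point of the absorbing term is that testing the $u_{1}$-equation against $\tau^{2}e^{2\tau\varphi}|\varphi''|^{2}u_{1}$ yields a \emph{weighted} elliptic estimate directly (the term $K^{2}\tau^{4}\|e^{\tau\varphi}\varphi'\varphi''\omega_{n+1}^{\frac{1-2s}{2}}u_{1}\|^{2}$ appears on the left with a large constant), and the trace interpolation of Proposition~\ref{prop:inter1} then converts the boundary contribution into the $\tau^{2-2s}$ term. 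For $u_{2}$ one applies the differentiable Carleman estimate (Theorem~\ref{thm:Carl-diff}) with $V\equiv0$, the right-hand side being $-K^{2}\tau^{2}|\varphi'|^{2}\omega_{n+1}^{1-2s}u_{1}$, which is already controlled by the $u_{1}$-estimate. The key structural difference from your proposal is that the elliptic piece is defined through a $\tau$-dependent PDE whose solution automatically lives in the weighted space, rather than through a fixed nonlocal inverse that does not.
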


\begin{proof}
[Proof of Theorem~{\rm \ref{thm:Carl-ndiff}}] \textbf{Step 1: Changing the coordinates.} As in the proof of Theorem
\ref{thm:Carl-diff}, firstly, we pass to conformal coordinates. With
the notations mentioned before, recall \eqref{eq:Carl1}: 
\[
\bigg[\omega_{n+1}^{1-2s}\partial_{t}^{2}+\sum_{j=1}^{n+1}\Omega_{j}\omega_{n+1}^{1-2s}\Omega_{j}-\omega_{n+1}^{1-2s}\frac{(n-2s)^{2}}{4}\bigg]\overline{u}+R\overline{u}=\tilde{f},
\]
where 
\begin{align*}
R= & \omega_{n+1}^{1-2s}\sum_{j,k=1}^{n}(a_{jk}-\delta_{jk})\bigg[\omega_{j}\omega_{k}\partial_{t}^{2}+\omega_{j}\Omega_{k}\partial_{t}+\omega_{k}\Omega_{j}\partial_{t}+\frac{1}{2}\Omega_{j}\Omega_{k}+\frac{1}{2}\Omega_{k}\Omega_{j}\bigg]\\
 & +\omega_{n+1}^{1-2s}\sum_{j,k=1}^{n}(a_{jk}-\delta_{jk})\bigg[(\delta_{jk}-(n+2-2s)\omega_{j}\omega_{k})\partial_{t}\\
 & \qquad-\frac{n+1-2s}{2}\omega_{j}\Omega_{k}-\frac{n+1-2s}{2}\omega_{k}\Omega_{j}\bigg]\\
 & +\omega_{n+1}^{1-2s}\sum_{j,k=1}^{n}(a_{jk}-\delta_{jk})\bigg[\frac{(n-2s)^{2}}{4}\omega_{j}\omega_{k}-\frac{n-2s}{2}(\delta_{jk}-2\omega_{j}\omega_{k})\bigg].
\end{align*}

\textbf{Step 2: Splitting $\overline{u}$ into elliptic and subelliptic parts.} We split $\overline{u}$ into two parts $\overline{u}=u_{1}+u_{2}$.
Here $u_{1}$ is a solution to 
\begin{align}
\bigg[\omega_{n+1}^{1-2s}\partial_{t}^{2}+\sum_{j=1}^{n+1}\Omega_{j}\omega_{n+1}^{1-2s}\Omega_{j}-\omega_{n+1}^{1-2s}\frac{(n-2s)^{2}}{4}-K^{2}\tau^{2}|\varphi'|^{2}\omega_{n+1}^{1-2s}\bigg]u_{1}+Ru_{1} & =\tilde{f}\quad\text{in }\mathcal{S}_{+}^{n}\times\mathbb{R},\label{eq:Carl-ndiff1}\\
\lim_{\omega_{n+1}\rightarrow0}\omega_{n+1}^{1-2s}\Omega_{n+1}u_{1}=\lim_{\omega_{n+1}\rightarrow0}\omega_{n+1}^{1-2s}\Omega_{n+1}\overline{u} & \quad\text{on }\partial\mathcal{S}_{+}^{n}\times\mathbb{R}.\nonumber 
\end{align}
We remark that existence of unique energy solution to this problem is followed by the Lax-Milgram theorem in $H^{1}(\mathcal{S}_{+}^{n}\times\mathbb{R},\omega_{n+1}^{1-2s})$. 

\textbf{Step 2.1: Obtain an elliptic estimate.} Testing $\tau^{2}e^{2\tau\varphi}|\varphi''|^{2}u_{1}$ in \eqref{eq:Carl-ndiff1}, for $\delta>0$, we reach 
\begin{align*}
 & \tau^{2}\|e^{\tau\varphi}\varphi''\omega_{n+1}^{\frac{1-2s}{2}}\partial_{t}u_{1}\|^{2}+\tau^{2}\|e^{\tau\varphi}\varphi''\omega_{n+1}^{\frac{1-2s}{2}}\nabla_{\mathcal{S}^{n}}u_{1}\|^{2}+\tau^{2}\frac{(n-2s)^{2}}{4}\|e^{\tau\varphi}\varphi''\omega_{n+1}^{\frac{1-2s}{2}}u_{1}\|^{2}\\
 & +K^{2}\tau^{4}\|e^{\tau\varphi}\varphi'\varphi''\omega_{n+1}^{\frac{1-2s}{2}}u_{1}\|^{2}\\
= & -\tau^{2}\langle e^{\tau\varphi}\omega_{n+1}^{\frac{2s-1}{2}}\tilde{f},e^{\tau\varphi}|\varphi''|^{2}\omega_{n+1}^{\frac{1-2s}{2}}u_{1}\rangle-\langle\tau e^{\tau\varphi}\varphi'''\omega_{n+1}^{\frac{1-2s}{2}}\partial_{t}u_{1},\tau^{2}e^{\tau\varphi}\varphi'\varphi''\omega_{n+1}^{\frac{1-2s}{2}}u_{1}\rangle\\
 & +\tau^{2}\langle Ru_{1},e^{2\tau\varphi}|\varphi''|^{2}u_{1}\rangle-2\langle\tau e^{\tau\varphi}\varphi''\omega_{n+1}^{\frac{1-2s}{2}}\partial_{t}u_{1},\tau e^{\tau\varphi}\varphi'''\omega_{n+1}^{\frac{1-2s}{2}}u_{1}\rangle\\
 & -\tau^{2}\langle e^{\tau\varphi}\varphi''e^{\alpha st}\omega_{n+1}^{1-2s}\Omega_{n+1}u_{1},e^{\tau\varphi}\varphi'\varphi''\omega_{n+1}^{\frac{1-2s}{2}}u_{1}\rangle_{0}\\
\le & \|e^{\tau\varphi}\omega_{n+1}^{\frac{2s-1}{2}}\tilde{f}\|^{2}+\tau^{4}\|e^{\tau\varphi}|\varphi''|^{2}\omega_{n+1}^{\frac{1-2s}{2}}u_{1}\|^{2}+\delta\tau^{2}\|e^{\tau\varphi}\varphi'''\omega_{n+1}^{\frac{1-2s}{2}}\partial_{t}u_{1}\|^{2}\\
 & +C_{\delta}\tau^{4}\|e^{\tau\varphi}\varphi'\varphi''\omega_{n+1}^{\frac{1-2s}{2}}u_{1}\|^{2}+\delta\tau^{2}\|e^{\tau\varphi}\varphi'''\omega_{n+1}^{\frac{1-2s}{2}}\partial_{t}u_{1}\|^{2}+C_{\delta}\tau^{2}\|e^{\tau\varphi}\varphi'''\omega_{n+1}^{\frac{1-2s}{2}}u_{1}\|^{2}\\
 & +\tau^{2}\|e^{\tau\varphi}\varphi''e^{\alpha st}\omega_{n+1}^{1-2s}\Omega_{n+1}\overline{u}\|_{0}\|e^{\tau\varphi}\varphi'\varphi''\omega_{n+1}^{\frac{1-2s}{2}}u_{1}\|_{0}+\tau^{2}|\langle Ru_{1},e^{2\tau\varphi}|\varphi''|^{2}u_{1}\rangle|.
\end{align*}
Firstly, we choose small $\delta>0$ and small $\epsilon>0$, then choose large $K>1$, so 
\begin{align}
 & \tau^{2}\|e^{\tau\varphi}\varphi''\omega_{n+1}^{\frac{1-2s}{2}}\partial_{t}u_{1}\|^{2}+\tau^{2}\|e^{\tau\varphi}\varphi''\omega_{n+1}^{\frac{1-2s}{2}}\nabla_{\mathcal{S}^{n}}u_{1}\|^{2}+\tau^{4}\|e^{\tau\varphi}\varphi'\varphi''\omega_{n+1}^{\frac{1-2s}{2}}u_{1}\|^{2}\nonumber \\
\le & C\|e^{\tau\varphi}\omega_{n+1}^{\frac{2s-1}{2}}\tilde{f}\|^{2}+C\tau^{2}\|e^{\tau\varphi}\varphi''e^{\alpha st}\omega_{n+1}^{1-2s}\Omega_{n+1}\overline{u}\|_{0}\|e^{\tau\varphi}\varphi'\varphi''\omega_{n+1}^{\frac{1-2s}{2}}u_{1}\|_{0}\nonumber \\
\le & C\|e^{\tau\varphi}\omega_{n+1}^{\frac{2s-1}{2}}\tilde{f}\|^{2}+C_{\eta}\tau^{2-2s}\|e^{\tau\varphi}\varphi''e^{\alpha st}\omega_{n+1}^{1-2s}\Omega_{n+1}\overline{u}\|_{0}+\eta\tau^{2+2s}\|e^{\tau\varphi}\varphi'\varphi''\omega_{n+1}^{\frac{1-2s}{2}}u_{1}\|_{0}.\label{eq:Carl-ndiff2}
\end{align}
From Proposition \ref{prop:inter1}, we have 
\[
|\varphi''|^{2}e^{2\alpha st}\int_{\partial\mathcal{S}_{+}^{n}}u_{1}^{2}\le C\tilde{\tau}^{2-2s}|\varphi''|^{2}e^{2\alpha st}\int_{\mathcal{S}_{+}^{n}}\omega_{n+1}^{1-2s}u_{1}^{2}+C\tilde{\tau}^{-2s}|\varphi''|^{2}e^{2\alpha st}\int_{\mathcal{S}_{+}^{n}}\omega_{n+1}^{1-2s}|\nabla_{\mathcal{S}^{n}}u_{1}|^{2}.
\]
Choosing $\tilde{\tau}=e^{\alpha t}\tau$, we reach 
\[
|\varphi''|^{2}e^{2\alpha st}\int_{\partial\mathcal{S}_{+}^{n}}u_{1}^{2}\le C\tau^{2-2s}|\varphi''|^{2}e^{2\alpha t}\int_{\mathcal{S}_{+}^{n}}\omega_{n+1}^{1-2s}u_{1}^{2}+C\tau^{-2s}|\varphi''|^{2}\int_{\mathcal{S}_{+}^{n}}\omega_{n+1}^{1-2s}|\nabla_{\mathcal{S}^{n}}u_{1}|^{2}.
\]
Multiplying with $e^{2\tau\varphi}$, using that $\varphi'=\alpha e^{\alpha t}$
and integrating in the radial direction, thus implies 
\[
\tau^{2+2s}\|e^{\tau\varphi}|\varphi''|e^{\alpha st}u_{1}\|_{0}^{2}\le C\tau^{4}\|e^{\tau\varphi}\omega_{n+1}^{\frac{1-2s}{2}}\varphi'\varphi''u_{1}\|^{2}+C\tau^{2}\|e^{\tau\varphi}\omega_{n+1}^{\frac{1-2s}{2}}\varphi''\nabla_{\mathcal{S}^{n}}u_{1}\|^{2}.
\]
Plug the inequality above into \eqref{eq:Carl-ndiff2}, and choose $\eta>0$ small, so 
\begin{align}
 & \tau^{2}\|e^{\tau\varphi}\varphi''\omega_{n+1}^{\frac{1-2s}{2}}\partial_{t}u_{1}\|^{2}+\tau^{2}\|e^{\tau\varphi}\varphi''\omega_{n+1}^{\frac{1-2s}{2}}\nabla_{\mathcal{S}^{n}}u_{1}\|^{2}+\tau^{4}\|e^{\tau\varphi}\varphi'\varphi''\omega_{n+1}^{\frac{1-2s}{2}}u_{1}\|^{2}\nonumber \\
\le & C\|e^{\tau\varphi}\omega_{n+1}^{\frac{2s-1}{2}}\tilde{f}\|^{2}+C\tau^{2-2s}\|e^{\tau\varphi}\varphi''e^{\alpha st}\omega_{n+1}^{1-2s}\Omega_{n+1}\overline{u}\|_{0}.\label{eq:Carl-ndiff3}
\end{align}

\textbf{Step 2.2: Obtaining a sub-elliptic estimate.} Indeed, $u_{2}$ satisfies 
\begin{align*}
\bigg[\omega_{n+1}^{1-2s}\partial_{t}^{2}+\sum_{j=1}^{n+1}\Omega_{j}\omega_{n+1}^{1-2s}\Omega_{j}-\omega_{n+1}^{1-2s}\frac{(n-2s)^{2}}{4}\bigg]u_{2}+Ru_{2}=-K^{2}\tau^{2}|\varphi'|^{2}\theta_{n+1}^{1-2s}u_{1} & \quad\text{in }\mathcal{S}_{+}^{n}\times\mathbb{R},\\
\lim_{\omega_{n+1}\rightarrow0}\omega_{n+1}^{1-2s}\Omega_{n+1}u_{2}=0 & \quad\text{on }\partial\mathcal{S}_{+}^{n}\times\mathbb{R}.
\end{align*}
To compare with \eqref{eq:Carl1}, we should put 
\[
\tilde{f}=-K^{2}\tau|\varphi'|^{2}\omega_{n+1}^{1-2s}u_{1}\quad\text{and}\quad\tilde{V}\equiv0
\]
in \eqref{eq:Carl6}. Omitting the second derivative terms, we obtain
\[
\tau^{3}\|\varphi'|\varphi''|^{\frac{1}{2}}\overline{v}\|^{2}+\tau\||\varphi''|^{\frac{1}{2}}\partial_{t}\overline{v}\|^{2}+\tau\||\varphi'|^{\frac{1}{2}}\omega_{n+1}^{\frac{1-2s}{2}}\nabla_{\mathcal{S}^{n}}\omega_{n+1}^{\frac{2s-1}{2}}\overline{v}\|^{2}\le CK^{4}\tau^{4}\|e^{\tau\varphi}|\varphi'|^{2}\omega_{n+1}^{\frac{1-2s}{2}}u_{1}\|^{2},
\]
that is, 
\begin{align}
 & \tau^{3}\|e^{\tau\varphi}\varphi'|\varphi''|^{\frac{1}{2}}\omega_{n+1}^{\frac{1-2s}{2}}u_{2}\|^{2}+\tau\|e^{\tau\varphi}|\varphi''|^{\frac{1}{2}}\omega_{n+1}^{\frac{1-2s}{2}}\partial_{t}u_{2}\|^{2}+\tau\|e^{\tau\varphi}|\varphi''|^{\frac{1}{2}}\omega_{n+1}^{\frac{1-2s}{2}}\nabla_{\mathcal{S}^{n}}u_{2}\|^{2}\nonumber \\
\le & CK^{4}\tau^{4}\|e^{\tau\varphi}|\varphi'|^{2}\omega_{n+1}^{\frac{1-2s}{2}}u_{1}\|^{2}.\label{eq:Carl-ndiff4}
\end{align}

\textbf{Step 3: Conclusion.} Summing up \eqref{eq:Carl-ndiff3} and \eqref{eq:Carl-ndiff4}, since
$\overline{u}=u_{1}+u_{2}$, so  
\begin{align}
 & \tau^{3}\|e^{\tau\varphi}\varphi'|\varphi''|^{\frac{1}{2}}\omega_{n+1}^{\frac{1-2s}{2}}\overline{u}\|^{2}+\tau\|e^{\tau\varphi}|\varphi''|^{\frac{1}{2}}\omega_{n+1}^{\frac{1-2s}{2}}\partial_{t}\overline{u}\|^{2}+\tau\|e^{\tau\varphi}|\varphi''|^{\frac{1}{2}}\omega_{n+1}^{\frac{1-2s}{2}}\nabla_{\mathcal{S}^{n}}\overline{u}\|^{2} \nonumber \\
\le & C\bigg[\|e^{\tau\varphi}\omega_{n+1}^{\frac{2s-1}{2}}\tilde{f}\|^{2}+\tau^{2-2s}\|e^{\tau\varphi}\varphi''e^{\alpha st}\omega_{n+1}^{1-2s}\Omega_{n+1}\overline{u}\|_{0}^{2}\bigg]. \label{eq:carl-ndff-polar}
\end{align}
Finally, plug in the boundary condition 
\[
\lim_{\omega_{n+1}\rightarrow0}\omega_{n+1}^{1-2s}\Omega_{n+1}\overline{u}=\tilde{V}\overline{u},
\]
and switch back to the Cartesian coordinate, we obtain our result. 
\end{proof}

\section{\label{sec:result}Proofs of Theorem \ref{thm:result1} and Theorem \ref{thm:result2}}
\begin{proof}
[Proof of Theorem~{\rm \ref{thm:result1}}] \textbf{Step 1: Applying Carleman estimate.} Define $w:=\eta_{R}\tilde{u}$,
where $\eta_{R}$ is radial, 
\begin{equation}
\eta_{R}(x)=\begin{cases}
1 & ,2\le|x|\le R,\\
0 & ,|x|\le1\text{ or }|x|\ge2R,
\end{cases} \label{eq:eta-R-cutoff}
\end{equation}
and satisfies $|\nabla\eta_{R}|\le C/R$, $|\nabla^{2}\eta_{R}|\le C/R^{2}$
in $A_{R,2R}^{+}$,
\begin{align*}
|\nabla\eta_{R}| & \le C/R,\quad|\nabla^{2}\eta_{R}|\le C/R^{2}\quad\text{in }A_{R,2R}^{+},\\
|\nabla\eta_{R}| & \le C,\quad|\nabla^{2}\eta_{R}|\le C\quad\text{in }A_{1,2}^{+}.
\end{align*}
Note that 
\[
\bigg[\partial_{n+1}x_{n+1}^{1-2s}\partial_{n+1}+x_{n+1}^{1-2s}\sum_{j,k=1}^{n}a_{jk}\partial_{j}\partial_{k}\bigg]w=f,
\]
where 
\begin{align*}
f= & x_{n+1}^{1-2s}\bigg[(1-2s)x_{n+1}^{-1}\partial_{n+1}\eta_{R}\bigg]\tilde{u}+x_{n+1}^{1-2s}\bigg[\partial_{n+1}^{2}\eta_{R}+\sum_{j,k=1}^{n}a_{jk}\partial_{j}\partial_{k}\eta_{R}\bigg]\tilde{u}\\
 & +2x_{n+1}^{1-2s}\bigg[(\partial_{n+1}\eta_{R})(\partial_{n+1}\tilde{u})+\sum_{j,k=1}^{n}a_{jk}(\partial_{k}\eta_{R})(\partial_{j}\tilde{u})\bigg]\\
 & -x_{n+1}^{1-2s}\sum_{j,k=1}^{n}(\partial_{j}a_{jk})(\partial_{k}\tilde{u})\eta_{R}.
\end{align*}
Since $\eta_{R}$ is radial, then $\partial_{n+1}\eta_{R}=\eta_{R}'\partial_{n+1}|x|=0$
on $\mathbb{R}^{n}\times\{0\}$. Thus, 
\[
\lim_{x_{n+1}\rightarrow0}x_{n+1}^{1-2s}\partial_{n+1}w=\lim_{x_{n+1}\rightarrow0}x_{n+1}^{1-2s}\eta_{R}\partial_{n+1}\tilde{u}=c_{n,s}^{-1}q\eta_{R}u=c_{n,s}^{-1}qw\quad\text{on }\mathbb{R}^{n}\times\{0\}.
\]
Note that $w$ is admissible in the Carleman estimate in Theorem \ref{thm:Carl-diff}.
For $\beta>1$, since $|q|\le1$ and $|x'||\nabla' q|\le1$, we have
\begin{align}
 & \tau^{3}\|e^{\tau\phi}|x|^{\frac{3\beta}{2}-1}x_{n+1}^{\frac{1-2s}{2}}w\|_{L^{2}(\mathbb{R}_{+}^{n+1})}^{2}+\tau\|e^{\tau\phi}|x|^{\frac{\beta}{2}}x_{n+1}^{\frac{1-2s}{2}}\nabla w\|_{L^{2}(\mathbb{R}_{+}^{n+1})}^{2}\nonumber \\
 & +\tau^{-1}\|e^{\tau\phi}|x|^{-\frac{\beta}{2}+1}x_{n+1}^{\frac{1-2s}{2}}\nabla(\nabla'w)\|_{L^{2}(\mathbb{R}_{+}^{n+1})}^{2}\nonumber \\
\le & C\bigg[\|e^{\tau\phi}x_{n+1}^{\frac{2s-1}{2}}|x|f\|_{L^{2}(\mathbb{R}_{+}^{n+1})}^{2}+\tau\|e^{\tau\phi}|x|^{\frac{\beta}{2}}w\|_{L^{2}(\mathbb{R}^{n}\times\{0\})}^{2}\nonumber \\
 & \qquad+\tau^{-1}\|e^{\tau\phi}|x|^{-\frac{\beta}{2}+1}\nabla'w\|_{L^{2}(\mathbb{R}^{n}\times\{0\})}^{2}\bigg].\label{eq:pf1}
\end{align}

\textbf{Step 2: Estimating the bulk contributions.} Since $1\le\frac{|x|}{R}$ in $A_{R,2R}^{+}$ and $1\le|x|$ in $A_{1,2}^{+}$,
then 
\begin{align*}
 & \|e^{\tau\phi}x_{n+1}^{\frac{2s-1}{2}}|x|f\|_{L^{2}(\mathbb{R}_{+}^{n+1})}^{2}\\
\le & C\bigg[R^{-4}\|e^{\tau\phi}x_{n+1}^{\frac{1-2s}{2}}|x|\tilde{u}\|_{L^{2}(A_{R,2R}^{+})}^{2}+R^{-2}\|e^{\tau\phi}x_{n+1}^{\frac{1-2s}{2}}|x|\nabla\tilde{u}\|_{L^{2}(A_{R,2R}^{+})}^{2}\\
 & +\|e^{\tau\phi}x_{n+1}^{\frac{1-2s}{2}}|x|\tilde{u}\|_{L^{2}(A_{1,2}^{+})}^{2}+\|e^{\tau\phi}x_{n+1}^{\frac{1-2s}{2}}|x|\nabla\tilde{u}\|_{L^{2}(A_{1,2}^{+})}^{2}+\|e^{\tau\phi}x_{n+1}^{\frac{1-2s}{2}}\nabla w\|_{L^{2}(\mathbb{R}_{+}^{n+1})}^{2}\bigg].
\end{align*}
Write $\tilde{\phi}(r)=\phi(x)=r^{\beta}$ with $r=|x|$, note that
\begin{align*}
 & R^{-4}\|e^{\tau\phi}x_{n+1}^{\frac{1-2s}{2}}|x|\tilde{u}\|_{L^{2}(A_{R,2R}^{+})}^{2}+R^{-2}\|e^{\tau\phi}x_{n+1}^{\frac{1-2s}{2}}|x|\nabla\tilde{u}\|_{L^{2}(A_{R,2R}^{+})}^{2}\\
\le & C\bigg[R^{-2}e^{\tau\tilde{\phi}(2R)}\|x_{n+1}^{\frac{1-2s}{2}}\tilde{u}\|_{L^{2}(A_{R,2R}^{+})}^{2}+e^{\tau\tilde{\phi}(2R)}\|x_{n+1}^{\frac{1-2s}{2}}\nabla\tilde{u}\|_{L^{2}(A_{R,2R}^{+})}^{2}\bigg].
\end{align*}
Now we estimate $\|x_{n+1}^{\frac{1-2s}{2}}\nabla\tilde{u}\|_{L^{2}(A_{R,2R}^{+})}^{2}$.
Choose $\xi_{R}$ satisfies 
\[
\xi_{R}(x)=\begin{cases}
1 & ,R\le|x|\le2R,\\
0 & ,|x|\le\frac{R}{2}\text{ or }|x|\ge2R,
\end{cases}
\]
with $|\nabla\xi_{R}|\le C/R$ for $x\in A_{\frac{R}{2},R}^{+}$
or $x\in A_{2R,3R}^{+}$. Test $\partial_{n+1}x_{n+1}^{1-2s}\partial_{n+1}\tilde{u}+\sum_{j,k=1}^{n}\partial_{j}a_{jk}\partial_{k}\tilde{u}=0$
by the function $\tilde{u}\xi_{R}^{2}$, we reach 
\[
\|x_{n+1}^{\frac{1-2s}{2}}\nabla\tilde{u}\|_{L^{2}(A_{R,2R}^{+})}^{2}\le C \bigg[ \|x_{n+1}^{\frac{1-2s}{2}}u\|_{L^{2}(A_{\frac{R}{2},3R}')}^{2}+R^{-2}\|x_{n+1}^{\frac{1-2s}{2}}\tilde{u}\|_{L^{2}(A_{\frac{R}{2},3R}^{+})}^{2} \bigg].
\]
So, 
\begin{align*}
 & R^{-4}\|e^{\tau\phi}x_{n+1}^{\frac{1-2s}{2}}|x|\tilde{u}\|_{L^{2}(A_{R,2R}^{+})}^{2}+R^{-2}\|e^{\tau\phi}x_{n+1}^{\frac{1-2s}{2}}|x|\nabla\tilde{u}\|_{L^{2}(A_{R,2R}^{+})}^{2}\\
\le & C e^{\tau\tilde{\phi}(2R)} \bigg[\|x_{n+1}^{\frac{1-2s}{2}}u\|_{L^{2}(A_{\frac{R}{2},3R}')}^{2}+R^{-2}\|x_{n+1}^{\frac{1-2s}{2}}\tilde{u}\|_{L^{2}(A_{R,2R}^{+})}^{2}\bigg].
\end{align*}
Using Proposition~\ref{prop:bulk-decay}, we have 
\[
|\tilde{u}(x)|\le C_{1}e^{-C_{2}R^{\alpha}}\quad\text{for }x\in A_{\frac{R}{2},3R}^{+}.
\]
So, if we choose $\beta=\alpha-\epsilon$ for some $\epsilon\in(0,\alpha-1)$, then we have 
\[
\lim_{R\rightarrow\infty}(R^{-4}\|e^{\tau\phi}x_{n+1}^{\frac{1-2s}{2}}|x|\tilde{u}\|_{L^{2}(A_{R,2R}^{+})}^{2}+R^{-2}\|e^{\tau\phi}x_{n+1}^{\frac{1-2s}{2}}|x|\nabla\tilde{u}\|_{L^{2}(A_{R,2R}^{+})}^{2})=0.
\]
However, \eqref{eq:pf1} writes 
\begin{align*}
 & \tau^{3}\|e^{\tau\phi}|x|^{\frac{3\beta}{2}-1}x_{n+1}^{\frac{1-2s}{2}}w\|_{L^{2}(\mathbb{R}_{+}^{n+1})}^{2}+\tau\|e^{\tau\phi}|x|^{\frac{\beta}{2}}x_{n+1}^{\frac{1-2s}{2}}\nabla w\|_{L^{2}(\mathbb{R}_{+}^{n+1})}^{2}\\
 & +\tau^{-1}\|e^{\tau\phi}|x|^{-\frac{\beta}{2}+1}x_{n+1}^{\frac{1-2s}{2}}\nabla(\nabla'w)\|_{L^{2}(\mathbb{R}_{+}^{n+1})}^{2}\\
\le & C\bigg[R^{-4}\|e^{\tau\phi}x_{n+1}^{\frac{1-2s}{2}}|x|\tilde{u}\|_{L^{2}(A_{R,2R}^{+})}^{2}+R^{-2}\|e^{\tau\phi}x_{n+1}^{\frac{1-2s}{2}}|x|\nabla\tilde{u}\|_{L^{2}(A_{R,2R}^{+})}^{2}\\
 & +\|e^{\tau\phi}x_{n+1}^{\frac{1-2s}{2}}|x|\tilde{u}\|_{L^{2}(A_{1,2}^{+})}^{2}+\|e^{\tau\phi}x_{n+1}^{\frac{1-2s}{2}}|x|\nabla\tilde{u}\|_{L^{2}(A_{1,2}^{+})}^{2}+\|e^{\tau\phi}x_{n+1}^{\frac{1-2s}{2}}\nabla w\|_{L^{2}(\mathbb{R}_{+}^{n+1})}^{2}\\
 & +\tau\|e^{\tau\phi}|x|^{\frac{\beta}{2}}w\|_{L^{2}(\mathbb{R}^{n}\times\{0\})}^{2}+\tau^{-1}\|e^{\tau\phi}|x|^{-\frac{\beta}{2}+1}\nabla'w\|_{L^{2}(\mathbb{R}^{n}\times\{0\})}^{2}\bigg].
\end{align*}
Taking $R\rightarrow\infty$ in \eqref{eq:pf1} and choosing large
$\tau$, we reach 
\begin{align}
 & \tau^{3}\|e^{\tau\phi}|x|^{\frac{3\beta}{2}-1}x_{n+1}^{\frac{1-2s}{2}}w\|_{L^{2}(\mathbb{R}_{+}^{n+1})}^{2}+\tau\|e^{\tau\phi}|x|^{\frac{\beta}{2}}x_{n+1}^{\frac{1-2s}{2}}\nabla w\|_{L^{2}(\mathbb{R}_{+}^{n+1})}^{2}\nonumber \\
 & +\tau^{-1}\|e^{\tau\phi}|x|^{-\frac{\beta}{2}+1}x_{n+1}^{\frac{1-2s}{2}}\nabla(\nabla'w)\|_{L^{2}(\mathbb{R}_{+}^{n+1})}^{2}\nonumber \\
\le & C\bigg[\|e^{\tau\phi}x_{n+1}^{\frac{1-2s}{2}}|x|\tilde{u}\|_{L^{2}(A_{1,2}^{+})}^{2}+\|e^{\tau\phi}x_{n+1}^{\frac{1-2s}{2}}|x|\nabla\tilde{u}\|_{L^{2}(A_{1,2}^{+})}^{2}\nonumber \\
 & +\tau\|e^{\tau\phi}|x|^{\frac{\beta}{2}}w\|_{L^{2}(\mathbb{R}^{n}\times\{0\})}^{2}+\tau^{-1}\|e^{\tau\phi}|x|^{-\frac{\beta}{2}+1}\nabla'w\|_{L^{2}(\mathbb{R}^{n}\times\{0\})}^{2}\bigg].\label{eq:pf2}
\end{align}

\textbf{Step 3: Estimating the boundary contributions.} Using Proposition~\ref{prop:inter1}, we have 
\[
\tilde{\tau}|\varphi''|e^{2st}\|v\|_{L^{2}(\partial\mathcal{S}_{+}^{n})}\le C\bigg[\tilde{\tau}^{2-2s}e^{2st}|\varphi''|\|\omega_{n+1}^{\frac{1-2s}{2}}v\|_{L^{2}(\mathcal{S}_{+}^{n})}^{2}+\tilde{\tau}^{-2s}e^{2st}|\varphi''|\|\omega_{n+1}^{\frac{1-2s}{2}}\nabla_{\omega}v\|_{L^{2}(\mathcal{S}_{+}^{n})}^{2}\bigg].
\]
Setting $e^{2st}\tilde{\tau}^{-2s}=\tau^{-2s}$ (i.e. $\tilde{\tau}=\tau e^{t}$),
our choice of $\varphi$ gives 
\[
\tilde{\tau}^{2-2s}e^{2st}|\varphi''|=\tau^{2-2s}e^{2t}|\varphi''|\le\tau^{2-2s}|\varphi'|^{2}|\varphi''|.
\]
Hence, we reach 
\[
\tau^{2s+1}|\varphi''|\|v\|_{L^{2}(\partial\mathcal{S}_{+}^{n})}^{2}\le C\bigg[\tau^{3}|\varphi''||\varphi'|^{2}\|\omega_{n+1}^{\frac{1-2s}{2}}v\|_{L^{2}(\mathcal{S}_{+}^{n})}^{2}+\tau|\varphi''|\|\omega_{n+1}^{\frac{1-2s}{2}}\nabla_{\omega}v\|_{L^{2}(\mathcal{S}_{+}^{n})}^{2}\bigg].
\]
Multiplying the above inequality by $e^{\tau\varphi}$, and then integrating with respect to the radial variable
$t$, we obtain  
\begin{align*}
 & \tau^{2s+1}\|e^{\tau\varphi}|\varphi''|^{\frac{1}{2}}v\|_{L^{2}(\partial\mathcal{S}_{+}^{n}\times\mathbb{R})}^{2}\\
\le & C\bigg[\tau^{3}\|e^{\tau\varphi}\varphi'|\varphi''|^{\frac{1}{2}}\omega_{n+1}^{\frac{1-2s}{2}}v\|_{L^{2}(\mathcal{S}_{+}^{n}\times\mathbb{R})}^{2}+\tau\|e^{\tau\varphi}|\varphi''|^{\frac{1}{2}}\omega_{n+1}^{\frac{1-2s}{2}}\nabla_{\omega}v\|_{L^{2}(\mathcal{S}_{+}^{n}\times\mathbb{R})}^{2} \bigg],
\end{align*}
that is, 
\begin{align*}
 & \tau^{2s+1}\|e^{\tau\phi}|x|^{\frac{\beta}{2}}w\|_{L^{2}(\mathbb{R}^{n}\times\{0\})}^{2}\\
\le & C\bigg[\tau^{3}\|e^{\tau\phi}|x|^{\frac{3\beta}{2}-1}x_{n+1}^{\frac{1-2s}{2}}w\|_{L^{2}(\mathbb{R}_{+}^{n+1})}^{2}+\tau\|e^{\tau\phi}|x|^{\frac{\beta}{2}}x_{n+1}^{\frac{1-2s}{2}}\nabla w\|_{L^{2}(\mathbb{R}_{+}^{n+1})}^{2}\bigg].
\end{align*}
Similarly, we have 
\begin{align*}
 & \tau^{2s-1}\|e^{\tau\phi}|x|^{-\frac{\beta}{2}+1}\nabla'w\|_{L^{2}(\mathbb{R}^{n}\times\{0\})}^{2}\\
\le & C\bigg[\tau\|e^{\tau\phi}|x|^{\frac{\beta}{2}}x_{n+1}^{\frac{1-2s}{2}}\nabla'w\|_{L^{2}(\mathbb{R}_{+}^{n+1})}^{2}+\tau^{-1}\|e^{\tau\phi}|x|^{-\frac{\beta}{2}+1}x_{n+1}^{\frac{1-2s}{2}}\nabla(\nabla'w)\|_{L^{2}(\mathbb{R}_{+}^{n+1})}^{2}\bigg].
\end{align*}
So, for large $\tau$, the boundary terms of \eqref{eq:pf2} are absorbed,
and we reach 
\begin{align*}
 & \tau^{3}\|e^{\tau\phi}|x|^{\frac{3\beta}{2}-1}x_{n+1}^{\frac{1-2s}{2}}w\|_{L^{2}(B_{6}^{+}\setminus B_{4}^{+})}^{2}+\tau\|e^{\tau\phi}|x|^{\frac{\beta}{2}}x_{n+1}^{\frac{1-2s}{2}}\nabla w\|_{L^{2}(B_{6}^{+}\setminus B_{4}^{+})}^{2}\\
\le & \tau^{3}\|e^{\tau\phi}|x|^{\frac{3\beta}{2}-1}x_{n+1}^{\frac{1-2s}{2}}w\|_{L^{2}(\mathbb{R}_{+}^{n+1})}^{2}+\tau\|e^{\tau\phi}|x|^{\frac{\beta}{2}}x_{n+1}^{\frac{1-2s}{2}}\nabla w\|_{L^{2}(\mathbb{R}_{+}^{n+1})}^{2}\\
 & +\tau^{-1}\|e^{\tau\phi}|x|^{-\frac{\beta}{2}+1}x_{n+1}^{\frac{1-2s}{2}}\nabla(\nabla'w)\|_{L^{2}(\mathbb{R}_{+}^{n+1})}^{2}\\
\le & C\bigg[\|e^{\tau\phi}x_{n+1}^{\frac{1-2s}{2}}|x|\tilde{u}\|_{L^{2}(A_{1,2}^{+})}^{2}+\|e^{\tau\phi}x_{n+1}^{\frac{1-2s}{2}}|x|\nabla\tilde{u}\|_{L^{2}(A_{1,2}^{+})}^{2}\bigg].
\end{align*}
Pulling out the exponential weight in the above estimate yields 
\begin{align*}
 & \tau^{3}e^{\tau\tilde{\phi}(4)}\|x_{n+1}^{\frac{1-2s}{2}}\tilde{u}\|_{L^{2}(B_{6}^{+}\setminus B_{4}^{+})}^{2}+\tau e^{\tau\tilde{\phi}(4)}\|x_{n+1}^{\frac{1-2s}{2}}\nabla\tilde{u}\|_{L^{2}(B_{6}^{+}\setminus B_{4}^{+})}^{2}\\
\le & C\bigg[e^{\tau\tilde{\phi}(2)}\|x_{n+1}^{\frac{1-2s}{2}}\tilde{u}\|_{L^{2}(A_{1,2}^{+})}^{2}+e^{\tau\tilde{\phi}(2)}\|x_{n+1}^{\frac{1-2s}{2}}\nabla\tilde{u}\|_{L^{2}(A_{1,2}^{+})}^{2}\bigg].
\end{align*}

\textbf{Step 4: Conclusion.} Since $\tilde{\phi}(4)\ge\tilde{\phi(2)}$, taking $\tau\rightarrow\infty$
will leads a contradiction, unless $\tilde{u}=0$ in $B_{6}^{+}\setminus B_{4}^{+}$.
Finally, applying the unique continuation property for classical second order elliptic equations (see e.g. \cite[Theorem~1.1]{Reg97StrongUniquenessSecondOrderElliptic}), we conclude that $\tilde{u}\equiv0$. 
\end{proof}
Following exactly the arguments in \cite[Theorem~2]{RW19Landis}, we can obtain Theorem~\ref{thm:result2}. For sake of completeness, here we give a sketch of the proof of Theorem~\ref{thm:result2}. 

\begin{proof}
	[Sketch of the proof of Theorem~{\rm \ref{thm:result2}}] Let $\eta_{R}$
	be the function given in \eqref{eq:eta-R-cutoff}, and write $\overline{w}(t,\theta)=\overline{u}(t,\theta)\eta_{R}(e^{t}\theta)\equiv\tilde{u}(e^{t}\theta)\eta_{R}(e^{t}\theta)$,
	where $(t,\theta)$ is the conformal polar coordinate used in the
	proof of Carleman estimates (Theorem~\ref{thm:Carl-diff} and Theorem~\ref{thm:Carl-ndiff}).
	Pluging $\overline{w}$ into \eqref{eq:carl-ndff-polar} (i.e. the
	Carleman estimate in Theorem~\ref{thm:Carl-ndiff} with conformal
	polar coordinate) with $\varphi(t)=e^{\beta t}$ (that is, $\phi(x)=|x|^{\beta}$)
	with $\frac{4s}{4s-1}<\beta<\alpha$, and taking the limit $R\rightarrow\infty$,
	we obtain \cite[equation~(49)]{RW19Landis}: 
	\begin{align}
		& \tau^{3}\|e^{\tau\varphi}|\varphi'||\varphi''|^{\frac{1}{2}}\omega_{n+1}^{\frac{1-2s}{2}}\overline{w}\|_{L^{2}(\mathcal{S}_{+}^{n}\times\mathbb{R})}^{2}+\tau\|e^{\tau\varphi}|\varphi''|^{\frac{1}{2}}\omega_{n+1}^{\frac{1-2s}{2}}\partial_{t}\overline{w}\|_{L^{2}(\mathcal{S}_{+}^{n}\times\mathbb{R})}^{2}\nonumber \\
		& \quad+\tau\|e^{\tau\varphi}|\varphi''|^{\frac{1}{2}}\omega_{n+1}^{\frac{1-2s}{2}}\nabla_{\mathcal{S}^{n}}\overline{w}\|_{L^{2}(\mathcal{S}_{+}^{n}\times\mathbb{R})}^{2}\nonumber \\
		\le & C\bigg(\|e^{\tau\varphi}\omega_{n+1}^{\frac{2s-1}{2}}\tilde{f}\|_{L^{2}(\mathcal{S}_{+}^{n}\times[1,2])}^{2}+\tau^{2-2s}\|e^{\tau\varphi}|\varphi''|\tilde{q}e^{-\beta st}\overline{w}\|_{L^{2}(\partial\mathcal{S}_{+}^{n}\times\mathbb{R})}^{2}\bigg),\label{eq:Carl-utilize1}
	\end{align}
	with $|\tilde{f}|\le C\omega_{n+1}^{1-2s}(|\partial_{t}\overline{u}|+|\nabla_{\mathcal{S}^{n}}\overline{u}|+|\overline{u}|)$.
	Using the trace estimate in Proposition~\ref{prop:inter1} (by replacing
	$\tau$ by $e^{\beta t}\tau$), the boundary term in (\ref{eq:Carl-utilize1})
	can be absorbed in to the left-hand side of this estimate: 
	\begin{align}
		& \tau^{3}\|e^{\tau\varphi}|\varphi'||\varphi''|^{\frac{1}{2}}\omega_{n+1}^{\frac{1-2s}{2}}\overline{w}\|_{L^{2}(\mathcal{S}_{+}^{n}\times\mathbb{R})}^{2}+\tau\|e^{\tau\varphi}|\varphi''|^{\frac{1}{2}}\omega_{n+1}^{\frac{1-2s}{2}}\partial_{t}\overline{w}\|_{L^{2}(\mathcal{S}_{+}^{n}\times\mathbb{R})}^{2}\nonumber \\
		& \quad+\tau\|e^{\tau\varphi}|\varphi''|^{\frac{1}{2}}\omega_{n+1}^{\frac{1-2s}{2}}\nabla_{\mathcal{S}^{n}}\overline{w}\|_{L^{2}(\mathcal{S}_{+}^{n}\times\mathbb{R})}^{2}\le C\|e^{\tau\varphi}\omega_{n+1}^{\frac{2s-1}{2}}\tilde{f}\|_{L^{2}(\mathcal{S}_{+}^{n}\times[1,2])}^{2}.\label{eq:Carl-utilize2}
	\end{align}
	The observation $2\beta+4s-2\beta s\le\beta+2\beta s$ is helpful.
	Pulling out the weight $e^{\tau\varphi}$ in (\ref{eq:Carl-utilize2})
	leads to 
	\[
	e^{\tau\varphi(4)}\tau^{3}\||\varphi'||\varphi''|^{\frac{1}{2}}\omega_{n+1}^{\frac{1-2s}{2}}\overline{u}\|_{L^{2}(\mathcal{S}_{+}^{n}\times[4,6])}\le Ce^{\tau\varphi(2)}\|\omega_{n+1}^{\frac{2s-1}{2}}\tilde{f}\|_{L^{2}(\mathcal{S}_{+}^{n}\times[1,2])}^{2}.
	\]
	Using the monotonicity of $\varphi$, and passing to the limit $\tau\rightarrow\infty$,
	we know that $\overline{u}=0$ in $\mathcal{S}_{+}^{n}\times(4,6)$,
	i.e. $\tilde{u}=0$ in $B_{6}^{+}\setminus \overline{B_{4}^{+}}$. By unique continuation property, we conclude that $\tilde{u}\equiv0$
	in $\mathbb{R}_{+}^{n+1}$, which conclude the argument. 
\end{proof}

\appendix

\section{Auxiliary Lemmas}

\subsection{Some interpolation inequalities}

The following Hardy inequality can be found in \cite[Lemma~4.6]{RS20Calderon}: 
\begin{lem}
\label{lem:Hardy}If $\alpha\neq\frac{1}{2}$
and if $v$ vanishes for $x_{n+1}$ large, then 
\[
\|x_{n+1}^{-\alpha}u\|_{L^{2}(\mathbb{R}_{+}^{n+1})}^{2}\le\frac{4}{(2\alpha-1)^{2}}\|x_{n+1}^{1-\alpha}\partial_{n+1}u\|_{L^{2}(\mathbb{R}_{+}^{n+1})}^{2}+\frac{2}{2\alpha-1}\|\lim_{x_{n+1}\rightarrow0}x_{n+1}^{\frac{1}{2}-\alpha}u\|_{L^{2}(\mathbb{R}^{n}\times\{0\})}^{2}.
\]
\end{lem}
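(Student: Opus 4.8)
The plan is to reduce the inequality to the classical one–dimensional Hardy inequality in the $x_{n+1}$–variable and then integrate the resulting pointwise estimate over $x'\in\mathbb{R}^{n}$. First I would fix $x'$ and set $g(t):=u(x',t)$, which vanishes for $t$ large by hypothesis; the target becomes the scalar inequality
\[
\int_{0}^{\infty}t^{-2\alpha}g(t)^{2}\,dt\le\frac{4}{(2\alpha-1)^{2}}\int_{0}^{\infty}t^{2-2\alpha}g'(t)^{2}\,dt+\frac{2}{2\alpha-1}\Big(\lim_{t\to0}t^{\frac{1}{2}-\alpha}g(t)\Big)^{2},
\]
after which integrating in $x'$ and applying Tonelli's theorem recovers the claim, since $\|x_{n+1}^{-\alpha}u\|^{2}=\int_{\mathbb{R}^{n}}\int_{0}^{\infty}x_{n+1}^{-2\alpha}g^{2}$, and likewise for the other two terms.

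The core computation exploits the identity $\frac{d}{dt}t^{1-2\alpha}=(1-2\alpha)t^{-2\alpha}$. Working on $[\delta,\infty)$ to avoid the singularity at $0$ and integrating by parts (the boundary term at $\infty$ vanishing because $g$ has compact support in $t$) gives
\[
(2\alpha-1)\int_{\delta}^{\infty}t^{-2\alpha}g^{2}\,dt=\delta^{1-2\alpha}g(\delta)^{2}+2\int_{\delta}^{\infty}t^{1-2\alpha}gg'\,dt .
\]
Since $t^{1-2\alpha}=(t^{\frac12-\alpha})^{2}$, the boundary term equals $(\delta^{\frac12-\alpha}g(\delta))^{2}$, and letting $\delta\to0$ produces exactly the trace term. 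For the integral term I would factor $t^{1-2\alpha}gg'=(t^{-\alpha}g)(t^{1-\alpha}g')$ and apply Cauchy--Schwarz to get $2\int_{0}^{\infty}t^{1-2\alpha}gg'\le 2XY$ with $X=\|t^{-\alpha}g\|_{L^{2}(0,\infty)}$, $Y=\|t^{1-\alpha}g'\|_{L^{2}(0,\infty)}$, then Young's inequality $2XY\le\frac{2\alpha-1}{2}X^{2}+\frac{2}{2\alpha-1}Y^{2}$ to absorb $\frac{2\alpha-1}{2}X^{2}$ onto the left-hand side, leaving $X^{2}\le\frac{2}{2\alpha-1}B+\frac{4}{(2\alpha-1)^{2}}Y^{2}$ with $B:=(\lim_{t\to0}t^{\frac12-\alpha}g(t))^{2}\ge0$. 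This is precisely the scalar inequality above. When $\alpha<\tfrac12$ the exponent $\tfrac12-\alpha$ is positive, so the boundary term drops and the same computation yields an even stronger bound; hence the displayed inequality holds a fortiori, and the case $\alpha\neq\tfrac12$ is fully covered.

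The main obstacle is not the algebra but justifying the integration by parts and the existence of the boundary limit under the weak regularity available. I would first prove the inequality for $g$ smooth with $t^{1-\alpha}g'\in L^{2}(0,\infty)$ and $t^{\frac12-\alpha}g(t)$ convergent as $t\to0$: for such $g$ the quantity $Z_{\delta}:=\int_{\delta}^{\infty}t^{-2\alpha}g^{2}$ is monotone in $\delta$, and the identity together with Cauchy--Schwarz bounds $Z_{\delta}$ in terms of $\delta^{1-2\alpha}g(\delta)^{2}$ and $Y$, making the limit $\delta\to0$ legitimate. Then I would pass to general $u$ in the weighted Sobolev class by density, observing that whenever the right-hand side of the claimed inequality is finite, the slice $g=u(x',\cdot)$ lies in the admissible one–dimensional class for almost every $x'$ (and otherwise there is nothing to prove). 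With the pointwise-in-$x'$ estimate in hand, a routine application of Tonelli's theorem finishes the argument.
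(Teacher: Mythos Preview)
Your proposal is correct and follows essentially the same approach as the paper: both proofs use the identity $\partial_{n+1}x_{n+1}^{1-2\alpha}=(1-2\alpha)x_{n+1}^{-2\alpha}$, integrate by parts to pick up the boundary term, and then apply Cauchy--Schwarz/Young to absorb the cross term $\int x_{n+1}^{1-2\alpha}u\,\partial_{n+1}u$ into the left-hand side. The paper carries this out directly on $\mathbb{R}_{+}^{n+1}$ in two lines, whereas you first reduce to the one--dimensional estimate and add a more careful discussion of the $\delta\to 0$ limit and the case $\alpha<\tfrac12$; this extra care is welcome but does not change the argument.
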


\begin{proof}
Using integration by parts, we have 
\begin{align*}
\|x_{n+1}^{-\alpha}u\|_{L^{2}(\mathbb{R}_{+}^{n+1})}^{2}= & \int\partial_{n+1}\bigg[\frac{x_{n+1}^{1-2\alpha}}{1-2\alpha}\bigg]u^{2}\\
= & \frac{2}{2\alpha-1}\int x_{n+1}^{1-2\alpha}u\partial_{n+1}u+\frac{1}{2\alpha-1}\int\lim_{x_{n+1}\rightarrow0}x_{n+1}^{1-2\alpha}u^{2}\\
\le & \frac{1}{2}\frac{4}{(2\alpha-1)^{2}}\|x_{n+1}^{1-\alpha}\partial_{n+1}u\|_{L^{2}(\mathbb{R}_{+}^{n+1})}^{2}+\frac{1}{2}\|x_{n+1}^{-\alpha}u\|_{L^{2}(\mathbb{R}_{+}^{n+1})}^{2}\\
 & +\frac{1}{2\alpha-1}\|\lim_{x_{n+1}\rightarrow0}x_{n+1}^{\frac{1}{2}-\alpha}u\|_{L^{2}(\mathbb{R}^{n}\times\{0\})}^{2},
\end{align*}
which gives our desired result. 
\end{proof}
We shall use the following interpolation inequality in \cite{GR19unique,Rul15unique,RW19Landis}: 
\begin{prop}
[Interpolation inequaliy I]\label{prop:inter1}Let $s\in(0,1)$ and
$u:\mathcal{S}_{+}^{n}\rightarrow\mathbb{R}$ with $u\in H^{1}(\mathcal{S}_{+}^{n},\omega_{n+1}^{1-2s})$.
Then there exists a constant $C=C(n,s)$ such that 
\[
\|u\|_{L^{2}(\partial\mathcal{S}_{+}^{n})}\le C\bigg[\tau^{1-s}\|\omega_{n+1}^{\frac{1-2s}{2}}u\|_{L^{2}(\mathcal{S}_{+}^{n})}+\tau^{-s}\|\omega_{n+1}^{\frac{1-2s}{2}}\nabla_{\omega}u\|_{L^{2}(\mathcal{S}_{+}^{n})}\bigg]
\]
for all $\tau>1$. 
\end{prop}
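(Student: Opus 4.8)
The plan is to localize near the equator $\partial\mathcal{S}_{+}^{n}=\{\omega\in S^{n}:\omega_{n+1}=0\}$, reduce the estimate to a one--dimensional weighted trace inequality, and then read off the powers of $\tau$ from an optimization over an auxiliary length scale. Since $\mathcal{S}_{+}^{n}$ is compact and the weight $\omega_{n+1}^{1-2s}$ is smooth and bounded above and below away from the equator, no hypothesis on the support of $u$ is needed and all of the boundary mass sits in a fixed neighbourhood of $\partial\mathcal{S}_{+}^{n}$.

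\textbf{Step 1 (localization).} I would cover the equator by finitely many charts $U_{i}$ in which, after a rotation making $\omega_{n}>0$, the functions $(z,y):=(\omega_{1},\dots,\omega_{n-1},\omega_{n+1})\in\mathbb{R}^{n-1}\times[0,\delta]$ are coordinates; there $\omega_{n+1}^{1-2s}=y^{1-2s}$ exactly, the Riemannian volume elements of $\mathcal{S}_{+}^{n}$ and of $\partial\mathcal{S}_{+}^{n}$ are comparable to $dz\,dy$ and $dz$ respectively, and $|\partial_{y}u|\le C|\nabla_{\omega}u|$. Picking a subordinate partition of unity $\{\psi_{i}^{2}\}$ with $\sum_{i}\psi_{i}^{2}\equiv1$ near the equator, it suffices to bound $\|\psi_{i}u\|_{L^{2}(\partial\mathcal{S}_{+}^{n})}$, since $\psi_{i}u$ is supported in $U_{i}$ and the cutoff errors satisfy $\|\omega_{n+1}^{\frac{1-2s}{2}}\psi_{i}u\|_{L^{2}(\mathcal{S}_{+}^{n})}\le\|\omega_{n+1}^{\frac{1-2s}{2}}u\|_{L^{2}(\mathcal{S}_{+}^{n})}$ and $\|\omega_{n+1}^{\frac{1-2s}{2}}\partial_{y}(\psi_{i}u)\|_{L^{2}(\mathcal{S}_{+}^{n})}\le C\big(\|\omega_{n+1}^{\frac{1-2s}{2}}u\|_{L^{2}(\mathcal{S}_{+}^{n})}+\|\omega_{n+1}^{\frac{1-2s}{2}}\nabla_{\omega}u\|_{L^{2}(\mathcal{S}_{+}^{n})}\big)$; because $\tau>1$ gives $\tau^{-s}\le\tau^{1-s}$, the extra $\|u\|$ term is absorbed into the first term on the right.

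\textbf{Step 2 (the one--dimensional inequality).} For $v\in H^{1}((0,\delta),y^{1-2s})$, since $2s>0$ we have $\int_{0}^{\mu}|v'|\,dy\le(2s)^{-1/2}\mu^{s}\|y^{\frac{1-2s}{2}}v'\|_{L^{2}(0,\mu)}$ for $0<\mu\le\delta$, so $v$ is absolutely continuous up to $0$. Writing $|v(0)|\le|v(y_{0})|+\int_{0}^{y_{0}}|v'|\,dy$, averaging over $y_{0}\in(0,\mu)$, and applying Cauchy--Schwarz against the weight together with $\int_{0}^{\mu}y^{2s-1}\,dy=\mu^{2s}/(2s)$, I obtain
\[
|v(0)|\le\frac{1}{\sqrt{2s}}\Big(\mu^{s-1}\|y^{\frac{1-2s}{2}}v\|_{L^{2}(0,\mu)}+\mu^{s}\|y^{\frac{1-2s}{2}}v'\|_{L^{2}(0,\mu)}\Big).
\]

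\textbf{Step 3 (optimization and assembly).} Applying this with $v=(\psi_{i}u)(z,\cdot)$ for a.e.\ fixed $z$ (legitimate by Fubini), choosing $\mu=\min(1/\tau,\delta)$ — which for $\tau>1$ gives $\mu^{s}\le\tau^{-s}$ and $\mu^{s-1}\le C(n,s)\,\tau^{1-s}$, treating the cases $1/\tau\le\delta$ and $1/\tau>\delta$ separately — then squaring, integrating in $z$, and transferring back to the sphere via the comparability of the volume elements and $|\partial_{y}u|\le C|\nabla_{\omega}u|$, yields $\|\psi_{i}u\|_{L^{2}(\partial\mathcal{S}_{+}^{n})}\le C\big(\tau^{1-s}\|\omega_{n+1}^{\frac{1-2s}{2}}\psi_{i}u\|_{L^{2}(\mathcal{S}_{+}^{n})}+\tau^{-s}\|\omega_{n+1}^{\frac{1-2s}{2}}\partial_{y}(\psi_{i}u)\|_{L^{2}(\mathcal{S}_{+}^{n})}\big)$. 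Summing over the finitely many charts and inserting the cutoff bounds from Step 1 gives the claim with $C=C(n,s)$. The only delicate point is the bookkeeping of the $\tau$--powers in Step 3 together with the cutoff errors; the entire analytic content is the elementary weighted trace estimate of Step 2, so I do not expect a genuine obstacle.
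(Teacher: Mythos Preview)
Your argument is correct. The paper does not actually prove this proposition: it is stated as a quoted result with the attribution ``We shall use the following interpolation inequality in \cite{Rul15,RW19}'' and no proof is given in the present text. So there is no in-paper proof to compare against.

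That said, your route---localize near the equator, reduce to the one-dimensional weighted trace estimate
\[
|v(0)|\le C_{s}\Big(\mu^{s-1}\|y^{\frac{1-2s}{2}}v\|_{L^{2}(0,\mu)}+\mu^{s}\|y^{\frac{1-2s}{2}}v'\|_{L^{2}(0,\mu)}\Big),
\]
and then set $\mu\sim\tau^{-1}$---is exactly the standard way such boundary--bulk interpolation inequalities are proved (and is in the spirit of the cited references). The computations in Step~2 check out: Cauchy--Schwarz against $y^{2s-1}$ gives precisely the powers $\mu^{s-1}$ and $\mu^{s}$, and the integrability at $y=0$ uses only $s>0$. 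The handling of the bounded range $1<\tau<1/\delta$ in Step~3 and the absorption of the cutoff error $\|\omega_{n+1}^{\frac{1-2s}{2}}u\|$ via $\tau^{-s}\le\tau^{1-s}$ are both fine. No gap.
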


The following trace characterization lemma can be found in \cite[Lemma~4.4]{RS20Calderon}: 
\begin{lem}
\label{lem:traceHs}Let $n\ge1$ and $0<\tilde{s}<1$.
There is a bounded surjective linear map 
\[
T:H^{1}(\mathbb{R}_{+}^{n+1},x_{n+1}^{1-2\tilde{s}})\rightarrow H^{\tilde{s}}(\mathbb{R}^{n}\times\{0\})
\]
so that $u(\bullet,x_{n+1})\rightarrow Tu$ in $L^{2}(\mathbb{R}^{n})$
as $x_{n+1}\rightarrow0$. 
\end{lem}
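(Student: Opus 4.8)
The plan is to prove this trace theorem by a partial Fourier transform in the tangential variables $x'$ combined with a one-dimensional weighted interpolation inequality, in the spirit of the standard treatment of Muckenhoupt-weighted spaces. Writing $\hat{u}(\xi,x_{n+1})$ for the Fourier transform of $u$ in $x'$ only, Plancherel gives
\[
\|u\|_{H^{1}(\mathbb{R}_{+}^{n+1},x_{n+1}^{1-2\tilde{s}})}^{2}=\int_{\mathbb{R}^{n}}\int_{0}^{\infty}x_{n+1}^{1-2\tilde{s}}\Big((1+|\xi|^{2})|\hat{u}(\xi,x_{n+1})|^{2}+|\partial_{n+1}\hat{u}(\xi,x_{n+1})|^{2}\Big)\,dx_{n+1}\,d\xi,
\]
so the problem decouples into a one-parameter family of one-dimensional problems on $(0,\infty)$ with the weight $t^{1-2\tilde{s}}$. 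The analytic core is the scaled estimate: for $g\in H^{1}((0,\infty),t^{1-2\tilde{s}})$ and any $\lambda>0$,
\[
|g(0)|^{2}\le C\Big(\lambda^{2-2\tilde{s}}\int_{0}^{\infty}t^{1-2\tilde{s}}|g|^{2}\,dt+\lambda^{-2\tilde{s}}\int_{0}^{\infty}t^{1-2\tilde{s}}|g'|^{2}\,dt\Big),
\]
which is the half-space analogue of the spherical inequality in Proposition~\ref{prop:inter1}. I would prove it by writing $|g(0)|^{2}=-\int_{0}^{\infty}\partial_{t}(\chi(\lambda t)|g(t)|^{2})\,dt$ for a smooth cutoff $\chi$ with $\chi(0)=1$ supported in $[0,1]$, estimating the zeroth-order term $\lambda\int_{0}^{1/\lambda}|g|^{2}$ and the cross term $\int_{0}^{1/\lambda}|g|\,|g'|$ by Cauchy--Schwarz together with the Hardy inequality of Lemma~\ref{lem:Hardy} (applied with exponent $\alpha=\tfrac{1}{2}-\tilde{s}$) to absorb the resulting factor $\|t^{\tilde{s}-1/2}g\|_{L^{2}}$. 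Then taking $\lambda=\langle\xi\rangle:=(1+|\xi|^{2})^{1/2}$, multiplying by $\langle\xi\rangle^{2\tilde{s}}$ and integrating in $\xi$ yields $\|Tu\|_{H^{\tilde{s}}(\mathbb{R}^{n})}^{2}=\int\langle\xi\rangle^{2\tilde{s}}|\widehat{Tu}(\xi)|^{2}\,d\xi\le C\|u\|_{H^{1}(\mathbb{R}_{+}^{n+1},x_{n+1}^{1-2\tilde{s}})}^{2}$, which is the boundedness of $T$.

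For the asserted $L^{2}$-convergence I would note that for $0<\varepsilon<\delta$,
\[
\|u(\bullet,\varepsilon)-u(\bullet,\delta)\|_{L^{2}(\mathbb{R}^{n})}^{2}=\int_{\mathbb{R}^{n}}\Big|\int_{\varepsilon}^{\delta}\partial_{t}\hat{u}(\xi,t)\,dt\Big|^{2}\,d\xi\le\Big(\int_{0}^{\delta}t^{2\tilde{s}-1}\,dt\Big)\int_{\mathbb{R}^{n}}\int_{0}^{\infty}t^{1-2\tilde{s}}|\partial_{t}\hat{u}(\xi,t)|^{2}\,dt\,d\xi,
\]
and since $\tilde{s}>0$ the prefactor equals $\delta^{2\tilde{s}}/(2\tilde{s})\to 0$; hence $\{u(\bullet,\delta)\}_{\delta>0}$ is Cauchy in $L^{2}(\mathbb{R}^{n})$ as $\delta\to 0$, its limit is by definition $Tu$, and Plancherel identifies $\widehat{Tu}(\xi)=\lim_{\delta\to 0}\hat{u}(\xi,\delta)$ with the boundary value used above. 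Surjectivity I would get by exhibiting a bounded right inverse: given $f\in H^{\tilde{s}}(\mathbb{R}^{n})$, set $\hat{u}_{f}(\xi,x_{n+1})=\hat{f}(\xi)\,\theta(\langle\xi\rangle x_{n+1})$, where $\theta$ is the bounded solution of the Bessel-type ODE $\theta''+\tfrac{1-2\tilde{s}}{r}\theta'-\theta=0$ with $\theta(0)=1$ (a normalized modified Bessel function, decaying exponentially as $r\to\infty$); the scaling $\int_{0}^{\infty}t^{1-2\tilde{s}}|\theta(\langle\xi\rangle t)|^{2}\,dt=\langle\xi\rangle^{2\tilde{s}-2}\int_{0}^{\infty}r^{1-2\tilde{s}}|\theta|^{2}\,dr$ (and the analogue for $\theta'$) then gives $\|u_{f}\|_{H^{1}(\mathbb{R}_{+}^{n+1},x_{n+1}^{1-2\tilde{s}})}^{2}\le C\int\langle\xi\rangle^{2\tilde{s}}|\hat{f}(\xi)|^{2}\,d\xi=C\|f\|_{H^{\tilde{s}}(\mathbb{R}^{n})}^{2}$ while $Tu_{f}=f$ by construction.

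The main obstacle is the one-dimensional estimate of the first paragraph, and specifically its dependence on the sign of $1-2\tilde{s}$: when $\tilde{s}>\tfrac{1}{2}$ the weight blows up at $t=0$ and the bound on $|g(0)|$ is essentially immediate, but when $\tilde{s}<\tfrac{1}{2}$ the weight degenerates at $t=0$ and one must genuinely use the Hardy inequality even to know that $g(0)$ is well defined and controlled. The exponent bookkeeping there, the use of $\langle\xi\rangle$ rather than $|\xi|$ so as to land in the inhomogeneous space $H^{\tilde{s}}$ (and the corresponding replacement of the homogeneous Caffarelli--Silvestre kernel by $\theta(\langle\xi\rangle\cdot)$ in the surjectivity step, which is what makes the extension integrable at low frequencies), and the density argument needed to justify the Hardy step for functions merely in $H^{1}(\mathbb{R}_{+}^{n+1},x_{n+1}^{1-2\tilde{s}})$, are the points requiring care; the remaining ingredients (Plancherel, the Cauchy-sequence argument, the ODE analysis) are routine. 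Since the statement is exactly Lemma~4.4 of \cite{RS20}, one may also simply cite it.
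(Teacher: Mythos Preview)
The paper does not give its own proof of this lemma: it is stated with the attribution ``Lemma~4.4 of \cite{RS20}'' and simply cited, so there is nothing in the paper to compare your argument against. Your final sentence already recognizes this, and citing \cite{RS20} is exactly what the paper does.

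As for the argument you sketch, the strategy (partial Fourier transform in $x'$, reduction to a one-dimensional scaled trace inequality, Cauchy-sequence argument for $L^{2}$-convergence, and a Bessel-type extension for surjectivity) is the standard and correct route. One small point: your invocation of Lemma~\ref{lem:Hardy} with $\alpha=\tfrac{1}{2}-\tilde{s}$ to control $\|t^{\tilde{s}-1/2}g\|_{L^{2}}$ does not quite close, since that choice produces $\|t^{1/2+\tilde{s}}g'\|_{L^{2}}$ on the right, whereas you need the weight $t^{(1-2\tilde{s})/2}$. A cleaner way to get the one-dimensional estimate uniformly in $\tilde{s}\in(0,1)$ is to write $|g(0)|\le |g(t)|+\int_{0}^{t}|g'|$, bound $\int_{0}^{t}|g'|\le (t^{2\tilde{s}}/(2\tilde{s}))^{1/2}\big(\int_{0}^{\infty}r^{1-2\tilde{s}}|g'|^{2}\,dr\big)^{1/2}$ by Cauchy--Schwarz, square, multiply by $t^{1-2\tilde{s}}$, and integrate over $t\in(0,1/\lambda)$; this directly yields your scaled inequality without splitting into cases. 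With that adjustment the proof goes through as you describe.
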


We need the following interpolation inequality in \cite[Proposition 5.11: Step 1]{RS20Calderon}: 
\begin{lem}
[Interpolation inequality II(a)]\label{lem:inter2(a)}For any $w\in H^{1}(\mathbb{R}_{+}^{n+1},x_{n+1}^{2s-1})$
and any $\mu>0$, the following interpolation inequality holds: 
\[
\|w\|_{L^{2}(\mathbb{R}^{n}\times\{0\})}\le C\bigg[\mu^{1-s}(\|x_{n+1}^{\frac{2s-1}{2}}w\|_{L^{2}(\mathbb{R}_{+}^{n+1})}+\|x_{n+1}^{\frac{2s-1}{2}}\nabla w\|_{L^{2}(\mathbb{R}_{+}^{n+1})})+\mu^{-s}\|w\|_{H^{-s}(\mathbb{R}^{n}\times\{0\})}\bigg].
\]
\end{lem}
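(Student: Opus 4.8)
The plan is to pass to the Fourier side in the tangential variables $x'$ and run a frequency-localized trace estimate. Write $\hat{w}(\xi,x_{n+1})$ for the partial Fourier transform of $w$ in $x'$; by Plancherel, $\|w(\cdot,0)\|_{L^{2}(\mathbb{R}^{n})}^{2}=\int_{\mathbb{R}^{n}}|\hat{w}(\xi,0)|^{2}\,d\xi$, and the quantities $\|x_{n+1}^{\frac{2s-1}{2}}w\|^{2}$, $\|x_{n+1}^{\frac{2s-1}{2}}\nabla w\|^{2}$, $\|w(\cdot,0)\|_{H^{-s}}^{2}$ all become weighted integrals in $(\xi,x_{n+1})$. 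First I would apply Lemma~\ref{lem:traceHs} with $\tilde{s}=1-s$ to get that $w(\cdot,0)\in H^{1-s}(\mathbb{R}^{n})$ with $\|w(\cdot,0)\|_{H^{1-s}}\le C(\|x_{n+1}^{\frac{2s-1}{2}}w\|+\|x_{n+1}^{\frac{2s-1}{2}}\nabla w\|)$; since $H^{1-s}\hookrightarrow L^{2}$ this already settles the regime $\mu\ge1$ (where $\mu^{1-s}\ge1$), so from now on we may assume $0<\mu\le1$ and set $R:=1/\mu\ge1$. We then split $\int_{\mathbb{R}^{n}}|\hat{w}(\xi,0)|^{2}\,d\xi=\int_{|\xi|\le R}+\int_{|\xi|>R}$ and treat the two pieces separately.

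For the low-frequency piece I would simply write $|\hat{w}(\xi,0)|^{2}=\langle\xi\rangle^{2s}\langle\xi\rangle^{-2s}|\hat{w}(\xi,0)|^{2}$ and use $\sup_{|\xi|\le R}\langle\xi\rangle^{2s}\le C R^{2s}=C\mu^{-2s}$, obtaining $\int_{|\xi|\le R}|\hat{w}(\xi,0)|^{2}\,d\xi\le C\mu^{-2s}\|w(\cdot,0)\|_{H^{-s}}^{2}$, which is the square of the second term on the right-hand side (if this norm is infinite there is nothing to prove). For the high-frequency piece the key ingredient is a one-dimensional weighted trace inequality: for $g\in H^{1}((0,\infty),t^{2s-1})$ and any $\lambda>0$,
\[
|g(0)|^{2}\le C_{s}\Big(\lambda^{2s}\int_{0}^{\infty}t^{2s-1}|g(t)|^{2}\,dt+\lambda^{2s-2}\int_{0}^{\infty}t^{2s-1}|g'(t)|^{2}\,dt\Big).
\]
This I would prove by writing, for $0<t<1/\lambda$, $|g(0)|^{2}\le 2|g(t)|^{2}+2\big(\int_{0}^{t}r^{1-2s}\,dr\big)\big(\int_{0}^{t}r^{2s-1}|g'(r)|^{2}\,dr\big)$ via the fundamental theorem of calculus and a weighted Cauchy--Schwarz inequality (both $r^{1-2s}$ and $r^{2s-1}$ are integrable near $0$ precisely because $s\in(0,1)$), then multiplying by $t^{2s-1}$, averaging over $t\in(0,1/\lambda)$, and using $\int_{0}^{1/\lambda}t^{2s-1}\,dt\sim\lambda^{-2s}$ and $\int_{0}^{1/\lambda}t\,dt\sim\lambda^{-2}$.

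To conclude I would apply this one-dimensional inequality to $g=\hat{w}(\xi,\cdot)$ with $\lambda=|\xi|$ (legitimate for a.e.\ $\xi$ by Fubini, the trace being $\hat{w}(\xi,0)$ thanks to Lemma~\ref{lem:traceHs} and Plancherel), giving $|\hat{w}(\xi,0)|^{2}\le C|\xi|^{2s}\int_{0}^{\infty}t^{2s-1}|\hat{w}(\xi,t)|^{2}\,dt+C|\xi|^{2s-2}\int_{0}^{\infty}t^{2s-1}|\partial_{x_{n+1}}\hat{w}(\xi,t)|^{2}\,dt$. Integrating over $|\xi|>R=1/\mu$, writing $|\xi|^{2s}=|\xi|^{2s-2}|\xi|^{2}$ in the first term and using $|\xi|^{2s-2}\le R^{2s-2}=\mu^{2-2s}$ throughout (valid because $2s-2<0$), Plancherel turns the right-hand side into $C\mu^{2-2s}\big(\|x_{n+1}^{\frac{2s-1}{2}}\nabla' w\|^{2}+\|x_{n+1}^{\frac{2s-1}{2}}\partial_{x_{n+1}}w\|^{2}\big)\le C\mu^{2(1-s)}\|x_{n+1}^{\frac{2s-1}{2}}\nabla w\|^{2}$. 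Adding the two pieces and taking square roots yields the claimed estimate (the extra term $\mu^{1-s}\|x_{n+1}^{\frac{2s-1}{2}}w\|$ only makes the right-hand side larger). The main obstacle I anticipate is the bookkeeping in the one-dimensional weighted trace inequality — in particular the near-$0$ integrability of $r^{1-2s}$ and $r^{2s-1}$, which is exactly what forces $s\in(0,1)$ — together with justifying the slice-wise application by standard Fubini and approximation arguments; as an alternative one may bypass the one-dimensional computation entirely by combining Lemma~\ref{lem:traceHs} (with $\tilde{s}=1-s$) with the elementary interpolation inequality $\|u\|_{L^{2}}\le C\|u\|_{H^{1-s}}^{s}\|u\|_{H^{-s}}^{1-s}$ and Young's inequality in the form $a^{s}b^{1-s}\le\mu^{1-s}a+\mu^{-s}b$.
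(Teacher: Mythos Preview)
Your proposal is correct. Interestingly, the alternative you mention in your final sentence --- combine Lemma~\ref{lem:traceHs} with $\tilde s=1-s$, the interpolation $\|u\|_{L^2}\le C\|u\|_{H^{1-s}}^{s}\|u\|_{H^{-s}}^{1-s}$ (which is just H\"older on the Fourier side), and Young's inequality $a^{s}b^{1-s}\le \mu^{1-s}a+\mu^{-s}b$ --- is precisely the paper's own proof, in its entirety.

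Your main argument takes a different and more hands-on route: rather than invoking the abstract interpolation $\|u\|_{L^2}\le C\|u\|_{H^{1-s}}^{s}\|u\|_{H^{-s}}^{1-s}$, you localize in frequency at scale $R=1/\mu$, handle $|\xi|\le R$ by the trivial bound $\langle\xi\rangle^{2s}\le CR^{2s}$, and treat $|\xi|>R$ via a one-dimensional weighted trace inequality applied fiberwise with $\lambda=|\xi|$. This is a genuine alternative proof. It is longer but more self-contained: it does not require quoting the trace lemma as a black box for the high-frequency part (your one-dimensional inequality essentially reproves it at the needed scale), and it makes transparent exactly where the constraint $s\in(0,1)$ enters (the near-zero integrability of $r^{1-2s}$ and $r^{2s-1}$). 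The paper's route, by contrast, is a two-line application of H\"older plus the trace lemma, which is cleaner but hides the mechanism. Either is fine; if you want brevity, just promote your alternative to the main argument.
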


\begin{proof}
Let $\langle\bullet\rangle:=\sqrt{1+|\bullet|^{2}}$. Note that 
\begin{align*}
\|w\|_{L^{2}(\mathbb{R}^{n}\times\{0\})} & =\bigg[\int_{\mathbb{R}^{n}\times\{0\}}(\langle\xi\rangle^{2-2s}|\hat{w}|^{2})^{s}(\langle\xi\rangle^{-2s}|\hat{w}|^{2})^{1-s}\,d\xi\bigg]^{\frac{1}{2}}\\
 & \le(\mu^{1-s}\|w\|_{H^{1-s}(\mathbb{R}^{n}\times\{0\})})^{s}(\mu^{-s}\|w\|_{H^{-s}(\mathbb{R}^{n}\times\{0\})})^{1-s}
\end{align*}
and hence our result follows by Lemma~\ref{lem:traceHs} with $\tilde{s}=1-s$. 
\end{proof}
Slightly modify the proof, we can obtain the following: 
\begin{lem}
[Interpolation inequality II(b)]\label{lem:inter2(b)}For any $w\in H^{1}(\mathbb{R}_{+}^{n+1},x_{n+1}^{2s-1})$
and any $\mu>0$, the following interpolation inequality holds: 
\[
\|w\|_{L^{2}(\mathbb{R}^{n}\times\{0\})}\le C\bigg[\mu^{1-s}(\|x_{n+1}^{\frac{2s-1}{2}}w\|_{L^{2}(\mathbb{R}_{+}^{n+1})}+\|x_{n+1}^{\frac{2s-1}{2}}\nabla w\|_{L^{2}(\mathbb{R}_{+}^{n+1})})+\mu^{-2s}\|w\|_{H^{-2s}(\mathbb{R}^{n}\times\{0\})}\bigg].
\]
\end{lem}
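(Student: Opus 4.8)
The plan is to follow the proof of Lemma~\ref{lem:inter2(a)} verbatim, with the only change that the low-order Sobolev space $H^{-s}$ is replaced by $H^{-2s}$; this forces a different pair of interpolation exponents. Working on the trace hyperplane and writing $\langle\xi\rangle:=\sqrt{1+|\xi|^{2}}$, I would first observe that $0$ is the convex combination $\frac{2s}{1+s}(1-s)+\frac{1-s}{1+s}(-2s)=0$ with $\frac{2s}{1+s}+\frac{1-s}{1+s}=1$. Hence, by Plancherel's theorem together with H\"older's inequality with conjugate exponents $\frac{1+s}{2s}$ and $\frac{1+s}{1-s}$,
\[
\|w\|_{L^{2}(\mathbb{R}^{n}\times\{0\})}=\bigg[\int_{\mathbb{R}^{n}\times\{0\}}\big(\langle\xi\rangle^{2(1-s)}|\hat{w}|^{2}\big)^{\frac{2s}{1+s}}\big(\langle\xi\rangle^{-4s}|\hat{w}|^{2}\big)^{\frac{1-s}{1+s}}\,d\xi\bigg]^{\frac{1}{2}}\le\|w\|_{H^{1-s}(\mathbb{R}^{n}\times\{0\})}^{\frac{2s}{1+s}}\|w\|_{H^{-2s}(\mathbb{R}^{n}\times\{0\})}^{\frac{1-s}{1+s}},
\]
where the first equality holds because the integrand equals $|\hat{w}|^{2}$.

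Next I would apply Young's inequality in the scaled form $a^{\frac{2s}{1+s}}b^{\frac{1-s}{1+s}}\le C\big(\lambda a+\lambda^{-\frac{2s}{1-s}}b\big)$, valid for all $\lambda>0$, and choose $\lambda=\mu^{1-s}$ so that $\lambda^{-\frac{2s}{1-s}}=\mu^{-2s}$. This gives
\[
\|w\|_{L^{2}(\mathbb{R}^{n}\times\{0\})}\le C\big[\mu^{1-s}\|w\|_{H^{1-s}(\mathbb{R}^{n}\times\{0\})}+\mu^{-2s}\|w\|_{H^{-2s}(\mathbb{R}^{n}\times\{0\})}\big].
\]
Finally, I would invoke the trace characterization in Lemma~\ref{lem:traceHs} with $\tilde{s}=1-s$, so that the weight $x_{n+1}^{1-2\tilde{s}}$ becomes $x_{n+1}^{2s-1}$ and the target trace space $H^{\tilde{s}}$ becomes $H^{1-s}$; this yields
\[
\|w\|_{H^{1-s}(\mathbb{R}^{n}\times\{0\})}\le C\big(\|x_{n+1}^{\frac{2s-1}{2}}w\|_{L^{2}(\mathbb{R}_{+}^{n+1})}+\|x_{n+1}^{\frac{2s-1}{2}}\nabla w\|_{L^{2}(\mathbb{R}_{+}^{n+1})}\big),
\]
valid since $w\in H^{1}(\mathbb{R}_{+}^{n+1},x_{n+1}^{2s-1})$. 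Substituting this into the previous display and leaving the $H^{-2s}(\mathbb{R}^{n}\times\{0\})$ term untouched is exactly the claimed inequality.

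There is no real obstacle here; the only point requiring care is the bookkeeping of the interpolation weights. In Lemma~\ref{lem:inter2(a)} the exponents are $s$ and $1-s$ because one interpolates $L^{2}$ between $H^{1-s}$ and $H^{-s}$; here, interpolating $L^{2}$ between $H^{1-s}$ and $H^{-2s}$ forces the weights $\frac{2s}{1+s}$ and $\frac{1-s}{1+s}$, and one must check that the resulting power of $\mu$ on the top-order term is $1-s$ while the power on the $H^{-2s}$ term is $-2s$, which is why $\lambda=\mu^{1-s}$ is the correct choice above. Alternatively, one may run Young's inequality pointwise in $\xi$ before integrating, which makes the dependence on $\mu$ completely transparent and bypasses H\"older's inequality entirely.
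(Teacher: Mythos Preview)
Your proof is correct and follows essentially the same approach as the paper: interpolate $L^{2}$ between $H^{1-s}$ and $H^{-2s}$ with exponents $\tfrac{2s}{1+s}$ and $\tfrac{1-s}{1+s}$, invoke the trace lemma with $\tilde{s}=1-s$, and use Young's inequality with the scaling $\lambda=\mu^{1-s}$. The only cosmetic difference is that the paper applies the trace bound before Young's inequality whereas you do it after, which is immaterial.
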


\begin{proof}
Using Lemma \ref{lem:traceHs} with $\tilde{s}=1-s$, we have 
\begin{align*}
\|w\|_{L^{2}(\mathbb{R}^{n}\times\{0\})} & \le C\|w\|_{H^{1-s}(\mathbb{R}^{n}\times\{0\})}^{\frac{2s}{1+s}}\|w\|_{H^{-2s}(\mathbb{R}^{n}\times\{0\})}^{\frac{1-s}{1+s}}\\
 & \le C\bigg(\|x_{n+1}^{\frac{2s-1}{2}}w\|_{L^{2}(\mathbb{R}_{+}^{n+1})}+\|x_{n+1}^{\frac{2s-1}{2}}\nabla w\|_{L^{2}(\mathbb{R}_{+}^{n+1})}\bigg)^{\frac{2s}{1+s}}\|w\|_{H^{-2s}(\mathbb{R}^{n}\times\{0\})}^{\frac{1-s}{1+s}}\\
 & \le C\bigg[\mu^{1-s}\bigg(\|x_{n+1}^{\frac{2s-1}{2}}w\|_{L^{2}(\mathbb{R}_{+}^{n+1})}+\|x_{n+1}^{\frac{2s-1}{2}}\nabla w\|_{L^{2}(\mathbb{R}_{+}^{n+1})}\bigg)+\mu^{-2s}\|w\|_{H^{-2s}(\mathbb{R}^{n}\times\{0\})}^{\frac{1-s}{1+s}}\bigg],
\end{align*}
which is our desired result. 
\end{proof}

\subsection{Caccioppoli inequality}

We need a generalized the Caccioppoli inequality in \cite[Lemma~4.5]{RS20Calderon}: 
\begin{lem}
\label{lem:Caccio}Let $s\in(0,1)$ and $u\in H^{1}(B_{2r}^{+},x_{n+1}^{1-2s})$
be a solution to 
\[
\bigg[\partial_{n+1}x_{n+1}^{1-2s}\partial_{n+1}+x_{n+1}^{1-2s}P\bigg]\tilde{u}=-x_{n+1}^{1-2s}\sum_{j=1}^{n}\partial_{j}f_{j}\quad\text{in }B_{2r}^{+}.
\]
Then there exists a constant $C=C(n,\lambda)$ such that 
\begin{align*}
 & \|x_{n+1}^{\frac{1-2s}{2}}\nabla\tilde{u}\|_{L^{2}(B_{r}^{+})}^{2}\\
\le & C\bigg[r^{-2}\|x_{n+1}^{\frac{1-2s}{2}}\tilde{u}\|_{L^{2}(B_{2r}^{+})}^{2}+\sum_{j=1}^{n}\|x_{n+1}^{\frac{1-2s}{2}}f_{j}\|_{L^{2}(B_{2r}^{+})}^{2}+\|\lim_{x_{n+1}\rightarrow0}x_{n+1}^{1-2s}\partial_{n+1}\tilde{u}\|_{L^{2}(B_{2r}')}\|u\|_{L^{2}(B_{2r}')}\bigg].
\end{align*}
\end{lem}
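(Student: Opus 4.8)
The plan is to run the classical energy (Caccioppoli) argument adapted to the degenerate weight: test the equation against $\eta^{2}\tilde{u}$ for a radial cut-off $\eta$, integrate by parts, and absorb the terms created by $\nabla\eta$ into the left-hand side using the uniform ellipticity \eqref{eq:unif-ellip}. Note that only the boundedness and ellipticity of $a_{jk}$ (not their Lipschitz regularity) enter, which is why the constant depends on $n$ and $\lambda$ only.

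First I would fix a radial cut-off $\eta\in\mathcal{C}_{0}^{\infty}(B_{2r}^{+})$ with $\eta\equiv1$ on $B_{r}^{+}$, $0\le\eta\le1$, and $|\nabla\eta|\le C/r$ (since $\eta$ is radial one also has $\partial_{n+1}\eta=0$ on $\mathbb{R}^{n}\times\{0\}$, although this is not needed). Multiplying the equation by $\eta^{2}\tilde{u}$ and integrating over $B_{2r}^{+}$, the term $\partial_{n+1}(x_{n+1}^{1-2s}\partial_{n+1}\tilde{u})$ is integrated by parts in $x_{n+1}$, producing the bulk term $-\int_{B_{2r}^{+}} x_{n+1}^{1-2s}\,\partial_{n+1}\tilde{u}\,\partial_{n+1}(\eta^{2}\tilde{u})$ together with the boundary contribution $-\int_{B_{2r}'}\big(\lim_{x_{n+1}\to0}x_{n+1}^{1-2s}\partial_{n+1}\tilde{u}\big)\eta^{2}\tilde{u}\,dx'$ (there is no contribution from $|x|=2r$, where $\eta$ vanishes). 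The term $x_{n+1}^{1-2s}\sum_{j,k}\partial_{j}(a_{jk}\partial_{k}\tilde{u})$ is integrated by parts only in the tangential variables, yielding $-\int_{B_{2r}^{+}} x_{n+1}^{1-2s}\sum_{j,k}a_{jk}\partial_{k}\tilde{u}\,\partial_{j}(\eta^{2}\tilde{u})$ with no boundary term, and likewise $-x_{n+1}^{1-2s}\sum_{j}\partial_{j}f_{j}$ tested against $\eta^{2}\tilde{u}$ gives $\int_{B_{2r}^{+}} x_{n+1}^{1-2s}\sum_{j}f_{j}\,\partial_{j}(\eta^{2}\tilde{u})$. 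Rearranging and expanding $\partial_{j}(\eta^{2}\tilde{u})=\eta^{2}\partial_{j}\tilde{u}+2\eta(\partial_{j}\eta)\tilde{u}$, one arrives at
\[
\int_{B_{2r}^{+}}x_{n+1}^{1-2s}\eta^{2}\Big(|\partial_{n+1}\tilde{u}|^{2}+\sum_{j,k}a_{jk}\partial_{j}\tilde{u}\,\partial_{k}\tilde{u}\Big)\,dx = -\int_{B_{2r}^{+}}x_{n+1}^{1-2s}\tilde{u}\Big(\partial_{n+1}\tilde{u}\,\partial_{n+1}(\eta^{2})+\sum_{j,k}a_{jk}\partial_{k}\tilde{u}\,\partial_{j}(\eta^{2})\Big)\,dx - \int_{B_{2r}'}\big(\lim_{x_{n+1}\to0}x_{n+1}^{1-2s}\partial_{n+1}\tilde{u}\big)\eta^{2}\tilde{u}\,dx' - \int_{B_{2r}^{+}}x_{n+1}^{1-2s}\sum_{j}f_{j}\,\partial_{j}(\eta^{2}\tilde{u})\,dx.
\]

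Next I would estimate the right-hand side. By \eqref{eq:unif-ellip} and $0<\lambda\le1$, the left-hand side is bounded below by $\lambda\int_{B_{2r}^{+}} x_{n+1}^{1-2s}\eta^{2}|\nabla\tilde{u}|^{2}$. Every bulk cross term is handled by Young's inequality: since $|\partial(\eta^{2})|\le C\eta/r$, we have, for small $\delta>0$, $\big|\int x_{n+1}^{1-2s}\tilde{u}\,\partial_{n+1}\tilde{u}\,\partial_{n+1}(\eta^{2})\big|\le\delta\int x_{n+1}^{1-2s}\eta^{2}|\partial_{n+1}\tilde{u}|^{2}+C_{\delta}r^{-2}\int_{B_{2r}^{+}}x_{n+1}^{1-2s}|\tilde{u}|^{2}$, and analogously for the $a_{jk}$ cross term (using $\|a_{jk}\|_{\infty}\le\lambda^{-1}$), while the $f_{j}$ terms give $\big|\int x_{n+1}^{1-2s}f_{j}\partial_{j}(\eta^{2}\tilde{u})\big|\le\delta\int x_{n+1}^{1-2s}\eta^{2}|\nabla\tilde{u}|^{2}+C\int_{B_{2r}^{+}}x_{n+1}^{1-2s}|f_{j}|^{2}+Cr^{-2}\int_{B_{2r}^{+}}x_{n+1}^{1-2s}|\tilde{u}|^{2}$. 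For the boundary term, Cauchy--Schwarz on $\mathbb{R}^{n}\times\{0\}$ together with $0\le\eta\le1$ and $\tilde{u}|_{\mathbb{R}^{n}\times\{0\}}=u$ in the trace sense give
\[
\Big|\int_{B_{2r}'}\big(\lim_{x_{n+1}\to0}x_{n+1}^{1-2s}\partial_{n+1}\tilde{u}\big)\eta^{2}\tilde{u}\,dx'\Big|\le\big\|\lim_{x_{n+1}\to0}x_{n+1}^{1-2s}\partial_{n+1}\tilde{u}\big\|_{L^{2}(B_{2r}')}\|u\|_{L^{2}(B_{2r}')},
\]
which is precisely the last term of the claim. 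Choosing $\delta$ small to absorb the gradient terms into the left-hand side, and then restricting the left-hand integral to $B_{r}^{+}$, where $\eta\equiv1$, yields the stated inequality with $C=C(n,\lambda)$.

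The one delicate point is the integration by parts against the degenerate weight $x_{n+1}^{1-2s}$ near $x_{n+1}=0$, in particular the appearance of the trace term. This is justified by first performing all the integrations by parts on $\{x_{n+1}>\varepsilon\}\cap B_{2r}^{+}$, where they are classical, and then letting $\varepsilon\to0$: the interior integrals converge by dominated convergence using $\tilde{u}\in H^{1}(B_{2r}^{+},x_{n+1}^{1-2s})$ and $f_{j}\in L^{2}(B_{2r}^{+},x_{n+1}^{1-2s})$, while the slice term at $x_{n+1}=\varepsilon$ converges to the asserted boundary term because $\lim_{x_{n+1}\to0}x_{n+1}^{1-2s}\partial_{n+1}\tilde{u}$ exists in $L^{2}(B_{2r}')$ (this is part of the hypothesis, the limit being $c_{n,s}^{-1}(-P)^{s}u$ in the relevant applications) and $\tilde{u}(\bullet,x_{n+1})\to u$ in $L^{2}(B_{2r}')$ as $x_{n+1}\to0$, cf. Lemma~\ref{lem:traceHs}. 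This limiting step is where most of the care is needed; the remaining estimates are routine.
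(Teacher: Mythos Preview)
Your proof is correct and follows essentially the same route as the paper: test the equation against $\eta^{2}\tilde{u}$ with a radial cut-off, integrate by parts to obtain the energy identity with the boundary term $-\int_{B_{2r}'}(\lim_{x_{n+1}\to0}x_{n+1}^{1-2s}\partial_{n+1}\tilde{u})\eta^{2}\tilde{u}$, then use \eqref{eq:unif-ellip} and Young's inequality to absorb the cross terms. If anything, your explicit justification of the integration by parts via the slices $\{x_{n+1}>\varepsilon\}$ is more careful than the paper's version, which performs the computation directly.
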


\begin{proof}
Let $\eta:B_{2r}^{+}\rightarrow\mathbb{R}$ be a smooth, radial cut-off
function such that $0\le\eta\le1$, $\eta=1$ on $B_{r}^{+}$, ${\rm supp}(\eta)\subset B_{2r}^{+}$,
and $|\nabla\eta|\le C/r$ for some constant $C$. Note that 
\begin{align}
 & 2\sum_{j=1}^{n}\int_{\mathbb{R}_{+}^{n+1}}\bigg(x_{n+1}^{\frac{1-2s}{2}}\eta f_{j}\bigg)\bigg(x_{n+1}^{\frac{1-2s}{2}}(\partial_{j}\eta)\tilde{u}\bigg)+\sum_{j=1}^{n}\int_{\mathbb{R}_{+}^{n+1}}\bigg(x_{n+1}^{\frac{1-2s}{2}}\eta f_{j}\bigg)\bigg(x_{n+1}^{\frac{1-2s}{2}}\eta\partial_{j}\tilde{u}\bigg)\nonumber \\
= & -\sum_{j=1}^{n}\int_{\mathbb{R}_{+}^{n+1}}x_{n+1}^{1-2s}(\partial_{j}f_{j})(\eta^{2}\tilde{u})\nonumber \\
= & \int_{\mathbb{R}_{+}^{n+1}}\bigg(\partial_{n+1}x_{n+1}^{1-2s}\partial_{n+1}\tilde{u}+x_{n+1}^{1-2s}\sum_{i,j=1}^{n}\partial_{i}a_{ij}\partial_{j}\tilde{u}\bigg)(\eta^{2}\tilde{u})\nonumber \\
= & -\int_{\mathbb{R}^{n}\times\{0\}}\eta^{2}\tilde{u}\lim_{x_{n+1}\rightarrow0}x_{n+1}^{1-2s}\partial_{n+1}\tilde{u}-\int_{\mathbb{R}_{+}^{n+1}}(x_{n+1}^{1-2s}\partial_{n+1}\tilde{u})\partial_{n+1}(\eta^{2}\tilde{u})\nonumber \\
 & -\int_{\mathbb{R}_{+}^{n+1}}x_{n+1}^{1-2s}\sum_{i,j=1}^{n}a_{ij}\partial_{j}\tilde{u}\partial_{i}(\eta^{2}\tilde{u})\nonumber \\
= & -\int_{\mathbb{R}^{n}\times\{0\}}\eta^{2}\tilde{u}\lim_{x_{n+1}\rightarrow0}x_{n+1}^{1-2s}\partial_{n+1}\tilde{u}-2\int_{\mathbb{R}_{+}^{n+1}}(x_{n+1}^{1-2s}\partial_{n+1}\tilde{u})\eta\partial_{n+1}\eta\tilde{u}\nonumber \\
 & -\int_{\mathbb{R}_{+}^{n+1}}\eta^{2}(x_{n+1}^{1-2s}\partial_{n+1}\tilde{u})\partial_{n+1}\tilde{u}-2\int_{\mathbb{R}_{+}^{n+1}}x_{n+1}^{1-2s}\sum_{i,j=1}^{n}a_{ij}(\eta\partial_{j}\tilde{u})(\partial_{i}\eta\tilde{u})\nonumber \\
 & -\int_{\mathbb{R}_{+}^{n+1}}\eta^{2}x_{n+1}^{1-2s}\bigg(\sum_{i,j=1}^{n}a_{ij}\partial_{j}\tilde{u}\partial_{i}\tilde{u}\bigg)\nonumber \\
= & -\int_{\mathbb{R}^{n}\times\{0\}}\lim_{x_{n+1}\rightarrow0}\eta^{2}\tilde{u}x_{n+1}^{1-2s}\partial_{n+1}\tilde{u}-2\langle\eta\nabla\tilde{u},\tilde{u}\nabla\eta\rangle-\|\eta\nabla\tilde{u}\|^{2}\label{eq:Cacc1}
\end{align}
where $\tilde{A}=\begin{pmatrix}\begin{array}{cc}
A & 0\\
0 & 1
\end{array}\end{pmatrix}$. Here we use the notation 
\[
\langle\bullet,\bullet\rangle=\langle\bullet,\bullet\rangle_{L^{2}(\mathbb{R}_{+}^{n},x_{n+1}^{1-2s}\tilde{A})}\quad\text{and}\quad\|\bullet\|=\|\bullet\|_{L^{2}(\mathbb{R}_{+}^{n},x_{n+1}^{1-2s}\tilde{A})}.
\]
By \eqref{eq:unif-ellip}, indeed 
\[
\|\eta\nabla\tilde{u}\|^{2}\ge\lambda\|\eta x_{n+1}^{\frac{1-2s}{2}}\nabla\tilde{u}\|_{L^{2}(\mathbb{R}_{+}^{n+1})}^{2}\ge\lambda\|x_{n+1}^{\frac{1-2s}{2}}\nabla\tilde{u}\|_{L^{2}(B_{r}^{+})}^{2}.
\]
Also, by \eqref{eq:unif-ellip}, for $\delta>0$, we have 
\begin{align*}
2\langle\eta\nabla\tilde{u},\tilde{u}\nabla\eta\rangle & \le\delta\|\eta\nabla\tilde{u}\|^{2}+\delta^{-1}\|\tilde{u}\nabla\eta\|^{2}\\
 & \le\delta\lambda^{-1}\|\eta x_{n+1}^{\frac{1-2s}{2}}\nabla u\|_{L^{2}(\mathbb{R}_{+}^{n+1})}^{2}+\delta^{-1}\lambda^{-1}\|\nabla\eta x_{n+1}^{\frac{1-2s}{2}}u\|_{L^{2}(\mathbb{R}_{+}^{n+1})}^{2}.
\end{align*}
Moreover, we have 
\[
\bigg|\int_{\mathbb{R}^{n}\times\{0\}}\lim_{x_{n+1}\rightarrow0}\eta^{2}\tilde{u}x_{n+1}^{1-2s}\partial_{n+1}\tilde{u}\bigg|\le\|\lim_{x_{n+1}\rightarrow0}x_{n+1}^{1-2s}\partial_{n+1}\tilde{u}\|_{L^{2}(B_{2r}')}\|\eta^{2}\tilde{u}\|_{L^{2}(B_{2r}')}.
\]
Plug the inequalities above into \eqref{eq:Cacc1}, with small $\delta>0$,
we obtain our desired result. 
\end{proof}

\subsection{$L^{\infty}$-$L^{2}$ type interior inequality}

Following the arguments in \cite[Proposition 3.1]{TX11HarnackFractionalLaplace} (see also \cite[Proposition 2.6]{JLX14Nirenberg} or \cite[Proposition 3.2]{FF14unique}),
we can obtain the following: 
\begin{lem}
\label{lem:Linfty-L2}Let $s\in(0,1)$ and $u\in H^{1}(B_{2r}^{+},x_{n+1}^{1-2s})$
be a solution to 
\begin{align*}
	\bigg[\partial_{n+1}x_{n+1}^{1-2s}\partial_{n+1}+x_{n+1}^{1-2s}P\bigg]\tilde{u} & =0\quad\text{in }\mathbb{R}_{+}^{n+1},\\
	\tilde{u} & =u\quad\text{on }\mathbb{R}^{n}\times\{0\},\\
	\lim_{x_{n+1}\rightarrow0}x_{n+1}^{1-2s}\partial_{n+1}\tilde{u}(x) & =Vu\quad\text{on }\mathbb{R}^{n}\times\{0\},
\end{align*}
with \eqref{eq:unif-ellip} and $|V| \le 1$. Then there exists a constant $C=C(n,\lambda)$
such that 
\[
\|\tilde{u}\|_{L^{\infty}(B_{1/2}^{+})}\le C\bigg[\|x_{n+1}^{\frac{1-2s}{2}}\tilde{u}\|_{L^{2}(B_{1}^{+})}+\|x_{n+1}^{\frac{1-2s}{2}}\nabla\tilde{u}\|_{L^{2}(B_{1}^{+})}\bigg].
\]
\end{lem}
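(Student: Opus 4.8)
The plan is to run a De Giorgi--Nash--Moser iteration adapted to the Muckenhoupt $A_{2}$-weight $x_{n+1}^{1-2s}$, in the spirit of \cite{TX11,JLX14,FF14}. First I would reflect $\tilde u$ evenly across $\{x_{n+1}=0\}$: since the Neumann-type condition $\lim_{x_{n+1}\to0}x_{n+1}^{1-2s}\partial_{n+1}\tilde u=Vu$ produces a jump of $2Vu$ in the conormal derivative across the hyperplane, the reflected function is a weak solution on the full ball $B_{1}\subset\mathbb{R}^{n+1}$ of a uniformly elliptic equation with respect to the weight $|x_{n+1}|^{1-2s}$ (which is $A_{2}$ because $1-2s\in(-1,1)$), whose weak formulation acquires the extra boundary term $2\int_{\{x_{n+1}=0\}}Vu\,\varphi\,dx'$. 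Writing $d\mu:=x_{n+1}^{1-2s}\,dx$, the weighted Sobolev theory for $A_{2}$ weights then furnishes an exponent $\chi=\chi(n,s)>1$ and a constant $C$ with $\|\psi\|_{L^{2\chi}(B_{1}^{+},d\mu)}\le C\|\nabla\psi\|_{L^{2}(B_{1}^{+},d\mu)}$ for every $\psi$ compactly supported in $B_{1}^{+}$; this is the engine of the iteration.

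The next step is a Caccioppoli inequality for powers of the solution. For $\beta\ge1$ and a radial cutoff $\zeta$, one tests the equation against $\zeta^{2}|\tilde u_{M}|^{2\beta-2}\tilde u$, where $\tilde u_{M}={\rm sign}(\tilde u)\min(|\tilde u|,M)$ is the standard truncation making the test function admissible; integrating by parts and passing to the limit $M\to\infty$ gives, with $v:=|\tilde u|^{\beta}$,
\[
\int\zeta^{2}|\nabla v|^{2}\,d\mu\;\le\;C\beta^{2}\Big(\int|\nabla\zeta|^{2}v^{2}\,d\mu+\Big|\int_{\{x_{n+1}=0\}}V\zeta^{2}v^{2}\,dx'\Big|\Big).
\]
The boundary term is the only ingredient absent from the purely interior estimate, and this is where I expect the main work. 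Since $|V|\le1$ it is dominated by $\|\zeta v\|_{L^{2}(\mathbb{R}^{n}\times\{0\})}^{2}$, and an elementary trace inequality (Cauchy--Schwarz in $x_{n+1}$ followed by Plancherel in $x'$, using the compact support of $\zeta v$; compare Lemmas~\ref{lem:traceHs} and~\ref{lem:inter2(a)}) yields, for every $\delta>0$,
\[
\|\zeta v\|_{L^{2}(\mathbb{R}^{n}\times\{0\})}^{2}\;\le\;\delta\,\|x_{n+1}^{\frac{1-2s}{2}}\nabla(\zeta v)\|_{L^{2}(\mathbb{R}_{+}^{n+1})}^{2}+C_{\delta}\,\|x_{n+1}^{\frac{1-2s}{2}}\zeta v\|_{L^{2}(\mathbb{R}_{+}^{n+1})}^{2},
\]
with $C_{\delta}$ a negative power of $\delta$. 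Taking $\delta\sim\beta^{-2}$, so that $C_{\delta}$ stays polynomial in $\beta$, and absorbing the $\zeta^{2}|\nabla v|^{2}$ part into the left-hand side, one arrives at $\int\zeta^{2}|\nabla v|^{2}\,d\mu\le C\beta^{p}\int(|\nabla\zeta|^{2}+\zeta^{2})v^{2}\,d\mu$ for a fixed exponent $p=p(s)$.

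From here the iteration is routine. Coupling the last estimate with the weighted Sobolev inequality applied to $\zeta v$ gives, for $\tfrac12\le\rho<\rho'\le1$,
\[
\Big(\int_{B_{\rho}^{+}}|\tilde u|^{2\beta\chi}\,d\mu\Big)^{1/\chi}\;\le\;\frac{C\beta^{p}}{(\rho'-\rho)^{2}}\int_{B_{\rho'}^{+}}|\tilde u|^{2\beta}\,d\mu,
\]
that is, $\|\tilde u\|_{L^{2\beta\chi}(B_{\rho}^{+},d\mu)}\le\big(C\beta^{p}(\rho'-\rho)^{-2}\big)^{1/2\beta}\|\tilde u\|_{L^{2\beta}(B_{\rho'}^{+},d\mu)}$. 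Iterating over $\beta=\chi^{k}$ and $\rho_{k}=\tfrac12+2^{-k-1}$, the product of the constants converges since $\sum_{k}k\chi^{-k}<\infty$, and letting $k\to\infty$ yields $\|\tilde u\|_{L^{\infty}(B_{1/2}^{+})}\le C\|x_{n+1}^{\frac{1-2s}{2}}\tilde u\|_{L^{2}(B_{1}^{+})}$, which in particular implies the stated bound. For $s=\tfrac12$ the weight is trivial and this reduces to the classical interior $L^{\infty}$-$L^{2}$ estimate. The assumption $|V|\le1$ (as opposed to merely $V\in L^{\infty}$, or $V$ in some Lebesgue class) is used only in the absorption step above; everything else is the standard $A_{2}$-weighted regularity machinery, and the only point requiring genuine care is the admissibility of the test functions, which the truncation argument takes care of exactly as in \cite{TX11,JLX14}.
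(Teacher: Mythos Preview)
Your proposal is correct and is exactly the approach the paper has in mind: the paper does not give an explicit argument for this lemma but simply refers to \cite[Proposition~3.1]{TX11} (and \cite{JLX14,FF14}), which are precisely the weighted Moser iteration arguments you outline. If anything, your sketch is more detailed than the paper's treatment, and the handling of the boundary term via the trace/interpolation inequality with $\delta\sim\beta^{-2}$ (so that $C_{\delta}$ grows only polynomially in $\beta$) is the correct mechanism.
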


\section*{Acknowledgments}

I would like to thank Prof. Jenn-Nan Wang for suggesting the problem and for many helpful discussions. This research is partially supported by MOST 105-2115-M-002-014-MY3, MOST 108-2115-M-002-002-MY3, and MOST 109-2115-M-002-001-MY3. 

\bibliographystyle{alpha}
\bibliography{ref}

\end{document}